\newtheorem{theorem}{Theorem}[section]
\newtheorem{corollary}[theorem]{Corollary}
\newtheorem{lemma}[theorem]{Lemma}
\newtheorem{proposition}[theorem]{Proposition}
\theoremstyle{definition}
\newtheorem{example}[theorem]{Example}
\theoremstyle{remark}
\newtheorem{remark}[theorem]{Remark}
\numberwithin{equation}{section}
\newcommand{\supp}{\operatorname{supp}}
\begin{document}
\title[Scattering for Miura potentials]{Inverse scattering on the line for Schr\"{o}dinger operators with Miura
potentials, II. \\ Different Riccati representatives}
\author[Hryniv]{Rostyslav O. Hryniv}
\address[Hryniv]{Institute for Applied Problems of Mechanics and Mathematics, 3b
Naukova st., 79601 Lviv, Ukraine \and Department of Mechanics and Mathematics,
Lviv National University, 79602 Lviv, Ukraine \and Institute of Mathematics, the University of Rzesz\'{o}w, 16\,A Rejtana st., 35-959 Rzesz\'{o}w, Poland}
\email{rhryniv@iapmm.lviv.ua}
\author[Mykytyuk]{Yaroslav V. Mykytyuk}
\address[Mykytyuk]{Department of Mechanics and Mathematics, Lviv National University, 79602 Lviv, Ukraine}
\email{yamykytyuk@yahoo.com}
\author[Perry]{Peter A. Perry}
\address[Perry]{ Department of Mathematics, University of Kentucky, Lexington,
Kentucky, 40506-0027, U.S.A.}
\email{perry@ms.uky.edu}

\subjclass[2000]{Primary: 34L25, Secondary: 34L40, 47L10, 81U40}%
\keywords{Schr\"odinger operators, inverse scattering, Miura potentials, distributional potentials}%

\date{02 October 2009}

\begin{abstract}
This is the second in a series of papers on scattering theory for one-dimensional Schr\"{o}dinger operators with Miura potentials admitting
a Riccati representation of the form $q=u^{\prime}+u^{2}$ for some $u\in L^{2}(\mathbb{R})$. We consider potentials for which there exist `left' and
`right' Riccati representatives with prescribed integrability on half-lines.
This class includes all Faddeev--Marchenko potentials in $L^{1}\bigl({\mathbb{R}},(1+|x|)dx\bigr)$ generating positive Schr\"{o}dinger operators as well as many distributional potentials with Dirac delta-functions and Coulomb-like singularities. We completely describe the corresponding set of reflection coefficients~$r$ and justify the algorithm reconstructing~$q$ from~$r$.
\end{abstract}

\maketitle

\tableofcontents

\section{Introduction}\label{sec.intro}

This is the second in a series of
papers on inverse scattering for the Schr\"{o}dinger operators%
\[
S:=-\frac{d^{2}}{dx^{2}}+q(x)
\]
on the line with highly singular potentials $q$. Our eventual goal is to study
the~KdV equation with rough initial data using the method of inverse
scattering. In this paper, we will define a class of highly singular
potentials for which the direct scattering map is well-defined and invertible,
and obtain a complete characterization of the reflection coefficients that
arise. An important aspect of our work is the connection between inverse
scattering for the Schr\"{o}dinger equation and inverse scattering for the
ZS-AKNS system, obtained through the Riccati representation for singular
potentials that we explain below.

In a separate paper~\cite{HMP:2010}, we will use these results together with the Riemann--Hilbert formulation of the inverse scattering problem for the ZS-AKNS\ system (see
especially Zhou~\cite{Zhou:1998}) to obtain mapping properties of the
scattering transform between weighted Sobolev spaces of potentials and
corresponding weighted Sobolev spaces of reflection coefficients, in the
spirit of~\cite{Zhou:1998}. We will then use these fine mapping properties of
the scattering transform to study the KdV flow.

We consider real-valued potentials $q\in H^{-1}(\mathbb{R})$ having the property that the quadratic form%
\begin{equation}\label{eq.h}
    \mathfrak{s}(\varphi)
        = \int\left\vert \varphi'(x)\right\vert^{2}\,dx
        + \bigl\langle q,\left\vert \varphi\right\vert^{2}\bigr\rangle
\end{equation}
defined on $C_{0}^{\infty}(\mathbb{R})$ is nonnegative and obeying some
additional restrictions imposed in order to construct a meaningful scattering
theory. Here $\left\langle \,\cdot\,,\,\cdot\,\right\rangle ~$denotes the dual pairing of~$H^{-1}(\mathbb{R)}$ and~$H^{1}(\mathbb{R)}$.

As shown in~\cite{KPST:2005}, any potential of the above type admits a Riccati
representation of the form
\begin{equation}
q=u^{\prime}+u^{2}\label{eq:Miura}%
\end{equation}
for a real-valued function $u\in L^{2}(\mathbb{R})$. Such a potential~$q$ is
called a \emph{Miura potential}, and the nonlinear map defined by~\eqref{eq:Miura} is called the \emph{Miura map}~\cite{Miura:1968}. We refer
the reader to Appendix C of \cite{KPST:2005} for a discussion of related
literature on the Miura map and properties of positive solutions to the
Schr\"{o}dinger equation.

The Riccati representation of a Miura
potential is generally not unique. Any Riccati representative $u$ gives rise
to a strictly positive distributional solution~$y$ of the zero-energy
Schr\"{o}dinger equation $-y^{\prime\prime}+ qy =0$ via
\[
    y(x)=\exp\left(  \int_{0}^{x}u(s)~ds\right)
\]
and, conversely, any positive solution $y\in H_{\mathrm{loc}}^{1}(\mathbb{R})$
gives rise to a Riccati representative $u(x)=y^{\prime}(x)/y(x)$. Thus, the
set of Riccati representatives for a given distribution potential $q$ is
parameterized by normalized positive solutions to the zero-energy
Schr\"{o}dinger equation. The set of such solutions $y$, normalized so that
$y(0)=1$, is denoted $\operatorname*{Pos}(q)$. There are extremal solutions
$y_{\pm}$ in $\operatorname*{Pos}(q)$ with the properties that
\[
    \int_{0}^{\infty}\frac{ds}{y^{2}_{+}(s)}
        = \int_{-\infty}^{0}\frac{ds}{y^2_{-}(s)}
        = +\infty,
\]
and any $y\in\operatorname*{Pos}(q)$ takes the form~$y=\theta y_{+}%
+(1-\theta)y_{-}$ for some~$\theta\in[0,1]$. The corresponding extremal
Riccati representatives $u_{\pm}=\left( \log y_{\pm}\right)  ^{\prime}$ belong
to $L_{\mathrm{loc}}^{2}(\mathbb{R})$; we will assume in addition that
$u_{\pm}$ are in~$L^{2}(\mathbb{R})$ and that $u_{+}$ is integrable at
$+\infty$ and $u_{-}$ is integrable at $-\infty$. The set of all potentials
with the above properties is denoted by $\mathcal{Q}$, i.e.,
\[
    \mathcal{Q}
        := \{q =\overline{q} \in H^{-1}(\mathbb{R})\, :\,
        \exists u_{\pm}\in L^{2}(\mathbb{R})\cap L^{1}(\mathbb{R}^{\pm})
        \text{ s.t. }  q = u_{+}^{\prime}+ u_{+}^{2} = u_{-}^{\prime}+ u_{-}^{2}\}.
\]

The set~$\mathcal{Q}$ contains all real-valued potentials of
Faddeev--Marchenko class (i.e., potentials belonging to~$L^{1}(\mathbb{R}%
,(1+|x|)dx)$) generating non-negative Schr\"{o}\-din\-ger operators as well as
many singular potentials (e.g., with Dirac delta-functions and Coulomb-like
singularities), see Section~\ref{sec.riccati}. To every $q\in\mathcal{Q}$ there corresponds a well defined non-negative Schr\"{o}dinger operator, and our main aim is to show that the classic scattering theory known for Faddeev--Marchenko potentials can be extended to the whole set~$\mathcal{Q}$.

Within $\mathcal{Q}$ there is a dichotomy between \textquotedblleft
generic\textquotedblright\ potentials for which $y_{+}\neq y_{-}$ and
\textquotedblleft exceptional\textquotedblright\ potentials for which
$y_{+}=y_{-}$. \ This corresponds to the well-known dichotomy for regular
potentials (i.e., measurable real-valued functions~$q$ with
     $\int \bigl( 1+x^{2}\bigr) |q(x)|\,dx<\infty$)
between those $q$ for which the
reflection coefficient~$r$ satisfies $r(0)=-1$ (the \textquotedblleft
generic\textquotedblright\ case) and those for which $\left\vert
r(0)\right\vert <1$ (the ``exceptional'' case): see for example \cite{DT:1979}, section 2.3, Theorem 1 (pp.~146--147) and Remark 9 (pp.~152--153). Note that this dichotomy is invariant under the KdV flow on $\mathcal{Q} \cap S(\mathbb{R})$, since, under the KdV flow $t \mapsto q(\cdot,t)$, the reflection coefficient is given by $r(k,t)=\exp(8ik^3t)r_0(k)$.

If we write $v(x):=u_{-}(x)-u_{+}(x)$, it is easy to see that $v$ is actually a
continuous function, that $v$ is either identically zero or everywhere
nonvanishing, and that $v(0)>0$ for the \textquotedblleft
generic\textquotedblright\ potentials, while $v(0)=0$ for the exceptional ones
(see Section \ref{sec.riccati} and equation (\ref{eq.vx})). For this reason we
will denote the subset of \textquotedblleft generic\textquotedblright\ Miura
potentials by $\mathcal{Q}_{>0}$, and the subset of \textquotedblleft
exceptional\textquotedblright\ potentials by~$\mathcal{Q}_{0}$. The
corresponding sets of reflection coefficients (defined more precisely below)
will be denoted $\mathcal{R}_{>0}$ and~$\mathcal{R}_{0}$. A crucial
observation is that a potential $q\in\mathcal{Q}$ is uniquely characterized by
the data%
\begin{equation}
    \bigl(  \left.  u_{+}\right\vert _{(0,\infty)},\left.  u_{-}
        \right\vert_{(-\infty,0)},v(0) \bigr), \label{eq.riccati}%
\end{equation}
see Section~\ref{sec.riccati} and Lemmas \ref{lemma.r1} and \ref{lemma.r2}. If
we set $X_{0}^{+}=L^{1}(0,\infty)\cap L^{2}(0,\infty)$ and $X_{0}^{-}%
=L^{1}(-\infty,0)\cap L^{2}(-\infty,0)$, we can then topologize $\mathcal{Q}$
as $X_{0}^{+}\times X_{0}^{-}\times[0,\infty)$.

In the first paper~\cite{FHMP:2009} of this series (referred to as Paper~I in what follows), we studied the case of \textquotedblleft
exceptional\textquotedblright\ potentials and constructed the scattering and
inverse scattering maps as continuous bijections between $\mathcal{Q}_{0}$ and
$\mathcal{R}_{0}$. The goal of this paper is to study the generic case and
construct the direct and inverse scattering maps as continuous bijections
between $\mathcal{Q}_{>0}$ and $\mathcal{R}_{>0}$. The primary technical
challenge is to give a workable characterization of the low-energy asymptotic
behavior of the reflection and transmission coefficients for the class of
singular potentials under study (see Section~\ref{subsec.rep} below) and show
that this characterization is sufficient to prove that the standard formulas
from the Gelfand--Levitan--Marchenko theory carry through and give a correct
reconstruction (see Section~\ref{subsec.consistent} below). As a by-product,
we show that the reflection coefficients corresponding to $q\in\mathcal{Q}$
are continuous on the whole line. For real-valued potentials in~$L^{1}\bigl(\mathbb{R},(1+|x|)dx\bigr)$, Marchenko~\cite[Ch.~3.5]{Marchenko:1977} established this property generically and conjectured it for the exceptional case; then Deift and Trubowitz~\cite{DT:1979} proved the continuity of~$r$ for a
subset of potentials $q\in L^{1}(\mathbb{R},(1+x^{2})dx)$ (see also~\cite[Problem~3.5.3]{Marchenko:1977}), and finally Guseinov~\cite{Guseinov:1985} and independently Klaus~\cite{Klaus:1988} justified the above conjecture for exceptional Faddeev--Marchenko potentials.

In Paper~I, we used the correspondence between the Schr\"{o}dinger and
ZS-AKNS\ equations together with well-known inverse theory for the ZS-AKNS\ system
to obtain properties of Jost solutions, characterization of the transmission
and reflection coefficients, and a reconstruction algorithm. In the case
considered there, a single Riccati representative uniquely parameterizes the
potential and it suffices to study scattering for a single ZS-AKNS\ system.

In the present paper, we use the extremal Riccati representatives $u_{+}$ and
$u_{-}$ for a potential $q\in\mathcal{Q}$ to construct \textquotedblleft
right\textquotedblright\ and \textquotedblleft left\textquotedblright\ ZS-AKNS\ systems which yield the right and left Jost solutions, and
\textquotedblleft right\textquotedblright\ and \textquotedblleft
left\textquotedblright\ reconstruction formulas. Namely, for~$k \in\mathbb{C}$
with non-negative imaginary part, one can construct Jost solutions~$f_{\pm}(x,k)$ of the Schr\"{o}dinger equation
\[
    -y^{\prime\prime}+ q y = k^{2} y
\]
with%
\[
    \lim_{x\rightarrow\pm\infty}\left\vert f_{\pm}(x,k)-e^{\pm ikx}\right\vert =0
\]
(see \S \ref{subsec.jost} for detailed discussion). If $k$ is real and
nonzero, then the functions
\[
    f_{+}(x,k),\ f_{+}(x,-k)
\]
and
\[
    f_{-}(x,k),\ f_{-}(x,-k)
\]
are linearly independent solutions of the above Schr\"{o}dinger equation, so
that we may define coefficients $a(k)$ and $b(k)$ by the relation%
\begin{equation}\label{eq.ab+}%
    f_{+}(x,k)
        = a(k)f_{-}(x,-k)+b(k)f_{-}(x,k),
\end{equation}
or, equivalently, by
\begin{equation}\label{eq.ab-}%
    f_{-}(x,k)
        = a(k)f_{+}(x,-k)-b(-k)f_{+}(x,k).
\end{equation}
The associated reflection and transmission coefficients are, as usual, given
by%
\begin{align}
    r_{+}(k)  &  =-\frac{b(-k)}{a(k)},\label{eq.r+}\\
    r_{-}(k)  &  =\frac{b(k)}{a(k)},\label{eq.r-}\\
    t(k)  &  =\frac{1}{a(k)}. \label{eq.t}%
\end{align}

To characterize the reflection coefficients corresponding to potentials
in~$\mathcal{Q}$, we introduce the space $X:=L^{1}(\mathbb{R})\cap
L^{2}(\mathbb{R})$ with the norm $\Vert f\Vert_{X}=\Vert f\Vert_{L^{1}}+\Vert
f\Vert_{L^{2}}$ and denote by $\widehat{X}$ the set of Fourier
transforms~$\widehat{f}$ of functions in~$X$, with $\Vert\widehat{f}%
\Vert_{\widehat{X}}:=\Vert f\Vert_{X}$. Clearly, $\widehat{X}$ consists of
continuous functions. Introduce now the set
\[
    \mathcal{R}
        :=\{r\in\widehat{X}\,:\,r(-k)=\overline{r(k)},\ |r(k)|<1
        \text{ for } k\neq0\}
\]
and its subsets
\[
    \mathcal{R}_{0}:=\{r\in\mathcal{R}\,:\,|r(0)|<1\}
\]
and
\[
    \mathcal{R}_{>0}
        :=\{r\in\mathcal{R}\,:\,r(0)=-1,\ \widetilde{r}(k):=(1-|r(k)|^{2})/{k^{2}}\in\widehat{X}, \
        \widetilde r(0) >0\}.
\]
The topology in~$\mathcal{R}$ and $\mathcal{R}_0$ is inherited from that of~$\widehat X$.
The set~$\mathcal{R}_{>0}$ becomes a metric space with the metric defined by the distance
\[
    d(r_{1},r_{2}) := \left\Vert r_{1}-r_{2}\right\Vert _{\widehat{X}}
                    +\left\Vert \widetilde{r}_{1}-\widetilde{r}_{2}\right\Vert _{\widehat{X}}.
\]
Note that $\mathcal{R}_{>0}\cup\mathcal{R}_{0}$ is a proper subset of $\mathcal{R}$ and that $\mathcal{R}_{0}$ is open in~$\mathcal{R}$ while $\mathcal{R}_{>0}$ is neither open nor closed in~$\mathcal{R}$.

We denote by~$\mathcal{S}_+$ and~$\mathcal{S}_-$ the direct scattering maps that send a potential~$q$ in $\mathcal{Q}$ into the reflection coefficients $r_+$ and $r_-$, respectively. For the \textquotedblleft
exceptional\textquotedblright\ case studied in Paper~I, it was proved that $\mathcal{S}_\pm$ are homeomorphisms between $\mathcal{Q}_{0}$ and $\mathcal{R}_{0}$. Here we shall study the \textquotedblleft generic\textquotedblright%
\ case, and our main result is:

\begin{theorem}
\label{thm.main} The direct scattering maps $\mathcal{S}_\pm$ are homeomorphisms between~$\mathcal{Q}_{>0}$ and $\mathcal{R}_{>0}$.
\end{theorem}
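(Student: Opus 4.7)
I would prove Theorem~\ref{thm.main} in three stages: (i) show that $\mathcal{S}_\pm$ carries $\mathcal{Q}_{>0}$ into $\mathcal{R}_{>0}$; (ii) construct its inverse via a Gel'fand--Levitan--Marchenko (GLM) reconstruction landing in $\mathcal{Q}_{>0}$; (iii) verify continuity in both directions. Throughout I would exploit the two extremal Riccati representatives $u_+$ and $u_-$ and the associated ``right'' and ``left'' ZS-AKNS systems, which reduce the singular Schr\"odinger scattering problem to two regular ZS-AKNS problems for which Paper~I and standard AKNS inverse theory supply the bulk of the estimates.

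For the direct map I would construct the Jost solutions $f_\pm(x,k)$ of $-y''+qy=k^2y$ from the ZS-AKNS Jost solutions attached to $u_+$ on $(0,\infty)$ and $u_-$ on $(-\infty,0)$, using the Miura factorisation $-\partial_x^2+q=(\partial_x+u_\pm)(-\partial_x+u_\pm)$ valid distributionally. The ZS-AKNS theory then yields $r_\pm\in\widehat X$, the symmetry $r_\pm(-k)=\overline{r_\pm(k)}$, and $|r_\pm(k)|<1$ for $k\ne 0$. The genuinely new ingredient is the low-energy analysis. Using the Wronskian identity $W(f_+,f_-)(k)=-2ik\,a(k)$ together with the identification of $f_\pm(x,0)$ as definite combinations of the extremal positive zero-energy solutions $y_\pm$ (normalised so that $y_\pm(0)=1$, $y_\pm'(0)=u_\pm(0)$, hence $W(y_+,y_-)=v(0)>0$ in the generic case), one computes $W(f_+,f_-)\big|_{k=0}\ne 0$ and concludes that $a(k)$ has a simple pole at $k=0$ with residue proportional to $v(0)$. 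This forces $r_\pm(0)=-1$ and $\widetilde r(0)>0$. The main technical obstacle is the \emph{global} assertion $\widetilde r\in\widehat X$: I would express $1-|r_\pm(k)|^2=|t(k)|^2$ as a bilinear Fourier-analytic function of the ZS-AKNS scattering data of $u_\pm$ and of $v$, divide by $k^2$, and then invoke the convolution/algebra structure of $\widehat X$ to conclude membership.

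For the inverse, given $r\in\mathcal{R}_{>0}$ I set $F(x):=(2\pi)^{-1}\int_{\mathbb R} r(k)e^{ikx}\,dk\in X$ and consider the right Marchenko equation
\[
    K(x,y)+F(x+y)+\int_x^\infty K(x,z)\,F(y+z)\,dz = 0,\qquad y>x,
\]
in $L^2(x,\infty)$. The supplementary hypothesis $\widetilde r\in\widehat X$ with $\widetilde r(0)>0$ is precisely what is needed for Fredholm solvability at $k=0$, where the usual compactness argument degenerates because $r(0)=-1$. From the solution $K$ one reconstructs the right Riccati representative $u_+\in X_0^+$ through the ZS-AKNS inverse scattering applied to the factorised problem; the analogous left Marchenko equation built from $r_-$ (related to $r_+$ via the S-matrix symmetry encoded in $\mathcal{R}_{>0}$) yields $u_-\in X_0^-$, while $v(0)$ is read off from $\widetilde r(0)$. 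The resulting triple parametrises a unique $q\in\mathcal{Q}_{>0}$, and matching Jost solutions shows $\mathcal{S}_\pm(q)=r$; injectivity follows because the GLM procedure is a left inverse of $\mathcal{S}_\pm$.

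Continuity in both directions would be proved by tracking $X$- and $\widehat X$-norms through the integral equations: $r_\pm$ depends continuously on $(u_+|_{(0,\infty)},u_-|_{(-\infty,0)},v(0))$ because the ZS-AKNS scattering data do and because the low-energy correction depends continuously on $v(0)$, while the Marchenko operator depends continuously on $r$ in the metric $d$ of $\mathcal{R}_{>0}$, which has been tailored precisely to control both $r$ and $\widetilde r$. The most delicate issue throughout is this simultaneous control of $r$ and $\widetilde r$ near $k=0$: a small perturbation of $q$ can cause a large perturbation of $\widetilde r(0)$ unless one exploits in an essential way the explicit linear dependence of the leading low-energy coefficient on $v(0)$ afforded by the Riccati decomposition.
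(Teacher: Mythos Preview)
Your overall architecture matches the paper's: direct map via the two ZS-AKNS systems attached to $u_\pm$, inverse via a pair of GLM equations, continuity tracked through $X$ and $\widehat X$. Two substantive points deserve correction, however.

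First, you misidentify the role of the hypothesis $\widetilde r\in\widehat X$. The solvability of the Marchenko equation in $X_0^+$ does \emph{not} require this condition; it follows from $|r(k)|<1$ a.e., compactness of the convolution operator $T_F(x)$, and the Fredholm alternative (this is Proposition~\ref{pro.Gamma} in the paper). Where $\widetilde r\in\widehat X$ is genuinely needed is earlier, to make sense of the reconstruction of $t$ from $r$: one writes $(1-|r(k)|^2)(k^2+1)/k^2=1-r(k)r(-k)+\widetilde r(k)$, which lies in $1\dotplus\widehat X$ and is invertible there, so that the Wiener--Levi lemma and the Cauchy projector $\mathcal C_+$ yield $t$ and hence the involution $r\mapsto r^\#$. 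Without $r^\#$ you cannot even form the left Marchenko equation. Relatedly, $\widetilde r(0)$ determines only the combination $\theta=v(0)f_+(0,0)f_-(0,0)$, not $v(0)$ itself; in the paper $v(0)$ is obtained as $(w^\#-w)(0)$, using the continuity of $w^\#-w$ established in Proposition~\ref{pro.Gamma}.

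Second, and more seriously, you gloss over the consistency step. From the two GLM equations you obtain candidate Riccati representatives $w$ and $w^\#$, but a priori these define \emph{different} distributions $q=w'+w^2$ and $q^\#=(w^\#)'+(w^\#)^2$. The statement ``matching Jost solutions shows $\mathcal S_\pm(q)=r$'' is exactly the hard part. One must prove that the functions $f,f^\#$ built from the kernels satisfy
\[
f^\#(x,k)=a(k)f(x,-k)-b(-k)f(x,k),\qquad f(x,k)=a(k)f^\#(x,-k)+b(k)f^\#(x,k),
\]
with the \emph{given} $a,b$; this is a Liouville-type argument (Lemma~\ref{lemma.fsharp}) using analytic continuation to $\mathbb C^+$, the behaviour of $a$ near $k=0$, and oddness/boundedness to force an entire function to vanish. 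Only then does one conclude $q=q^\#$ (Lemma~\ref{lemma.qsharp}) and that the reconstructed potential has reflection coefficient~$r$. Your sketch does not indicate how this step would be carried out.

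A smaller point: for the membership $\widetilde r\in\widehat X$ on the direct side, the paper's route is cleaner than the bilinear expression you propose. One renormalises $\widetilde a(k):=\frac{k}{k+i}a(k)\in 1\dotplus\widehat X$, shows it is invertible there, and observes that $\widetilde r(k)=(k^2+1)^{-1}|\widetilde a(k)|^{-2}$.
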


We prove Theorem \ref{thm.main} in two steps. First, we construct  the direct
scattering maps $\mathcal{S}_\pm$ and study their properties (see Theorem~\ref{thm.direct}). We then construct
the inverse maps $\mathcal{S}_\pm^{-1}$ in Theorem \ref{thm.inverse} and give an
explicit reconstruction algorithm.

In Paper~III of this series~\cite{HMP:2009}, we will show how to add bound states to potentials $q \in \mathcal{Q}$ and thereby complete our analysis of the direct
and inverse scattering maps for singular potentials. In a separate paper~\cite{HMP:2010}, we study the direct and inverse scattering maps on subspaces
of $\mathcal{Q}$ with extremal Riccati representatives belonging to weighted Sobolev spaces, in the spirit of~Zhou~\cite{Zhou:1998}. This will allow us to give a complete characterization of potentials with reflection coefficients belonging to weighted Sobolev spaces $H^{j,k}(\mathbb{R})$, i.e., reflection coefficients with~$j$ distributional derivatives in~$L^2(\mathbb{R})$ and with~$s^k r(s) \in L^2(\mathbb{R})$. These spaces play an important role in the study of the~KdV equation and other equations in the~KdV hierarchy since, for example, the~KdV flow preserves reflection coefficients in the space $H^{1,2}(\mathbb{R})$. In~\cite{HMP:2010} we will also construct solutions of the~KdV equation with less regular initial data by the inverse scattering method.

The contents of this paper are as follows. In \S \ref{sec.riccati}, we review
the basic facts about Riccati representatives and the Riccati representation
(\ref{eq.riccati}) and give several examples of potentials in~$\mathcal{Q}$.
In \S \ref{sec.direct} we construct the Jost solutions, obtain representation
formulas for the transmission and reflection coefficients, and show that all
of the scattering data are determined by either the left or the right
reflection coefficient alone. In \S \ref{sec.inverse}, we give reconstruction
formulas for the Riccati representation (\ref{eq.riccati}) given a single
reflection coefficient, and prove consistency of the reconstruction formulas.

\emph{Notation.} In what follows, $\mathbb{R}^{+}=(0,\infty)$ and
$\mathbb{R}^{-}=(-\infty,0)$. Given a Miura potential $q\in H_{\mathrm{loc}%
}^{-1}(\mathbb{R})$, we shall always denote by $u_{\pm}$ the Riccati
representatives corresponding to extremal solutions~$y_{\pm}$ of the
equation~$-y^{\prime\prime}+qy=0$ and by $w_{\pm}$ the restrictions
of~$u_{\pm}$ to $\mathbb{R}^{\pm}$.

Next, $M_{2}(\mathbb{C})$ will stand for the linear space of $2\times2$
matrices with complex entries and $\left\vert\,\cdot\,\right\vert $ will denote
the Euclidean norm for vectors and matrices, while $\left\Vert\,\cdot\,\right\Vert$ will be used for norms on various function spaces.

We denote by $X$ the space $L^{1}(\mathbb{R})\cap L^{2}(\mathbb{R})$, by
$X^{+}$ the space $L^{2}(\mathbb{R})\cap L^{1}(\mathbb{R}^{+})$, and by $X^{-}$ the
space $L^{2}(\mathbb{R})\cap L^{1}(\mathbb{R}^{-})$. Thus $X^{\pm}$ are spaces of
functions on the real line with prescribed integrability at $\pm\infty$. For $c\in\mathbb{R}$, we also set
\begin{align*}
    X_{c}^{+}  &  :=L^{1}(c,\infty)\cap L^{2}(c,\infty),\\
    X_{c}^{-}  &  :=L^{1}(-\infty,c)\cap L^{2}(-\infty,c).
\end{align*}

It will be convenient to use a non-standard normalization of the Fourier transform~$\widehat f=\mathcal{F}f$
of a function $f\in L^1(\mathbb{R})$, viz.
\begin{equation}\label{eq.Fourier}
    \widehat{f}(\xi) = (\mathcal{F}f)(\xi):=\int_{-\infty}^\infty \exp(2i\xi x) f(x) \,dx.
\end{equation}
We will denote by $\widehat{X}$ the Banach algebra of functions whose Fourier transforms lie in $X$; $\widehat{X}$ is a subalgebra of the classical Wiener algebra. The unital extension of~$\widehat{X}$
is a Banach algebra consisting of all functions of the form~$c+\widehat{f}$
with $c\in\mathbb{C}$ and $f\in X$ and is denoted by~$1\dotplus\widehat{X}$.
An element $a$ of $1\dotplus\widehat{X}$ is invertible there if and only if
$a$ does not vanish on $\mathbb{R}$ and does not tend to zero at infinity
(cf.~Appendix of Paper~I).

Further, for every $r\in \mathcal{R}_{>0}$, we denote by $\widetilde{r}$ the element of~$\widehat X$ given by
\begin{equation}\label{eq.r.tilde.def}
    \widetilde{r}(k):= (1 - |r(k)|^2)/k^2 .
\end{equation}
It follows from the definition of the set~$\mathcal{R}_{>0}$ that the function
\[
    \bigl(1- |r(k)|^{2}\bigr)\frac{k^2+1}{k^2}
            = 1 - r(k)r(-k) + \widetilde{r}(k)
\]
does not vanish on the real line, belongs to $1\dotplus \widehat X$ and thus is an invertible element there.

Finally, $H_{+}^{2}(\mathbb{R})$ is the Hardy space of functions $F$ on the
upper half-plane with%
\[
    \left\Vert F\right\Vert _{H_{+}^{2}(\mathbb{R})}
        :=\sup_{y>0}\left\Vert F(\,\cdot+iy)\right\Vert _{L^{2}(\mathbb{R})}%
\]
finite. These functions are determined by their boundary values $f$ on
$\mathbb{R}$ (i.e., by the limits of~$F(\,\cdot+iy)$ as $y\to0+$ in the
topology of $L^{2}(\mathbb{R})$) and $\left\Vert F\right\Vert _{H_{+}%
^{2}(\mathbb{R})}=\left\Vert f\right\Vert _{L^{2}(\mathbb{R})}$.
The Hardy space $H_{-}^{2}(\mathbb{R})$ for the lower half-plane is defined analogously. Clearly, $H_{+}^{2}(\mathbb{R})$ (resp.~$H_{-}^{2}(\mathbb{R})$) consists of Fourier transforms of functions in $L^2(\mathbb{R})$ supported on the positive half-line~$\mathbb{R^+}$ (resp., on the negative half-line~$\mathbb{R^-}$), so that $L^{2}(\mathbb{R})=H_{-}^{2}(\mathbb{R})\oplus H_{+}^{2}(\mathbb{R})$. For $f\in L^{2}(\mathbb{R})$, we denote by ${\mathcal C}$ the Cauchy integral operator%
\[
    \left({\mathcal C}f\right)  (z)
        =\frac{1}{2\pi i}\int_{\mathbb{R}}\frac{1}{s-z}f(s)\,ds,
        \qquad z \in {\mathbb C} \setminus {\mathbb R},
\]
and by ${\mathcal C}_{\pm}$ the operators%
\begin{equation}\label{eq.Cauchy}%
    \left({\mathcal C}_{\pm}f\right)  (s)=\lim_{\varepsilon\downarrow0}
            \left(\mathcal{C}f\right)(s\pm i\varepsilon),
    \qquad s \in {\mathbb R},
\end{equation}
where the limit is taken in~$L^{2}(\mathbb{R})$. The operators~${\mathcal C}_{+}$ and~$-{\mathcal C}_{-}$ are Riesz orthogonal projections onto~$H_{+}^{2}(\mathbb{R})$ and~$H_{-}^{2}(\mathbb{R})$ respectively, and ${\mathcal C}_{+}-{\mathcal C}_{-}=I$ as operators on~$L^{2}(\mathbb{R})$. We have the formulas%
\[
    \left({\mathcal C}_{\pm}f\right)(s)
        = \pm\mathcal{F}^{-1}\chi_{\pm}\mathcal{F} f,
\]
where $\mathcal{F}^{-1}$ is the inverse of the Fourier transform~\eqref{eq.Fourier} and $\chi_{+}$ (resp. $\chi_{-}$) is the indicator function of~$\mathbb{R}^+$
(resp.\ of $\mathbb{R}^-$), implying that $\mathcal{C}_\pm$ are continuous operators in~$\widehat{X}$.

\section{Extremal solutions and Riccati representatives}

\label{sec.riccati}

Here we review some results from \cite{KPST:2005} (see especially Proposition
3.5 and Lemma 5.1 there) connecting positive zero-energy solutions and Riccati
representatives, and then justify the representation (\ref{eq.riccati}) for
potentials $q\in\mathcal{Q}$.

Suppose that $q\in H_{\mathrm{loc}}^{-1}(\mathbb{R})$ is a real-valued
distribution and define the quadratic form%
\[
\mathfrak{s}(\varphi)
    =\int\left\vert \varphi^{\prime}(x)\right\vert^{2}dx
      + \bigl\langle q,|\varphi|^{2}\bigr\rangle
\]
for $\varphi\in C_{0}^{\infty}(\mathbb{R})$, where $\left\langle
\,\cdot\,,\,\cdot\,\right\rangle $ denotes the dual pairing between
$H_{\mathrm{loc}}^{-1}(\mathbb{R})$ and $H_{\mathrm{comp}}^{1}(\mathbb{R})$.
We will denote by $\mathfrak{s}(\varphi,\psi)$ the associated sesquilinear
form. Set%
\[
    \lambda_{0}(q)
        =\inf\left\{ \mathfrak{s}(\varphi)\,:\,
        \varphi\in C_{0}^{\infty}(\mathbb{R}),\
        \left\Vert \varphi\right\Vert =1\right\}  .
\]
If $\lambda_{0}(q)\geq0$, then the space%
\[
    \operatorname*{Pos}(q)
        = \left\{y\in H_{\mathrm{loc}}^{1}(\mathbb{R})\,:\,
          y(0)=1,\ y>0,\ \mathfrak{s}(\varphi,y)=0
          \text{ for all }\varphi\in C_{0}^{\infty}(\mathbb{R})\right\}
\]
is nonempty and consists of normalized, positive distributional solutions to
the zero-energy Schr\"{o}\-din\-ger equation~$-y^{\prime\prime}+qy=0$. Given
any $y_{0}\in\operatorname*{Pos}(q)$, the function%
\[
    y_{1}(x)=y_{0}(x)\left(  1+c_1\int_{0}^{x}\frac{ds}{y^2_{0}(s)}\right)
\]
belongs to $\operatorname*{Pos}(q)$ whenever%
\[
    0\leq c_1\leq\left(  \int_{-\infty}^{0}\frac{ds}{y^2_{0}(s)}\right)^{-1},
\]
while%
\[
    y_{2}(x)=y_{0}(x)\left(  1+c_2\int_{x}^{0}\frac{ds}{y^2_{0}(s)}\right)
\]
belongs to $\operatorname*{Pos}(q)$ whenever
\[
    0\leq c_2\leq\left(\int_{0}^{\infty}\frac{ds}{y^2_{0}(s)}\right)^{-1}.
\]
There exist unique extremal elements $y_{\pm}$ of $\operatorname*{Pos}(q)$,
characterized respectively by
\[
    \int_{0}^{\infty}\frac{ds}{y^2_{+}(s)}=+\infty
\]
and%
\[
    \int_{-\infty}^{0}\frac{ds}{y^2_{-}(s)}=+\infty,
\]
so that any $y\in\operatorname*{Pos}(q)$ is written as $y=\theta
y_{+}+(1-\theta)y_{-}$ for a unique $\theta\in\left[  0,1\right]  $. If we set%
\begin{align*}
    m_{+} &  =\left(  \int_{-\infty}^{0}\frac{ds}{y^2_{+}(s)}\right)  ^{-1},\\
    m_{-} &  =\left(  \int_{0}^{ \infty}\frac{ds}{y^2_{-}(s)}\right)  ^{-1}%
\end{align*}
(with $m_{\pm}=0$ if the corresponding integral diverges), it is not difficult to show that%
\begin{align}
    y_{-}(x) &=y_{+}(x)\left(  1+m_{+}\int_{0}^{x}\frac{ds}{y^2_{+}(s)}\right)
        ,\label{eq.y-}\\
    y_{+}(x) &=y_{-}(x)\left(  1+m_{-}\int_{x}^{0}\frac{ds}{y^2_{-}(s)}\right)
        .\label{eq.y+}%
\end{align}
The ratio $y_{-}/y_{+}$ is continuously differentiable; computing its
logarithmic derivative at $x=0$ and using the
relations~\eqref{eq.y-}--\eqref{eq.y-} and $y_{\pm}(0)=1$, we find that%
\begin{equation}
    m_{+}={y_{-}^{\prime}(0)}-{y_{+}^{\prime}(0)}=m_{-}.\label{eq.m+-}%
\end{equation}
Henceforth we set $m_{+}=m_{-}=:m$. If $m=0$, then $y_{+}=y_{-}$, but otherwise the
extremal solutions are distinct. The following simple lemma is a direct
consequence of the observations above.

\begin{lemma}
The solutions $y_{\pm}$ are uniquely determined by the data%
\[
    \bigl( \left.  y_{+}\right\vert_{\mathbb{R}^+},
         \left.  y_{-}\right\vert_{\mathbb{R}^-}, m \bigr)  .
\]

\end{lemma}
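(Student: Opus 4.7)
The plan is to show that each of $y_+$ and $y_-$ can be reconstructed on all of $\mathbb{R}$ from the given triple, using the explicit formulas~\eqref{eq.y-} and~\eqref{eq.y+} to transport information across the origin. The point is that \eqref{eq.y-} expresses $y_-$ in terms of $y_+$ (and $m=m_+$), while \eqref{eq.y+} expresses $y_+$ in terms of $y_-$ (and $m=m_-$); since $m_+=m_-=m$ by~\eqref{eq.m+-}, each extremal solution on one half-line can be recovered from the other extremal solution on that same half-line together with~$m$.

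More precisely, I would argue as follows. Suppose we are given $y_+$ on $\mathbb{R}^+$, $y_-$ on $\mathbb{R}^-$, and the number $m$. For $x>0$, formula~\eqref{eq.y-} reads
\[
    y_-(x) = y_+(x)\left(1 + m\int_0^x \frac{ds}{y_+^2(s)}\right),
\]
and the right-hand side is completely determined by the data, which yields $y_-|_{\mathbb{R}^+}$. Combined with the prescribed $y_-|_{\mathbb{R}^-}$ and the normalization $y_-(0)=1$ forced by $y_-\in\operatorname{Pos}(q)$ and continuity, this determines $y_-$ everywhere on $\mathbb{R}$. Symmetrically, for $x<0$ formula~\eqref{eq.y+} reads
\[
    y_+(x) = y_-(x)\left(1 + m\int_x^0 \frac{ds}{y_-^2(s)}\right),
\]
which is determined by the data and produces $y_+|_{\mathbb{R}^-}$; gluing with $y_+|_{\mathbb{R}^+}$ and the value $y_+(0)=1$ gives $y_+$ on all of $\mathbb{R}$.

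There is essentially no obstacle here: the lemma is a bookkeeping statement, and the only thing one needs beyond the identities~\eqref{eq.y-}--\eqref{eq.y+} is the identification $m_+=m_-$ already recorded in~\eqref{eq.m+-}. The integrals $\int_0^x ds/y_+^2(s)$ for $x>0$ and $\int_x^0 ds/y_-^2(s)$ for $x<0$ are finite by continuity and positivity of $y_\pm$ on the respective half-lines, so both reconstruction formulas are unambiguous. Thus the map $(y_+,y_-)\mapsto\bigl(y_+|_{\mathbb{R}^+},y_-|_{\mathbb{R}^-},m\bigr)$ is injective, proving the lemma.
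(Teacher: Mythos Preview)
Your argument is correct and is exactly what the paper intends: the lemma is stated there as ``a direct consequence of the observations above,'' namely the identities~\eqref{eq.y-}--\eqref{eq.y+} together with $m_+=m_-=m$ from~\eqref{eq.m+-}, and you have simply spelled out that reconstruction explicitly.
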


The logarithmic derivatives
$
    u_{\pm}:={y_{\pm}^{\prime}}/{y_{\pm}}
$
belong to $L_{\mathrm{loc}}^{2}(\mathbb{R})$ and determine Riccati representatives for $q$. The function
\[
    v(x)=u_{-}(x)-u_{+}(x)
\]
satisfies $v^{\prime}=-(u_{+}+u_{-})v$ and thus is equal to%
\begin{equation}
    v(x) = v(0)\exp\Bigl\{ -\int_{0}^{x}
        \bigl[  u_{+}(s)+u_{-}(s)\bigr]\,ds\Bigr\}.\label{eq.vx}%
\end{equation}
Therefore $v$ is H\"{o}lder continuous of order~$\tfrac12$. If we recall that
$v(0)=m$, this shows that $v$ is either identically zero or strictly
positive, and suggests an alternative representation for $q$. Define $u\in
L_{\mathrm{loc}}^{2}(\mathbb{R})$ by%
\begin{equation}
    u(x)=\left\{ \begin{array} [c]{ll} u_{+}(x), & x>0,\\ u_{-}(x), & x<0.
                 \end{array}\right.  \label{eq.u}%
\end{equation}

\begin{lemma}\label{lemma.r1}
Suppose that $q\in H_{\mathrm{loc}}^{-1}(\mathbb{R})$ with
$\lambda_{0}(q)\geq0$, let $y_{+}$ and $y_{-}$ be the extremal positive
solutions for $q$, and let $u_{\pm}=y_{\pm}^{\prime}/y_{\pm}$. Also, define~$u$ by~(\ref{eq.u}) and set $v=u_{-}-u_{+}$. Then%
\[
    q=u^{\prime}+u^{2}+v(0)\delta_{0}%
\]
as distributions in $H_{\mathrm{loc}}^{-1}(\mathbb{R})$, where $\delta_{0}$ is
the Dirac $\delta$-distribution supported at $x=0$.
\end{lemma}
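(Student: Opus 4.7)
The plan is to exploit the decomposition $u = u_{+} + v\chi_{-}$, where $\chi_{-}$ denotes the indicator of $\mathbb{R}^{-}$, together with the distributional identities $q = u_{+}^{\prime}+u_{+}^{2} = u_{-}^{\prime}+u_{-}^{2}$ which characterize $u_{\pm}$ as Riccati representatives of~$q$. Since $u$ agrees with $u_{+}$ on $\mathbb{R}^{+}$ and with $u_{-}$ on $\mathbb{R}^{-}$, a first observation is that the distribution $q - (u^{\prime}+u^{2})$ vanishes on $\mathbb{R}\setminus\{0\}$ and hence must be supported at the origin; the content of the lemma is to identify this distribution as precisely $v(0)\delta_{0}$.

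Next I would compute the two pieces of $u^{\prime}+u^{2}$ separately. For the quadratic term, a direct expansion gives
\[
    u^{2} = u_{+}^{2} + v(u_{+}+u_{-})\chi_{-}
\]
as an equality of locally integrable functions, and the identity $v^{\prime}=-(u_{+}+u_{-})v$ implicit in~\eqref{eq.vx} then converts this into $u^{2} = u_{+}^{2} - v^{\prime}\chi_{-}$; note that $v^{\prime}\in L^{1}_{\mathrm{loc}}(\mathbb{R})$ because $v$ is continuous and $u_{\pm}\in L^{2}_{\mathrm{loc}}(\mathbb{R})$. For the derivative term, integration by parts against a test function $\varphi\in C_{0}^{\infty}(\mathbb{R})$, combined with the absolute continuity of $v$ on compact intervals, yields the distributional identity
\[
    (v\chi_{-})^{\prime} = v^{\prime}\chi_{-} - v(0)\delta_{0},
\]
so that $u^{\prime} = u_{+}^{\prime} + v^{\prime}\chi_{-} - v(0)\delta_{0}$. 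Adding the two contributions and invoking $q = u_{+}^{\prime}+u_{+}^{2}$ produces $u^{\prime}+u^{2} = q - v(0)\delta_{0}$, which rearranges to the desired identity $q = u^{\prime}+u^{2}+v(0)\delta_{0}$.

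The only step that is not purely mechanical is the distributional identity for $(v\chi_{-})^{\prime}$: in order to make sense of the product $v\cdot\delta_{0}$ and extract the coefficient $v(0)$, one needs $v$ to be a genuine continuous function with a well-defined value at the origin, and this is exactly what the explicit formula~\eqref{eq.vx} and the accompanying H\"older continuity supply. Thus the main obstacle is really bookkeeping rather than any substantial analytic difficulty: all the regularity needed has been extracted in advance, and the argument reduces to tracking the single jump contribution located at $x=0$.
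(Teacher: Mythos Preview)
Your proof is correct and takes a genuinely different route from the paper's. The paper observes that $q-(u'+u^2)$ is supported at the origin, hence equals $\alpha\delta_0$ for some $\alpha$, and then evaluates $\alpha$ by testing against a shrinking bump $\varphi_\varepsilon(x)=\varphi(x/\varepsilon)$ and passing to the limit $\varepsilon\downarrow0$; the H\"older continuity of $v$ is used to control the error term in that limit. You instead compute $u'+u^2$ directly from the decomposition $u=u_++v\chi_-$, using the identity $v'=-(u_++u_-)v$ to collapse the quadratic cross terms and the absolute continuity of $v$ to justify the distributional product rule $(v\chi_-)'=v'\chi_--v(0)\delta_0$. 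Your argument is shorter and avoids any limiting procedure; the paper's is slightly more hands-on but requires no distributional bookkeeping with the indicator $\chi_-$. Both rely on the same underlying regularity of $v$ encoded in~\eqref{eq.vx}, and neither has any real advantage over the other.
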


\begin{proof}
Let $q_{\ast}=u^{\prime}+u^{2}$. Then $q-q_{\ast}$ is a distribution in
$H_{\mathrm{loc}}^{-1}(\mathbb{R})$ with support at $x=0$, hence a tempered
distribution of the form $\alpha\delta_{0}$ by the regularity theorem for
tempered distributions. To evaluate $\alpha$ we test with a function of the
form $\varphi_{\varepsilon}(x)=\varphi(x/\varepsilon)$ where $\varphi\in
C_{0}^{\infty}(\mathbb{R})$ vanishes outside the interval~$[-1,1]$ and
satisfies~$\varphi(0)=1$. We compute (using $q=u_{+}^{\prime}+u_{+}^{2}$)%
\begin{align*}
    \alpha
        &=\left\langle q-q_{\ast},\varphi_{\varepsilon}\right\rangle \\
        &=\int_{-\infty}^{0}v(x)\frac{d}{dx}\varphi_{\varepsilon}(x)\,dx
            +\int_{-\infty}^{0}\left[u_{+}^{2}(x)-u_{-}^{2}(x)\right]  \varphi_{\varepsilon}(x)\,dx
\end{align*}
The second right-hand term vanishes as $\varepsilon\downarrow0$ since $u_{+}$
and $u_{-}$ belong to $L_{\mathrm{loc}}^{2}(\mathbb{R})$. In the first term we
have%
\[
    \int_{-\infty}^{0}v(x)\frac{d}{dx}\varphi_{\varepsilon}(x)\,dx
        = \int_{-\infty}^{0}v(0)\frac{d}{dx}\varphi_{\varepsilon}(x)\,dx
            +\int_{-\infty}^{0}\left[ v(x)-v(0)\right]
                \frac{d}{dx}\varphi_{\varepsilon}(x)\,dx.
\]
The first right-hand term gives $v(0)$. Using H\"{o}lder continuity of $v$ we
estimate the second right-hand term as follows:
\[
    \biggl|\int_{-\infty}^{0}\left[ v(x)-v(0)\right]
            \frac{d}{dx}\varphi_{\varepsilon}(x)\,dx\biggr|
    \le C_{1} \varepsilon^{-1}\int_{-\varepsilon}^{0}
        \left\vert x\right\vert^{1/2}\,dx
        = C_{2}\varepsilon^{1/2},
\]
with some positive constants~$C_1$ and $C_2$.
Thus, as $\varepsilon\downarrow0$, the second right-hand term vanishes, and
$\alpha=v(0)$.
\end{proof}

Lemma~\ref{lemma.r1} says that a real-valued distribution $q\in
H_{\mathrm{loc}}^{-1}(\mathbb{R})$ with $\lambda_{0}(q)\geq0$ is uniquely
determined by the data
\[
    \bigl( \left.  u_{+}\right\vert_{\mathbb{R}^+},
         \left.  u_{-}\right\vert_{\mathbb{R}^-}, v(0)\bigr).
\]
It turns out that the restrictions $\left. u_{+}\right\vert _{\mathbb{R}^+}$ and
$\left. u_{-}\right\vert _{\mathbb{R}^-}$ of the extremal Riccati
representatives and the value $v(0)$ are independent coordinates
in~$\mathcal{Q}$. Namely, the following statement holds true.

\begin{lemma}
\label{lemma.r2} Assume $w_{+}$ and $w_{-}$ are real-valued functions in
$X_{0}^{+}$ and $X_{0}^{-}$ respectively and that $\alpha\ge0$. Then there
exists a unique distribution $q\in\mathcal{Q}$ whose extremal Riccati
representatives $u_{\pm}$ and $v=u_{-}-u_{+}$ satisfy
\[
 \bigl( \left.  u_{+}\right\vert_{\mathbb{R}^+},
         \left.  u_{-}\right\vert_{\mathbb{R}^-}, v(0) \bigr)
            =\left(  w_{+},w_{-},\alpha\right).
\]
\end{lemma}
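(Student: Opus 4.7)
Uniqueness is already contained in Lemma~\ref{lemma.r1}: it identifies any $q\in\mathcal{Q}$ with given extremal data $(u_+|_{\mathbb{R}^+},u_-|_{\mathbb{R}^-},v(0))$ with the distribution $u'+u^2+v(0)\delta_{0}$, where $u$ is the piecewise function~(\ref{eq.u}) built from $w_{\pm}$. So the real task is to exhibit, for prescribed $(w_+,w_-,\alpha)$, a $q\in\mathcal{Q}$ whose extremal Riccati data matches $(w_+,w_-,\alpha)$.

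The plan is to reverse-engineer the extremal positive zero-energy solutions. First, I would form the one-sided exponentials
\[
    y_+^{\circ}(x):=\exp\!\Bigl(\int_{0}^{x}w_+(s)\,ds\Bigr)\ (x\ge0),\qquad
    y_-^{\circ}(x):=\exp\!\Bigl(\int_{0}^{x}w_-(s)\,ds\Bigr)\ (x\le0),
\]
which, thanks to $w_{\pm}\in L^{1}(\mathbb{R}^{\pm})$, are continuous, strictly positive, and bounded between positive constants on their half-lines, with finite positive limits at $\pm\infty$. Then extend them to all of $\mathbb{R}$ via the reduction-of-order formulas~(\ref{eq.y-})--(\ref{eq.y+}) with $m=\alpha$:
\[
    y_+(x):=y_-^{\circ}(x)\Bigl(1+\alpha\!\int_{x}^{0}\!\!\frac{ds}{(y_-^{\circ}(s))^{2}}\Bigr)\ (x\le0),\qquad
    y_-(x):=y_+^{\circ}(x)\Bigl(1+\alpha\!\int_{0}^{x}\!\!\frac{ds}{(y_+^{\circ}(s))^{2}}\Bigr)\ (x\ge0),
\]
and $y_{\pm}\equiv y_{\pm}^{\circ}$ on the natural half-lines. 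Both $y_{\pm}$ are then positive $H^{1}_{\mathrm{loc}}(\mathbb{R})$ functions with $y_{\pm}(0)=1$, and their logarithmic derivatives $u_{\pm}:=y_{\pm}^{\prime}/y_{\pm}$ lie in $L^{2}_{\mathrm{loc}}(\mathbb{R})$. On each half-line the pair $\{y_+,y_-\}$ arises via the classical second-solution construction, so both solve a common distributional Schr\"odinger equation $-y''+qy=0$ with $q$ unambiguously given by $u_{\pm}^{\prime}+u_{\pm}^{2}$ on that half-line; patching the two halves yields a candidate distribution $q$ on $\mathbb{R}$ satisfying $q=u_+^{\prime}+u_+^{2}=u_-^{\prime}+u_-^{2}$ globally (no extra $\delta_{0}$ contribution, because the identity $v'=-(u_++u_-)v$ holds on each side of~$0$ and $v=u_--u_+$ is already continuous at~$0$ with value~$\alpha$ by construction).

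It then remains to verify: (a)~$u_{\pm}\in L^{2}(\mathbb{R})\cap L^{1}(\mathbb{R}^{\pm})$, hence $q\in\mathcal{Q}$; (b)~$y_{\pm}$ are indeed the extremal positive solutions; and (c)~the data match. For (a), membership on the natural half-line is immediate since $u_{\pm}|_{\mathbb{R}^{\pm}}=w_{\pm}\in X_{0}^{\pm}$. On the opposite half-line, a direct computation gives $u_+=w_--v$ on $\mathbb{R}^-$ with $v(x)=\alpha/\bigl(y_+(x)\,y_-^{\circ}(x)\bigr)$; since $y_-^{\circ}$ has a finite positive limit at $-\infty$ while $y_+$ grows linearly there, one obtains $v(x)=O(1/|x|)$, placing $v$ in $L^{2}(\mathbb{R}^-)$ (and symmetrically for $u_-$ on $\mathbb{R}^+$). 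Global membership $q\in H^{-1}(\mathbb{R})$ then follows from $u_\pm \in L^2(\mathbb{R})$ via the standard embeddings $u'\in H^{-1}$ and $u^{2}\in L^{1}\hookrightarrow H^{-1}$, and $\lambda_{0}(q)\ge0$ from the usual Riccati identity $\mathfrak{s}(\varphi)=\|\varphi'+u_+\varphi\|_{L^{2}}^{2}\ge0$ on $C_{0}^{\infty}(\mathbb{R})$. For (b), since $y_+$ (resp.\ $y_-$) has a finite positive limit at $+\infty$ (resp.\ $-\infty$), the integrals $\int_{0}^{\infty}ds/y_+^{2}$ and $\int_{-\infty}^{0}ds/y_-^{2}$ diverge, which characterizes the extremals. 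Point (c) is tautological from the construction.

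The main technical obstacle I anticipate is step (a): controlling the "cross-term" $v$ on the opposite half-line in $L^{2}$. This hinges on the sharp asymptotic $v(x)\sim C/|x|$, which in turn requires combining the integrability $w_{\pm}\in L^{1}(\mathbb{R}^{\pm})$ (to pin down the limits of $y_{\pm}^{\circ}$ at $\pm\infty$) with the linear divergence of $\int_{x}^{0}ds/(y_-^{\circ})^{2}$ as $x\to-\infty$. Everything else is bookkeeping of the definitions, formal differentiation, and invocation of Lemma~\ref{lemma.r1}.
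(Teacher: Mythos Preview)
Your proposal is correct and follows essentially the same construction as the paper: build $y_{\pm}$ as half-line exponentials, extend via the reduction-of-order formulas~(\ref{eq.y-})--(\ref{eq.y+}) with $m=\alpha$, then verify the Riccati representatives lie in the right spaces and are extremal. The one noteworthy variation is your treatment of $q_+=q_-$ at the origin: the paper checks this by pairing $q_+-q_-$ against scaled test functions $\varphi_\varepsilon$ and invoking H\"older continuity of~$v$, whereas you argue more directly that $v=\alpha/(y_+y_-)$ is globally in $H^1_{\mathrm{loc}}$ and satisfies $v'=-(u_++u_-)v$ distributionally, so $q_+-q_-=-v'-(u_++u_-)v=0$ without any test-function computation; this is a clean shortcut and is justified once you note that $y_\pm$ are continuous across $0$ (both pieces equal~$1$ there), hence lie in $H^1_{\mathrm{loc}}(\mathbb{R})$.
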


\begin{proof}
Uniqueness of~$q$ follows from Lemma~\ref{lemma.r1}. To prove existence, we
show how to construct $y_{+}$ and $y_{-}$ which are zero-energy extremal
positive solutions associated to a single distribution~$q$ looked for. We set%
\[
    y_{+}(x)=\exp\left(  \int_{0}^{x}w_{+}(s)\,ds\right)
\]
for $x>0$,
\[
    y_{-}(x)=\exp\left(  \int_{0}^{x}w_{-}(s)\,ds\right)
\]
for $x<0$, extend $y_{+}$ to $\mathbb{R}^-$ using (\ref{eq.y+}) with
$m_{-}=\alpha$, and extend $y_{-}$ to $\mathbb{R}^+$ using (\ref{eq.y-}) with
$m_{+}=\alpha$. We then set $u_{+}:=y_{+}^{\prime}/y_{+}$ and $u_{-}%
:=y_{-}^{\prime}/y_{-}$, and compute that
\[
    u_{+}(x) =  \begin{cases}
        w_{-}(x) - \alpha y_{-}^{-2}(x) \left/  \Bigl(1 + \alpha\int_{x}^{0}
                y_{-}^{-2}(s)\,ds\Bigr) \right. , \qquad & x<0,\\
        w_{+}(x), & x>0,
                \end{cases}
\]
with a similar expression for $u_{-}$. In particular, it follows that
$v:=u_{-}-u_{+}$ is a continuous function with~$v(0)=\alpha$. Since $w_{\pm}$
belong to $X_{0}^{\pm}$, we see that $u_{+}\in X_{0}^{+}$ and that $y_{-}$
tends to a nonzero limit as $x\to-\infty$. Thus $u_{+}$ belongs also to
$L^{2}(\mathbb{R}^{-})$, i.e., $u_{+}\in X^{+}$. Similar arguments show that
$u_{-}\in X^{-}$.

Set $q_{\pm}:=u_{\pm}^{\prime}+u_{\pm}^{2}$; then straightforward
calculations show that the distributions $q_+$ and $q_-$ coincide outside the origin; namely, in the distributional sense, they are equal to~$w_{-}^{\prime} + w_{-}^{2}$ for $x<0$ and to $w_{+}^{\prime} + w_{+}^{2}$ for $x>0$. The difference $q_{+}-q_{-}$ is a distribution belonging to~$H^{-1}(\mathbb{R})$ and supported at $x=0$, and thus must be of the form $\beta\delta_{0}$. Taking $\varphi_{\varepsilon}$ as in the proof of Lemma~\ref{lemma.r1} we compute%
\[
    \beta
        = \lim_{\varepsilon\downarrow0}
         \left(\int v(x)\frac{d}{dx}\varphi_{\varepsilon}(x)\,dx
        + \int\varphi_{\varepsilon}(x)\left[u^2_{+}(x)-u^2_{-}(x)\right]\,dx\right)  .
\]
The second right-hand term converges to $0$ as $\varepsilon\downarrow0$ and
the first right-hand term is given by
\[
    \int v(0)\frac{d}{dx}\varphi_{\varepsilon}(x)\,dx
     + \int\left[  v(x)-v(0)\right] \frac{d}{dx}\varphi_{\varepsilon}(x)\,dx.
\]
The first term here is zero and the second one can be estimated as before to
prove that $\beta=0$, as claimed.

This shows that $u_{+}$ and $u_{-}$ represent the same distribution $q$, which
thus belongs to~$\mathcal{Q}$. By construction, $y_{\pm}$ are extremal
positive solutions of the equation $-y^{\prime\prime}+qy=0$, and $u_{\pm}$ are
the corresponding extremal Riccati representatives with prescribed
restrictions $w_{\pm}=\left.  u_{\pm}\right\vert _{\mathbb{R}^{\pm}}$ and the
value of $v(0)$. The proof is complete.
\end{proof}

In what follows we will use the Riccati representation for elements of
$\mathcal{Q}_{>0}$, and we will topologize $\mathcal{Q}_{>0}$ by the topology
of $X_{0}^{+}\times X_{0}^{-}\times\mathbb{R}^{+}$ on Riccati representations.
In Paper I, we topologized $\mathcal{Q}_{0}$ by $X$ since $v(0)=0$ and
$u_{+}=u_{-}$. One should think of those $q\in\mathcal{Q}$ with $v(0)=0$ as
lying \textquotedblleft at infinity\textquotedblright\ in the topology of
$\mathcal{Q}_{>0}$.

If $q\in\mathcal{Q}$ corresponds to a triple $(w_+,w_-,v(0))\in X^+\times X^- \times [0,\infty)$, then the quadratic form~$\mathfrak{s}$ is closed on~$H^1(\mathbb{R})$~\cite{HM:2001} and the corresponding Schr\"{o}dinger operator~$S$ can be written as~(cf.~\cite{KPST:2005})
\[
    S = \Bigl(\frac{d}{dx}-w\Bigr)^*\Bigl(\frac{d}{dx}-w\Bigr) + v(0) \delta_0,
\]
where $w$ is a function in $X$ whose restrictions onto $\mathbb{R}^+$ and $\mathbb{R}^-$ coincide with $w_+$ and $w_-$ respectively. Therefore $S$ is indeed a non-negative operator.

We shall need the following property of non-negative Schr\"{o}dinger operators.

\begin{lemma}\label{lemma.jost}
Assume that $q\in H^{-1}(\mathbb R)$ is such that $\lambda_0(q) \geq 0$ and the Jost solutions $f_\pm(\cdot,0)$ exist. Then $f_\pm(\,\cdot\,,0)$ are strictly positive on~$\mathbb{R}$.
\end{lemma}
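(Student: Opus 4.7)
The plan is to show that $f_+(\,\cdot\,,0)$ is a positive scalar multiple of the extremal positive zero-energy solution~$y_+$ (and similarly $f_-(\,\cdot\,,0)$ of $y_-$). Since $\lambda_0(q)\ge 0$, the solution $y_+$ exists, is strictly positive on $\mathbb{R}$, and satisfies the extremal condition $\int_0^\infty y_+(s)^{-2}\,ds=+\infty$.

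My first step is to make sense of the Wronskian $W:=f_+\,y_+'-f_+'\,y_+$ of $f_+(\,\cdot\,,0)$ and $y_+$ as a genuine constant on $\mathbb{R}$. For the distributional potential $q$, this is cleanest through the Miura factorization $S=A_+^*A_+$ with $A_+:=d/dx-u_+$ and $u_+:=y_+'/y_+\in L^2_{\mathrm{loc}}$: from $S f_+(\,\cdot\,,0)=0$ one deduces that $g:=A_+f_+(\,\cdot\,,0)$ satisfies $A_+^*g=0$, i.e.\ $g'=-u_+g$, so $g=c/y_+$ for some constant $c$, and a direct computation yields $W=-c$. The reduction-of-order identity then reads
\[
    \bigl( f_+(\,\cdot\,,0)/y_+ \bigr)(x)
        = \bigl( f_+(\,\cdot\,,0)/y_+ \bigr)(0)
           - W\int_0^x y_+(s)^{-2}\,ds.
\]

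My second step is to force $W=0$ using the behaviour at $+\infty$. The right-hand side is of the form $\mathrm{const}-W\,t(x)$ where $t(x):=\int_0^x y_+^{-2}$ diverges to $+\infty$ by the extremal property. On the left-hand side, $f_+(x,0)\to 1$, while $y_+(x)=\exp\!\bigl(\int_0^x u_+\bigr)$ has a finite positive limit at $+\infty$ because the integrability of $u_+$ near $+\infty$ is built into the setting $q\in\mathcal{Q}$. Hence $\bigl(f_+(\,\cdot\,,0)/y_+\bigr)(x)$ stays bounded as $x\to+\infty$, which is compatible with the divergence of $t(x)$ only if $W=0$. This gives $f_+(\,\cdot\,,0)=c\,y_+$ for a single constant $c$; the limits $f_+(x,0)\to 1>0$ and $y_+>0$ then force $c>0$, so $f_+(\,\cdot\,,0)>0$ on $\mathbb{R}$.

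The main obstacle I anticipate is the distributional regularity: for $q\in H^{-1}(\mathbb{R})$, neither $f_+'(\,\cdot\,,0)$ nor $y_+'$ is defined pointwise, so both the constancy of the Wronskian and the evaluation of the limit $y_+(+\infty)$ must be argued at the level of Miura representatives rather than through classical ODE identities. The factorization $S=A_+^*A_+$ and the explicit solution $A_+f_+(\,\cdot\,,0)=c/y_+$ are precisely the tools that convert the formal Wronskian manipulations into rigorous statements about functions in $L^2_{\mathrm{loc}}$, and once $u_+\in L^1$ near $+\infty$ is in hand, the limit argument at $+\infty$ becomes routine.
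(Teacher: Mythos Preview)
Your overall strategy---show that $f_+(\,\cdot\,,0)$ is a positive multiple of the extremal solution $y_+$ via a Wronskian/reduction-of-order argument---is the same as the paper's. The handling of distributional regularity through the factorization $A_+^*A_+$ is fine and equivalent to the paper's use of quasi-derivatives.

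There is, however, a genuine gap in your second step. To conclude that $f_+(x,0)/y_+(x)$ stays bounded as $x\to+\infty$, you invoke $u_+\in L^1(\mathbb{R}^+)$ so that $y_+$ has a finite positive limit. But this integrability is the defining property of~$\mathcal{Q}$, not part of the lemma's hypotheses, which only assume $q\in H^{-1}(\mathbb{R})$, $\lambda_0(q)\ge 0$, and existence of the Jost solution. This matters: the lemma is applied in Examples~\ref{ex.FM} and~\ref{ex.coulomb-even} precisely to \emph{prove} that certain potentials belong to~$\mathcal{Q}$, so assuming $q\in\mathcal{Q}$ there would be circular. Under the bare hypotheses nothing prevents $y_+(x)\to 0$ along a subsequence, and then boundedness of $f_+/y_+$ is not immediate.

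The paper's proof sidesteps this by reversing the roles of $f_+$ and $y_+$. Since $f_+(x,0)\to 1$ by definition of the Jost solution, $f_+$ is bounded above and below on some half-line $(x_0,\infty)$; the second independent solution $g(x)=f_+(x,0)\int_{x_0}^x f_+(s,0)^{-2}\,ds$ therefore grows at least linearly. Any element of $\operatorname{Pos}(q)$ restricted to $(x_0,\infty)$ is a combination $\alpha f_+ + \beta g$, and if $\beta\neq 0$ it grows at least linearly, forcing $\int_0^\infty y_+^{-2}<\infty$ and contradicting the extremal property. Hence $y_+$ is a multiple of $f_+(\,\cdot\,,0)$. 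Your argument becomes correct if you make this swap: compute $(y_+/f_+)'=W/f_+^2$ on $(x_0,\infty)$ and use $f_+\to 1$ (which you \emph{do} have) rather than trying to control $y_+$ directly.
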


\begin{proof}
We discuss only the right Jost solution $f_+(\,\cdot\,,0)$, as the proof for $f_-(\,\cdot\,,0)$ is completely analogous.

Since $f_+(x,0)$ tends to~$1$ at $+\infty$, it remains within the interval~$(\tfrac12,2)$ for all $x$ greater than some $x_0\in\mathbb{R}$. The function
\[
   g(x) = f_+(x,0) \int_{x_0}^x \frac{ds}{f^2_+(x,0)}
\]
is a solution of the equation $-y'' + qy=0$ on~$(x_0,\infty)$ that is linearly independent of~$f_+(\cdot,0)$ and obeys there the bound $g(x) \ge (x-x_0)/8$. It follows that any element of $\operatorname*{Pos}(q)$ is either a multiple of $f_+(\cdot,0)$ or grows at infinity at least linearly. Since the extremal solution $y_+$ cannot have such a growth at infinity, we conclude that it must be a multiple of the Jost solution~$f_+(\cdot,0)$, and thus the latter is positive everywhere on~$\mathbb{R}$.
\end{proof}

\begin{remark}
\label{rem.top} Let as usual $u_{\pm}$ be the extremal Riccati representatives
for a $q\in\mathcal{Q}$, and let $w_{\pm}$ be their respective restrictions
to~$\mathbb{R}^{\pm}$. To show that a quantity depends continuously on
$\bigl(  w_{+},w_{-},v(0)\bigr)$, it will suffice to show that the same
quantity depends continuously on the restrictions of $u_{+}$ to a half-line
$(a,\infty)$, and of $u_{-}$ to a half-line $(-\infty,b)$ in the topology of
$X_{a}^{+}\times X_{b}^{-}$ for some $a<0$ and $b>0$. The reason is that these
restrictions determine $v(0)$ uniquely and continuously.
\end{remark}

We conclude this section with several examples of potentials in $\mathcal{Q}$.
The first shows that $\mathcal{Q}$ contains all Faddeev--Marchenko potentials
generating non-negative Schr\"{o}dinger operator~$S$ and the other demonstrate
that potentials in~$\mathcal{Q}$ might have local singularities typical for
$H_{\mathrm{loc}}^{-1}$---e.g., of Dirac delta-function type or Coulomb type.

\begin{example}\label{ex.FM}
Assume that $q\in L^{1}\bigl(\mathbb{R},(1+|x|)dx\bigr)$ is a
real-valued function of Faddeev--Marchenko class for which the corresponding
Schr\"{o}dinger operator~$S$ is non-negative. Denote by $f_{+}(\,\cdot\,,0)$ and
$f_{-}(\,\cdot\,,0)$ the Jost zero-energy solutions for~$S$; then $f_{\pm}(x,0)\to1$
as $x\to\pm\infty$. Since $S$ is non-negative and $q\in H^{-1}(\mathbb{R})$, the functions $f_{\pm}(\cdot,0)$ do not vanish on~$\mathbb{R}$ by Lemma~\ref{lemma.jost}.

It follows that~$f_{+}(\cdot,0)$ is bounded from above and bounded away from zero on the half-lines $(c,\infty)$ for every $c\in\mathbb{R}$. The function
 $y_{+} = f_{+}(\cdot, 0)/f_{+}(0,0)$
is the positive extremal solution corresponding to the potential~$q$, i.e.,
\[
    - y_{+}^{\prime\prime}(x) + q(x)y_{+}(x) =0.
\]
Since $f^{\prime}_{+}(x,0)$ tends to zero as $x\to\infty$ by the classical
theory, we conclude that
\[
    y_{+}^{\prime}(x) = - \int_{x}^{\infty}q(t) y_{+}(t)\,dt
\]
is bounded on every half-line $(c,\infty)$. Therefore,
\begin{align*}
\int_{0}^{\infty} |y_{+}^{\prime}(x)|\,dx
    & \le\sup_{x\ge0}|y_{+}(x)|\int_{0}^{\infty}\int_{x}^{\infty}|q(t)|\,dt\,dx\\
    & \le\sup_{x\ge0}|y_{+}(x)| \int_{0}^{\infty}t|q(t)|\,dt < \infty,
\end{align*}
so that $y_{+}^{\prime}$ belongs to $L^{1}(\mathbb{R}^{+})$ and, in view of
its boundedness, to~$L^{2}(\mathbb{R}^{+})$.

We now conclude that the corresponding ``right'' extremal Riccati
representative $u_{+}:=y_{+}^{\prime}/y_{+}$ belongs to~$X_{0}^{+}$ on the
positive half-line. Analogous arguments show that the restriction of similarly
constructed ``left'' extremal Riccati representative~$u_{-}$ onto~$\mathbb{R}%
^{-}$ belongs to~$X_{0}^{-}$. Lemmas~\ref{lemma.r1} and \ref{lemma.r2} now
imply that $q\in\mathcal{Q}$.
\end{example}

\begin{example}[{\cite[Example~1.6]{FHMP:2009}}]
Let $u$ be an even function that for $x>0$
equals $x^{-\alpha}\sin x^{\beta}$. Assume that $\alpha>1$ and $\beta
>\alpha+1$. Then $u$ belongs to~$X$ and the corresponding Miura potential
$q=u^{\prime}+u^{2}$ is of the form
\[
    q(x)= \beta\,\mathrm{sign}\,(x) |x|^{\beta-\alpha-1}\cos|x|^{\beta}
        +\tilde{q}(x)
\]
for some bounded function $\tilde{q}$. Thus $q$ is unbounded and oscillatory
but nevertheless belongs to~$\mathcal{Q}$.
\end{example}

\begin{example}\label{ex.delta} Set $q=\alpha\delta_{0}$, with~$\alpha>0$ and with~$\delta_{0}$ denoting the Dirac delta-function supported at~$x=0$. Recalling the definition of the corresponding Schr\"{o}\-din\-ger operator~\cite{AGHH:2005},
we conclude that the extremal solutions $y_{\pm}$ are different and equal
\[
    y_{+}(x)= \begin{cases} 1, & \ x>0\\ 1-\alpha x, & \ x<0 \end{cases}
            \qquad\text{and} \qquad
    y_{-}(x)= \begin{cases} 1+\alpha x, & \ x>0\\ 1, & \ x<0 \end{cases}
\]
respectively (see also~\cite{Levitan:1979}, \cite[Appendix~A]{KPST:2005},
and~\cite[Example~1.6]{FHMP:2009}). The corresponding extremal Riccati
representations are found to be
\[
    u_{+}(x)= \begin{cases} 0, & \ x>0\\
            \dfrac{-\alpha}{1-\alpha x}, & \ x<0 \end{cases}
            \qquad\text{and} \qquad
    u_{-}(x)= \begin{cases} \dfrac\alpha{1+\alpha x}, & \ x>0\\
            0, & \ x<0 \end{cases}
\]
respectively. Clearly, we have $u_{\pm}\in X^{\pm}$, so that $q\in\mathcal{Q}$.
\end{example}

\begin{example}[{\cite[Example~1.5]{FHMP:2009}}]\label{ex.coulomb}
Assume that $\phi\in C_0^\infty(\mathbb{R})$ is such that $\phi\equiv1$ on $(-1,1)$. Take $u(x) = \alpha\,\phi(x)\log|x|$ with $\alpha\in\mathbb{R}\setminus\{0\}$. Then~$u\in X$; moreover, since the distributional derivative of $\log|x|$ is the distribution~${\mathrm{P.v.}}\,1/x$, the corresponding Miura potential~$q=u'+u^2$ is smooth outside the origin and has there a Coulomb-type singularity. See e.g.~\cite{BDL:2000,Kurasov:1996,FLM:1995} and the references therein for discussion and rigorous treatment of Schr\"odinger operators with Coulomb potentials.
\end{example}

\begin{example}
\label{ex.coulomb-even} Potentials in~$\mathcal{Q}$ might have local
singularities typical for~$H^{-1}(\mathbb{R})$. Namely, take any real-valued distribution~$q\in H^{-1}(\mathbb{R})$ of compact support, for which the corresponding Schr\"odinger operator~$S$ is non-negative. We then claim that $q \in \mathcal{Q}$.

Indeed, any~$q \in H^{-1}(\mathbb{R})$ of compact support can be represented as~$Q'$ for a real-valued~$Q\in L^2_{\textrm{loc}}(\mathbb{R})$ that vanishes to the right of~$\supp q$ and is constant to the left of~$\supp q$.
The equation $-y^{\prime\prime}+qy =0$ should now be interpreted as the first-order system
\[
\frac{d}{dx}\binom{y_{1}}{y_{2}}
    = \begin{pmatrix} Q & 1\\ -Q^{2} & -Q \end{pmatrix}
            \binom{y_{1}}{y_{2}}
\]
for $y_1:=y$ and $y_2:= y'-Qy$.
Since $Q$ belongs locally to $L^{2}(\mathbb{R})$, for every $x_0\in\mathbb{R}$ and every complex numbers~$c_{1}$ and~$c_{2}$ the above system has a unique global absolutely continuous solution assuming the
prescribed values $y_{1}(x_0)=c_{1}$ and $y_{2}(x_0)=c_{2}$ at the point~$x=x_0$.

In particular, the solution of $-y^{\prime\prime}+qy=0$ that is
identically~$1$ to the right of~$\supp q$ admits a unique continuation to the whole line thus
giving the Jost solution~$f_{+}(\,\cdot\,,0)$. Since the operator~$S$ is
non-negative,  by Lemma~\ref{lemma.jost} $f_{+}(\,\cdot\,,0)$ never vanishes on the real line. Moreover,
$f_{+}(\,\cdot\,,0)$ is absolutely continuous, $f_{+}^{\prime}(\,\cdot\,,0)=y_{2}+Qy_{1}$ belongs locally to~$L^{2}(\mathbb{R})$, and thus the corresponding extremal Riccati representative~$u_{+}:=f_{+}^{\prime}(\,\cdot\,,0)/f_{+}(\,\cdot\,,0)$ belongs to~$L^{2}_{\mathrm{loc}}(\mathbb{R})$ as well. Since, moreover, $u_{+}=0$ to the right of~$\supp q$, we conclude that the restriction of~$u_{+}$ onto~$\mathbb{R}^{+}$ belongs to~$X_{0}^{+}$.

Similar constructions and arguments give the ``left'' extremal Riccati
representative~$u_{-}$ of~$q$ whose restriction onto~$\mathbb{R}^{-}$ belongs
to~$X_{0}^{-}$. By Lemmas~\ref{lemma.r1} and \ref{lemma.r2} we conclude that
$q\in\mathcal{Q}$.
\end{example}

The above reasoning can be adapted to a more general situation as follows.

\begin{proposition}
\label{pro.perturb} Assume that $q_{0}\in\mathcal{Q}$ and that $q_{1}$ is a
real-valued distribution in~$H^{-1}(\mathbb{R})$ with compact support such
that the Schr\"{o}dinger operator
\[
    -\frac{d^{2}}{dx^{2}} + q_{0} + q_{1}
\]
is nonnegative. Then $q_{0}+q_{1} \in\mathcal{Q}$.
\end{proposition}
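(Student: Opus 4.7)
The plan is to construct, for $q:=q_0+q_1$, real-valued functions $u_\pm\in L^2(\mathbb{R})\cap L^1(\mathbb{R}^\pm)$ satisfying $q=u_+'+u_+^2=u_-'+u_-^2$, which is exactly the membership criterion for~$\mathcal{Q}$. Denote by $u_\pm^{(0)}$ the extremal Riccati representatives for $q_0$ (which exist by the hypothesis $q_0\in\mathcal{Q}$) and by $y_\pm^{(0)}$ the corresponding extremal zero-energy positive solutions. The approach generalises Example~\ref{ex.coulomb-even}: since $\supp q_1$ is compact, $q$ coincides with $q_0$ outside a bounded interval, so I obtain $u_\pm$ as logarithmic derivatives of positive solutions of $-y''+qy=0$ built by propagating $y_\pm^{(0)}$ through $\supp q_1$.

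Fix $N>0$ with $\supp q_1\subset(-N,N)$. Because $u_+^{(0)}|_{\mathbb{R}^+}\in L^1$, the solution $y_+^{(0)}$ tends to a finite positive limit $L$ at $+\infty$. Writing $q=Q'$ with $Q\in L^2_{\mathrm{loc}}(\mathbb{R})$---possible since $q\in H^{-1}_{\mathrm{loc}}$---I interpret $-y''+qy=0$ via the first-order system of Example~\ref{ex.coulomb-even} with coefficient matrix built from $Q$; feeding in the value of $L^{-1}y_+^{(0)}$ and its associated $y_2$-component at some point in $(N,\infty)$ as initial data, I obtain a unique absolutely continuous extension $f_+(\,\cdot\,,0)$ to all of~$\mathbb{R}$. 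By construction this extension solves $-y''+qy=0$ globally, agrees with $L^{-1}y_+^{(0)}$ on $(N,\infty)$, and retains the asymptotic $f_+(\,\cdot\,,0)\to 1$ at $+\infty$; hence it is the zero-energy right Jost solution for $q$, and the nonnegativity of the Schr\"odinger operator for $q$ together with Lemma~\ref{lemma.jost} forces $f_+(\,\cdot\,,0)$ to be strictly positive on~$\mathbb{R}$.

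Set $u_+:=f_+'(\,\cdot\,,0)/f_+(\,\cdot\,,0)$. This is in $L^2_{\mathrm{loc}}(\mathbb{R})$ (because $f_+'=y_2+Qy_1\in L^2_{\mathrm{loc}}$ and $f_+$ is positive and continuous), and a direct distributional computation using $f_+''=qf_+$ yields the Riccati identity $q=u_+'+u_+^2$. On $(N,\infty)$, $u_+$ coincides with $u_+^{(0)}$ and hence lies in $X_0^+$ there; on the bounded interval $[0,N]$, $L^2$-integrability automatically gives $L^1$-integrability; thus $u_+|_{\mathbb{R}^+}\in X_0^+$. On $(-\infty,-N)$ the function $f_+(\,\cdot\,,0)$ is a positive solution of $-y''+q_0 y=0$, hence a nonnegative linear combination of $y_\pm^{(0)}$, which yields the pointwise bound $|u_+|\le|u_+^{(0)}|+|u_-^{(0)}|$ on $(-\infty,-N)$; since $u_\pm^{(0)}\in L^2(\mathbb{R})$, this shows $u_+\in L^2(\mathbb{R})$. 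A symmetric construction on the left produces $u_-$ with $u_-|_{\mathbb{R}^-}\in X_0^-$ and $u_-\in L^2(\mathbb{R})$, and together the pair $(u_+,u_-)$ witnesses $q\in\mathcal{Q}$.

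The main obstacle is the rigorous propagation of a zero-energy solution of $-y''+qy=0$ through the singular region $\supp q_1$; this is where the hypothesis $q_1\in H^{-1}(\mathbb{R})$ of compact support is used, via the $L^2_{\mathrm{loc}}$ primitive $Q$ of $q$ and the first-order-system formalism of Example~\ref{ex.coulomb-even}, whose global absolutely continuous well-posedness is the key analytic input. Once the positive Jost-type extension is available, the $L^1$ and $L^2$ bounds on $u_\pm$ required for membership in $\mathcal{Q}$ follow by the routine analysis sketched above.
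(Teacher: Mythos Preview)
Your overall strategy coincides with the paper's: propagate the zero-energy Jost solution of~$q_0$ through the compact support of~$q_1$ via the first-order system of Example~\ref{ex.coulomb-even}, invoke Lemma~\ref{lemma.jost} for positivity, and take logarithmic derivatives to obtain Riccati representatives. The construction of~$u_+$ and the verification that $u_+|_{\mathbb{R}^+}\in X_0^+$ are correct and essentially identical to the paper.

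There is, however, a genuine gap in your verification that $u_+\in L^2(\mathbb{R}^-)$. You assert that on $(-\infty,-N)$ the function $f_+(\,\cdot\,,0)$, being a positive solution of $-y''+q_0y=0$ there, is ``hence a nonnegative linear combination of~$y_\pm^{(0)}$''. This inference is not valid. The convex structure $y=\theta y_+^{(0)}+(1-\theta)y_-^{(0)}$ with $\theta\in[0,1]$ parametrises $\operatorname{Pos}(q_0)$, i.e.\ solutions positive on \emph{all} of~$\mathbb{R}$; but the continuation of $f_+|_{(-\infty,-N)}$ as a solution of the \emph{unperturbed} equation to $(-N,\infty)$ need not be positive there (it solves the wrong equation on $(-N,N)$), so there is no reason the coefficient of~$y_-^{(0)}$ should be nonnegative. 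Moreover, in the exceptional case $y_+^{(0)}=y_-^{(0)}$ the two do not span the solution space at all, so the decomposition itself is unavailable. Consequently the pointwise bound $|u_+|\le|u_+^{(0)}|+|u_-^{(0)}|$ is unjustified.

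The paper sidesteps this issue entirely: once $u_+|_{\mathbb{R}^+}\in X_0^+$, $u_-|_{\mathbb{R}^-}\in X_0^-$, and $v(0)\ge0$ are known (the latter from Lemma~\ref{lemma.r1} applied to $q_0+q_1$), Lemma~\ref{lemma.r2} directly yields $q_0+q_1\in\mathcal{Q}$; the $L^2(\mathbb{R})$ membership of the full~$u_+$ is already established inside the proof of Lemma~\ref{lemma.r2} via the explicit formula for $u_+$ on~$\mathbb{R}^-$ in terms of~$w_-$ and~$\alpha$. You can close your gap by the same appeal, or alternatively by writing $f_+=\alpha y_+^{(0)}+\beta y_-^{(0)}$ on $(-\infty,-N)$ without sign assumptions and showing that $u_+-u_+^{(0)}$ decays like $(y_+^{(0)})^{-2}$, which is square-integrable since~$y_+^{(0)}$ grows at least linearly at~$-\infty$ in the generic case (and handling the exceptional case separately).
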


\begin{proof}
As explained in Example~\ref{ex.coulomb-even}, for
 $q\in H^{-1}_{\mathrm{loc}}(\mathbb{R})$
every local distributional solution of the equation $-y^{\prime\prime}+qy=0$ can be continued to the whole line and belongs to~$H^{1}_{\mathrm{loc}}(\mathbb{R})$.

Since the perturbation~$q_{1}$ is of compact support, the ``right'' Jost
solution at zero energy for the Schr\"odinger operator with potential~$q_{0}%
+q_{1}$ coincides with that for the unperturbed operator (i.e., with
potential~$q_{0}$) to the right of the support of~$q_{1}$. Therefore the
``right'' extremal Riccati representatives for $q_{0}$ and $q_{0}+q_{1}$
(equal to the logarithmic derivatives of these Jost solutions) coincide for
large~$x$. Since they belong locally to $L^{2}$, we conclude that the
restriction of the ``right'' extremal Riccati representative for $q_{0}+q_{1}$
to~$\mathbb{R}^{+}$ belongs to $X_{0}^{+}$.

Similarly, the restriction of the
``left'' extremal Riccati representative for $q_{0}+q_{1}$ to~$\mathbb{R}^{-}$
belongs to~$X_{0}^{-}$, and thus $q_{0}+q_{1} \in\mathcal{Q}$ by
Lemmas~\ref{lemma.r1} and~\ref{lemma.r2}.
\end{proof}

For instance, assume that $\alpha_{j}$ and $\beta_{j}$, $j=1,\dots,n$, are
non-negative numbers and that $x_{1}<x_{2}<\dots<x_{n}$. Take $q\in\mathcal{Q}$ and an arbitrary real-valued $\phi \in C_0^\infty(\mathbb{R})$ and set $u_j(x):= \phi(x-x_j)\log|x-x_j|$; then the potential
\[
    q + \sum_{j=1}^{n} \alpha_{j}\delta_{x_{j}}
        + \sum_{j=1}^{n} \beta_{j} (u_j' + u_j^2)
\]
also belongs to~$\mathcal{Q}$. Note that any such potential has a much simpler representation as $w'+w^2+\alpha \delta_0$ for some real-valued $w\in X$ and $\alpha\ge0$.


\section{Direct scattering}\label{sec.direct}

In this section, we prove:

\begin{theorem}
\label{thm.direct} Suppose that $q\in\mathcal{Q}_{>0}$ with Riccati
representation $\left(  w_{+},w_{-},v(0)\right)  $. Then the reflection
coefficients $r_{\pm}$ are well defined and belong to $\mathcal{R}_{>0}$.
Moreover, the maps%
\begin{equation}\label{eq.Spm}%
    \mathcal{S}_{\pm}\,:\,
        (w_{+},w_{-},v(0))  \mapsto r_{\pm}
\end{equation}
and
\begin{equation}\label{eq.tmap}%
        (w_{+},w_{-},v(0))  \mapsto \widetilde{r}_\pm,
\end{equation}
with $\widetilde{r}_{\pm}$ as in~\eqref{eq.r.tilde.def},
are continuous from~$X_{0}^{+}\times X_{0}^{-}\times\mathbb{R}^+$ to~$\widehat{X}$. The maps $\mathcal{S}_{\pm}$ are one-to-one.
\end{theorem}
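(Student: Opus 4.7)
The plan is to combine the ZS-AKNS techniques developed in Paper~I, applied twice (once for each extremal Riccati representative), with a careful low-energy analysis of the resulting scattering coefficients, in which the value $v(0)>0$ controls the leading singular behavior. First I would apply the Paper~I construction to $u_+$ on $\mathbb{R}^+$ to produce the right Jost solution $f_+(\,\cdot\,,k)$: since $u_+\in X^+$, the associated ``right'' ZS-AKNS system is well-posed and yields $f_+$ with integral representations valid on $\mathrm{Im}\,k\geq 0$ depending continuously on $w_+\in X_0^+$. Symmetrically, $u_-\in X^-$ yields $f_-$ depending continuously on $w_-\in X_0^-$. Each Jost solution is then extended across $x=0$ using the full Schr\"odinger equation, the $v(0)\delta_0$ jump in $q$ relative to $u'+u^2$ (Lemma~\ref{lemma.r1}) being absorbed by a jump of the derivative at $x=0$. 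The coefficients $a(k)$ and $b(k)$ then follow from Wronskian formulas such as $a(k) = -W\bigl(f_+(\,\cdot\,,k),f_-(\,\cdot\,,k)\bigr)/(2ik)$ obtained by combining (\ref{eq.ab+}) with the asymptotics of $f_-$ at $-\infty$.

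The heart of the argument is the behavior at $k=0$. By Example~\ref{ex.FM} and Lemma~\ref{lemma.jost}, the zero-energy Jost solutions are proportional to the extremal positive solutions, $f_\pm(\,\cdot\,,0) = f_\pm(0,0)\,y_\pm$. The constant Wronskian $W(y_+,y_-)$ evaluated at $x=0$ equals $u_-(0)-u_+(0) = v(0)$, so $W\bigl(f_+(\,\cdot\,,0),f_-(\,\cdot\,,0)\bigr) = f_+(0,0)f_-(0,0)\,v(0)$, which is nonzero precisely in the generic regime. Consequently $a$ and $b$ have simple poles at $k=0$ with residues matched up to sign; I would establish representations
\[
    a(k) = \frac{\alpha}{k} + a_0(k), \qquad b(k) = -\frac{\alpha}{k} + b_0(k),
\]
with $\alpha = \tfrac{i}{2}f_+(0,0)f_-(0,0)\,v(0)$ and remainders $a_0, b_0$ in a suitable subalgebra of $1\dotplus\widehat{X}$. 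Substitution into (\ref{eq.r+})--(\ref{eq.r-}) cancels the poles and places $r_\pm$ in $\widehat{X}$ with $r_\pm(0)=-1$, while the standard identity $|a|^2 - |b|^2 = 1$ gives $1-|r_\pm|^2 = 1/|a|^2$; combined with $k\,a(k)\to\alpha\ne 0$ as $k\to 0$, this yields $\widetilde r_\pm\in\widehat{X}$ with $\widetilde r_\pm(0)=1/|\alpha|^2 >0$.

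Continuity of $\mathcal{S}_\pm$ and of $(w_+,w_-,v(0))\mapsto \widetilde r_\pm$ will then follow by composing continuous operations: the Jost-solution construction from Paper~I, the Wronskian evaluation, the continuous dependence of $f_\pm(0,0)$ and hence $\alpha$ on the data (cf.~Remark~\ref{rem.top}), and inversion within the Banach algebra $1\dotplus\widehat{X}$, where inverses vary continuously with invertible elements. For injectivity of $\mathcal{S}_+$, the identity $1-|r_+|^2 = 1/|a|^2$ determines $|a|$; combined with analyticity of $a$ in the upper half-plane, the normalization $a(k)\to 1$ at infinity, and the fact that potentials in $\mathcal{Q}$ generate nonnegative (hence bound-state-free) Schr\"odinger operators so that no Blaschke factor appears, this determines $a$, then $b(k)=-r_+(-k)a(k)$, and the usual Marchenko reconstruction recovers $q$.

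The main obstacle is the second step: rigorously establishing the precise algebraic structure $a(k) = \alpha/k + a_0(k)$ with $a_0 \in 1\dotplus\widehat{X}$ uniformly in the data, by simultaneously exploiting the two ZS-AKNS scattering theories---for $u_+$ on $\mathbb{R}^+$ and $u_-$ on $\mathbb{R}^-$---and matching them at $x=0$ through $v(0)$. This is made delicate by the fact that neither $u_+$ nor $u_-$ is $L^1$ on the ``wrong'' half-line, so Paper~I estimates do not apply verbatim and must be supplemented by an analysis that converts $L^2$-only integrability on one side into the required Fourier-side control of the remainders $a_0, b_0$.
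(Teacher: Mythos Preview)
Your overall plan matches the paper's: construct $f_+$ from the ZS-AKNS system with $u=u_+$ and $f_-$ from the system with $u=u_-$, compute $a$ and $b$ via Wronskians, isolate the $1/k$ singularity at $k=0$, and deduce $r_\pm\in\mathcal{R}_{>0}$ from algebra in $1\dotplus\widehat X$ together with $|a|^2-|b|^2=1$. Two points deserve correction, however.

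First, the ``main obstacle'' you identify largely dissolves once you evaluate the Wronskian at $x=0$, which is exactly what the paper does (Lemma~\ref{lemma.ab}). The representation formulas \eqref{eq.f+.rep}--\eqref{eq.f+.quasi.rep} at $x=0$ depend only on $u_+\in X_c^+$ for some $c<0$, and since $u_+\in L^2(\mathbb{R})$ one has $u_+\in L^1(c,0)$ for every finite $c$; so the Paper~I estimates apply verbatim on $(c,\infty)$ and there is no ``wrong half-line'' issue. Moreover, the Jost solutions have no jump in their derivatives at $x=0$: $f_+$ solves $-y''+qy=k^2y$ globally because $q=u_+'+u_+^2$ holds as an identity in $H^{-1}(\mathbb{R})$, not just on $\mathbb{R}^+$. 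The $v(0)$ term enters instead through the mismatch of quasi-derivatives: the Wronskian $W_+$ uses $f_-'-u_+f_-$, while formula~\eqref{eq.f-.quasi.rep} gives $f_-'-u_-f_-$; their difference at $x=0$ is $v(0)f_-(0,k)$, which produces the explicit structure $a(k)=1-\widehat A_1(k)+v(0)\bigl(1-\widehat A_2(k)\bigr)/(2ik)$ with $A_j\in X$. The paper then regularizes via $\widetilde a(k)=k\,a(k)/(k+i)\in 1\dotplus\widehat X$, which is a bit cleaner than your additive decomposition $a=\alpha/k+a_0$.

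Second, your injectivity argument---recovering $a$ from $|a|$ and then invoking Marchenko reconstruction---is more circuitous than necessary and risks circularity with the inverse theory of Section~\ref{sec.inverse}. The paper instead proves injectivity directly (Proposition~\ref{prop.one-to-one}) via the Deift--Trubowitz vanishing lemma: for two potentials with the same $r_+$, the differences $h_\pm(k)=m_\pm(x,k,q_1)-m_\pm(x,k,q_2)$ lie in $H_+^2(\mathbb{R})$ and satisfy $\overline{h_\pm}+e^{\pm 2ikx}r_\pm h_\pm\in H_+^2(\mathbb{R})$ by Lemma~\ref{lemma.hardy}, forcing $h_\pm\equiv0$. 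This uses only $|r_\pm(k)|<1$ a.e.\ and is self-contained.
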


In \S \ref{subsec.jost} we review the construction of Jost solutions $f_{\pm}(\,\cdot\,,k)$ for potentials $q\in\mathcal{Q}$ and of the coefficients $a(k)$ and $b(k)$ that relate the Jost solutions $f_{\pm}(\,\cdot\,,k)$. We use the representation formulas for~$f_{\pm}(\,\cdot\,,k)$ in~\S \ref{subsec.rep} to construct and characterize the standard transmission
and reflection coefficients. We end \S \ref{subsec.rep} with a version of the
classical Levinson's theorem which states that a single reflection coefficient
determines $q\in\mathcal{Q}$ uniquely. In \S \ref{subsec.recon}, we show how
$t(k)$ may be obtained from either of $r_{\pm}$ and construct a continuous
involution~$\mathcal{I}$ on the space of reflection coefficients with $r_{\pm
}=\mathcal{I}r_{\mp}\,$.

\subsection{Jost solutions}\label{subsec.jost}

Suppose that $q\in H_{\mathrm{loc}}^{-1}(\mathbb{R})$. We say that $y\in
H_{\mathrm{loc}}^{1}(\mathbb{R})$ is a solution of the Schr\"{o}dinger
equation $-y^{\prime\prime}+qy=k^{2}y$ if
\[
    \mathfrak{s}(y,\varphi)=k^{2}\int_\mathbb{R} y \,\overline{\varphi}\,dx
\]
for all $\varphi\in C_{0}^{\infty}(\mathbb{R})$. It will be important to have
an alternative formulation.

\begin{proposition}
\label{prop.se.to.fo}Suppose that $q\in H_{\mathrm{loc}}^{-1}(\mathbb{R})$ possesses a Riccati representative $u\in L_{\mathrm{loc}}^{2}(\mathbb{R})$ and that $k\in\mathbb{C}$. A function $y\in H_{\mathrm{loc}}^{1}(\mathbb{R})$ solves the equation
\begin{equation}\label{eq.k2}
    -y^{\prime\prime}+qy=k^{2}y
\end{equation}
if and only if the vector-valued function
\[
    \binom{y}{y^{[1]}} :=\binom{y}{y^{\prime}-uy}
\]
solves the first-order system%
\begin{equation}\label{eq.fo}%
    \frac{d}{dx}\left(\begin{array} [c]{c} y\\ y^{[1]} \end{array} \right)
        =\left(\begin{array} [c]{ll} u & 1\\ -k^{2} & -u \end{array} \right)
        \left( \begin{array} [c]{c} y\\ y^{[1]} \end{array} \right) .
\end{equation}
\end{proposition}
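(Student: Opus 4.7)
The plan is to test the weak equation $\mathfrak{s}(y,\varphi) = k^2 \int y\bar\varphi\,dx$ against an arbitrary $\varphi \in C_0^\infty(\mathbb{R})$, exploit the Riccati representation $q = u' + u^2$ to integrate by parts against $u$, and recognize the resulting identity as the first-order system in disguise. The key device is the \emph{quasi-derivative}
\[
    y^{[1]} := y' - uy,
\]
which is well defined in $L^2_{\mathrm{loc}}(\mathbb{R})$: indeed $y' \in L^2_{\mathrm{loc}}$, while $uy \in L^2_{\mathrm{loc}}$ since $u \in L^2_{\mathrm{loc}}$ and $y \in H^1_{\mathrm{loc}}(\mathbb{R}) \hookrightarrow C_{\mathrm{loc}}(\mathbb{R})$.

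First, I would expand the dual pairing $\langle q, y\bar\varphi\rangle$. The product $y\bar\varphi$ lies in $H^1_{\mathrm{comp}}(\mathbb{R})$, so it is a legitimate test element on which the distribution $u' \in H^{-1}_{\mathrm{loc}}$ acts by duality; combined with the pointwise identity for $u^2 \in L^1_{\mathrm{loc}}$ one obtains
\[
    \langle q, y\bar\varphi\rangle
        = -\int u\bigl(y'\bar\varphi + y\bar\varphi'\bigr)\,dx
          + \int u^2 y\,\bar\varphi\,dx.
\]
Adding the kinetic term $\int y'\bar\varphi'\,dx$ and regrouping the four integrands around $y^{[1]}$ gives the central identity
\[
    \mathfrak{s}(y,\varphi)
        = \int y^{[1]}\,\bar\varphi'\,dx
          - \int u\,y^{[1]}\,\bar\varphi\,dx.
\]

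Second, this last identity makes the equivalence transparent. The weak equation $\mathfrak{s}(y,\varphi) = k^2 \int y\bar\varphi\,dx$ holding for every $\varphi \in C_0^\infty(\mathbb{R})$ is, by the very definition of distributional differentiation, equivalent to
\[
    \bigl(y^{[1]}\bigr)' = -u\,y^{[1]} - k^2 y
\]
in $\mathcal{D}'(\mathbb{R})$. Coupled with the tautological identity $y' = u y + y^{[1]}$ this is precisely the first-order system~\eqref{eq.fo}. Conversely, starting from the system, the right-hand side $-u y^{[1]} - k^2 y$ lies in $L^1_{\mathrm{loc}}$ by Cauchy--Schwarz and local boundedness of $y$, so $y^{[1]}$ is absolutely continuous; reversing the integration by parts recovers the weak Schr\"odinger equation.

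The only delicate point is the integration-by-parts step $\langle u', y\bar\varphi\rangle = -\int u\,(y\bar\varphi)'\,dx$, which must be justified despite $y$ only being $H^1_{\mathrm{loc}}$. This is immediate from the fact that $y\bar\varphi \in H^1_{\mathrm{comp}}(\mathbb{R})$, so the distributional pairing of $u' \in H^{-1}_{\mathrm{loc}}$ against $y\bar\varphi$ reduces to the honest $L^2$-pairing of $-u$ against $(y\bar\varphi)' \in L^2_{\mathrm{comp}}(\mathbb{R})$. Once this is in place, the proof is a direct algebraic manipulation; no further analytic obstacle arises.
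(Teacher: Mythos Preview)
Your proof is correct. The central identity
\[
    \mathfrak{s}(y,\varphi)
        = \int y^{[1]}\,\overline{\varphi}'\,dx
          - \int u\,y^{[1]}\,\overline{\varphi}\,dx
\]
is verified by the regrouping you describe, and from there the equivalence with the second row of the system is immediate (the first row being tautological). The justification of the pairing $\langle u', y\overline{\varphi}\rangle = -\int u\,(y\overline{\varphi})'\,dx$ via $y\overline{\varphi}\in H^1_{\mathrm{comp}}$ is exactly what is needed, and the observation that $y^{[1]}$ becomes absolutely continuous once the system holds is the key regularity gain.

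The paper does not actually prove this proposition: it simply cites Savchuk--Shkalikov~\cite{SS:2003} and moves on. Your argument is the standard quasi-derivative computation that underlies that reference, so you have supplied what the paper omits. There is nothing to correct.
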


The proof can be found e.g.\ in~\cite{SS:2003}. The function $y^{\left[
1\right]  }$ is called the \emph{quasi-derivative} of $y$. Note that $y^{[1]}$
is absolutely continuous, hence differentiable almost everywhere, while
$y^{\prime}$ need not even be continuous. Associated to this first-order
system is the modified Wronskian%
\begin{equation}\label{eq.W}%
    W\left\{  y_{1},y_{2}\right\}
        =y_{1}y_{2}^{\left[  1\right]  }-y_{2} y_{1}^{\left[  1\right]  }
\end{equation}
which is independent of $x$ if $y_{1}$ and $y_{2}$ are solutions
of~\eqref{eq.k2} and coincides with the ordinary Wronskian if $u$ is
absolutely continuous.

Before describing the construction of Jost solutions, we first recall the ZS-AKNS\ system and its well-known connection to the Schr\"{o}dinger equation. The ZS-AKNS\ system\footnote{This is actually a special case since we assume that the off-diagonal entries of $Q$ are equal and
real-valued. We also make a choice of the spectral parameter $z$ which is
convenient for connecting with the Schr\"{o}dinger equation.} is given by%
\begin{equation}\label{eq.AKNS}%
    \frac{d}{dx}\bm{\psi}=iz\sigma_{3}\bm{\psi}+Q\bm{\psi}
\end{equation}
where%
\begin{equation}\label{eq.sigmaQ}
    \sigma_{3}=\left( \begin{array} [c]{cc} 1 & 0\\ 0 & -1 \end{array} \right), \qquad
    Q(x)=\left(\begin{array} [c]{cc} 0 & u(x)\\ u(x) & 0 \end{array} \right).
\end{equation}
If $\bm{\psi}=\left(  \psi_{1},\psi_{2}\right)  ^{T}$ is a vector-valued solution
to (\ref{eq.AKNS}), then the scalar function%
\[
    \chi=\psi_{1}+\psi_{2}%
\]
solves the Schr\"{o}dinger equation $-\chi^{\prime\prime}+q\chi=z^{2}\chi$
with potential $q=u^{\prime}+u^{2}$. A short computation shows that the
quasi-derivative of $\chi$ is given by
\[
    \chi^{[1]}:=\chi^{\prime}-u\chi=iz(\psi_{1}-\psi_{2}).
\]

We can use this connection between ZS-AKNS systems and Schr\"{o}dinger equations, together with the representation formulas for solutions of (\ref{eq.AKNS}) derived in~\S3 of Paper I, to find representation formulas for the Jost solutions $f_{\pm}$ associated to a Miura potential $q$ and for their quasi-derivatives. Although in Paper~I it was assumed that $u_{+}=u_{-}=:u$ belongs to $L^{1}\cap L^{2}$ on the whole line, only the integrability of~$u$ on half-lines was used to construct the Jost solutions and to study their properties. Therefore it is legitimate to use the
respective results of Paper~I in the more general setting of this paper.

Assume now that $q\in\mathcal{Q}$, and let as usual $u_\pm$ be the corresponding extremal Riccati representatives belonging to $X^\pm$. Consider first the construction of the ``right'' Jost solution $f_{+}$. By the results of Paper~I the AKNS system~(\ref{eq.AKNS}) with $u=u_{+}$ and $z=k\in\mathbb{R}$ possesses a matrix-valued solution~$\Psi_{+}(\,\cdot\,,k)$ obeying the asymptotic condition%
\[
    \lim_{x\rightarrow+\infty}
        \left\vert \Psi_{+}(x,k)-\exp(ikx\sigma_{3})\right\vert =0.
\]
We then have the representation (taking $z=2k$ in the formulas of Paper I)%
\begin{equation}\label{eq.Psi+.rep}%
    \Psi_{+}(x,k)
        =\Bigl[  I + \int_{0}^{\infty}\Gamma_{+}(x,\zeta)
            \exp(2ik\zeta\sigma_{3})\,d\zeta\Bigr]  \exp(ikx\sigma_{3}),
\end{equation}
with $I$ being the $2\times2$ identity matrix and $\Gamma_{+}=(\Gamma_{jk}^{+})$ a matrix-valued function. Denoting by $X_{0}^{+}\otimes M_{2}(\mathbb{C})$ the space of~$2\times2$ matrix-valued functions on~$\mathbb{R}^{+}$ with entries
in~$X_{0}^{+}$, we get:

\begin{lemma}\label{lemma.Gamma+}
The map $x\mapsto\Gamma_{+}(x,\,\cdot\,)$ is continuous
from $\mathbb{R}$ into $X_{0}^{+}\otimes M_{2}(\mathbb{C})$ and
\begin{equation}\label{eq.Gamma+.0}
    \lim_{x\rightarrow+\infty}
        \left\Vert \Gamma_{+}(x,~\cdot~)\right\Vert
            _{X_{0}^{+}\otimes M_{2}(\mathbb{C})}=0.
\end{equation}
Moreover, the map
\[
    X_{c}^{+} \ni u_{+} \mapsto\left\{  x\mapsto\Gamma_{+}(x,\,\cdot\,)\right\}
        \in C\bigl((c,\infty);X_{0}^{+}\otimes M_{2}(\mathbb{C})\bigr)
\]
is continuous for any $c\in\mathbb{R}$.
\end{lemma}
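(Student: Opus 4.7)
The plan is to transfer the triangular representation results for Jost solutions of the ZS-AKNS system obtained in~\S3 of Paper~I to the present setting, where the Riccati representative $u_+$ is integrable only on right half-lines. The key observation is that the kernel $\Gamma_+(x,\cdot)$ in~\eqref{eq.Psi+.rep} was constructed in Paper~I as the unique solution of a Marchenko-type integral equation whose free term and kernel depend on the potential $u$ only through its restriction to $(x,\infty)$; consequently, the estimates of Paper~I depend on $u$ only through the quantity $\|u\|_{X^+_x}$. Since integrability of $u_+$ on $(c,\infty)$ is all that is needed, those arguments apply verbatim here.

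First I would recall from~\S3 of Paper~I the Marchenko equation satisfied by $\Gamma_+(x,\cdot)$ and the corresponding contraction-mapping argument, which produces a unique fixed point in $X_0^+\otimes M_2(\mathbb{C})$ together with a quantitative bound
\[
    \|\Gamma_+(x,\cdot)\|_{X_0^+\otimes M_2(\mathbb{C})}
        \le F\bigl(\|u_+\|_{X_x^+}\bigr)
\]
for a continuous nondecreasing function $F$ with $F(0)=0$. Since $\|u_+\|_{X_x^+}\to 0$ as $x\to+\infty$ by absolute continuity of the Lebesgue integral, this bound immediately yields the decay statement~\eqref{eq.Gamma+.0}. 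Continuity of the map $x\mapsto\Gamma_+(x,\cdot)$ at any $x_0\in\mathbb{R}$ follows from the continuous dependence of the free term and the kernel of the Marchenko equation on the translation parameter $x$ in the topology of $X_0^+\otimes M_2(\mathbb{C})$, combined with the contraction estimate that transfers continuity from the data to the fixed point.

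For the joint-continuity claim, I would take two potentials $u_+,\widetilde{u}_+\in X_c^+$, subtract the Marchenko equations satisfied by the associated kernels $\Gamma_+$ and $\widetilde{\Gamma}_+$, and apply the contraction estimate on the bounded set $\{u_+\in X_c^+ : \|u_+\|_{X_c^+}\le R\}$ to obtain a Lipschitz-type bound
\[
    \sup_{x\ge c}\|\Gamma_+(x,\cdot)-\widetilde{\Gamma}_+(x,\cdot)\|_{X_0^+\otimes M_2(\mathbb{C})}
        \le C(R)\,\|u_+-\widetilde{u}_+\|_{X_c^+},
\]
which is exactly the required continuity from $X_c^+$ to $C\bigl((c,\infty);X_0^+\otimes M_2(\mathbb{C})\bigr)$.

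The main technical obstacle is to verify carefully that the estimates of Paper~I localize to $(x,\infty)$ in the way claimed, i.e., that at no point does the construction of $\Gamma_+(x,\cdot)$ invoke values of $u$ to the left of $x$. This is a matter of re-reading the proofs of~\S3 in Paper~I with the triangular structure of~\eqref{eq.Psi+.rep} in mind: the $\zeta$-variable in $\Gamma_+(x,\zeta)$ is supported on $\zeta\ge 0$, so all integral kernels appearing in the Marchenko equation involve only the tail $u_+|_{(x,\infty)}$. Once this localization is confirmed, the three assertions of the lemma are immediate consequences of the localized versions of the corresponding results in Paper~I.
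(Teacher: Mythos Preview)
Your proposal is correct and takes essentially the same approach as the paper: both defer to the proof of Proposition~3.5 in \S3.2 of Paper~I, and both rely on the observation (stated explicitly in the paragraph preceding the lemma) that the construction of $\Gamma_+(x,\cdot)$ and the accompanying estimates use only the restriction of $u_+$ to $(x,\infty)$, so that the hypothesis $u\in L^1(\mathbb{R})$ of Paper~I can be weakened to $u_+\in X^+$. The paper's proof is in fact just a one-line citation of Paper~I; your write-up supplies the localization rationale more explicitly but is otherwise identical in substance.
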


\begin{proof}
These statements follow from the proof of Proposition 3.5 in \S 3.2 of
Paper~I.
\end{proof}

Write%
\[
    \Psi_{+}(x,k) = \left(\begin{array} [c]{cc}%
        \psi_{11}^{+}(x,k) & \psi_{12}^{+}(x,k)\\
        \psi_{21}^{+}(x,k) & \psi_{22}^{+}(x,k)
                \end{array} \right)
\]
and let
\[
    f_{+}(x,k)=\psi_{11}^{+}(x,k)+\psi_{21}^{+}(x,k).
\]
Then the function~$f_+(\,\cdot\,,k)$ solves the Schr\"{o}dinger equation $-y''+q y=k^2y$ (recall that~$q = u_+' + u_+^2$ by assumption), and from Lemma~\ref{lemma.Gamma+} and the constancy of the Wronskian~(\ref{eq.W}) for solutions, we immediately obtain:

\begin{lemma}
\label{lemma.f+}The representation formulas%
\begin{align}
f_{+}(x,k)  &  =e^{ikx} \left(  1+\int_{0}^{\infty}
    \bigl[\Gamma_{11}^{+}(x,\zeta)+\Gamma_{21}^{+}(x,\zeta)\bigr]
        e^{2ik\zeta}d\zeta\right),\label{eq.f+.rep}\\
f_{+}^{[1]}(x,k)  &  =ik e^{ikx} \left( 1+\int_{0}^{\infty}
    \bigl[\Gamma_{11}^{+}(x,\zeta)-\Gamma_{21}^{+}(x,\zeta)\bigr]
        e^{2ik\zeta}\,d\zeta\right)  \label{eq.f+.quasi.rep}%
\end{align}
hold. Moreover, denoting by $W_{+}$ the modified Wronskian (\ref{eq.W}) with
$u=u_{+}$, we get
\begin{equation}
\label{eq.W+}W_{+}\left\{  f_{+}(x,k),f_{+}(x,-k)\right\}  =-2ik.
\end{equation}
\end{lemma}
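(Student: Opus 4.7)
The approach is to derive \eqref{eq.f+.rep} and \eqref{eq.f+.quasi.rep} by reading off the first column of the matrix representation \eqref{eq.Psi+.rep}, and then to establish \eqref{eq.W+} by computing the modified Wronskian in the limit $x\to+\infty$, where its value is already known to be $x$-independent.

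First, I would expand \eqref{eq.Psi+.rep} componentwise. Since $\exp(2ik\zeta\sigma_{3})=\operatorname{diag}(e^{2ik\zeta},e^{-2ik\zeta})$ and $\exp(ikx\sigma_{3})=\operatorname{diag}(e^{ikx},e^{-ikx})$, the first column of $\Psi_{+}(x,k)$ is
\[
    \psi_{11}^{+}(x,k) = e^{ikx}\Bigl(1+\int_{0}^{\infty}\Gamma_{11}^{+}(x,\zeta)\,e^{2ik\zeta}\,d\zeta\Bigr),
    \qquad
    \psi_{21}^{+}(x,k) = e^{ikx}\int_{0}^{\infty}\Gamma_{21}^{+}(x,\zeta)\,e^{2ik\zeta}\,d\zeta.
\]
Adding these two gives \eqref{eq.f+.rep}. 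For \eqref{eq.f+.quasi.rep}, I would invoke the identity $\chi^{[1]}=ik(\psi_{1}-\psi_{2})$ for $\chi=\psi_{1}+\psi_{2}$ recorded in the text after \eqref{eq.sigmaQ}; applying it with $(\psi_{1},\psi_{2})=(\psi_{11}^{+},\psi_{21}^{+})$ and substituting the displayed formulas yields the claimed integral representation for $f_{+}^{[1]}$.

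For the Wronskian identity, both $f_{+}(\,\cdot\,,k)$ and $f_{+}(\,\cdot\,,-k)$ solve the same equation $-y''+qy=k^{2}y$, so by Proposition~\ref{prop.se.to.fo} the quantity $W_{+}\{f_{+}(x,k),f_{+}(x,-k)\}$ is independent of $x$, and it suffices to evaluate its limit as $x\to+\infty$. By Lemma~\ref{lemma.Gamma+} and \eqref{eq.Gamma+.0}, the $L^{1}(0,\infty)$-norms of $\Gamma_{11}^{+}(x,\,\cdot\,)\pm\Gamma_{21}^{+}(x,\,\cdot\,)$ tend to zero as $x\to+\infty$; hence the oscillatory integrals in \eqref{eq.f+.rep} and \eqref{eq.f+.quasi.rep} are dominated in absolute value by these $L^{1}$-norms and vanish uniformly in real~$k$. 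It follows that $f_{+}(x,\pm k)=e^{\pm ikx}(1+o(1))$ and $f_{+}^{[1]}(x,\pm k)=\pm ik\,e^{\pm ikx}(1+o(1))$ as $x\to+\infty$, so a direct substitution gives
\[
    W_{+}\{f_{+}(x,k),f_{+}(x,-k)\}
        \longrightarrow e^{ikx}(-ik)e^{-ikx}-e^{-ikx}(ik)e^{ikx}=-2ik,
\]
which is \eqref{eq.W+}.

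There is no real technical obstacle here: the existence of the kernel $\Gamma_{+}$ and the decay \eqref{eq.Gamma+.0} are already supplied by Paper~I, and the constancy in $x$ of the modified Wronskian is contained in Proposition~\ref{prop.se.to.fo}. The only minor points worth verifying are that the scalar combination $\psi_{11}^{+}+\psi_{21}^{+}$ indeed satisfies the Schr\"odinger equation with potential $q=u_{+}'+u_{+}^{2}$ (which is the AKNS-to-Schr\"odinger reduction noted around \eqref{eq.sigmaQ}, valid in the present setting because the kernel was built with $u=u_{+}$) and that $L^{1}$-decay of $\Gamma_{+}(x,\,\cdot\,)$ in $\zeta$ controls the oscillatory integrals uniformly in $k\in\mathbb{R}$; both are immediate.
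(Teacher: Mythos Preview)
Your proposal is correct and follows essentially the same approach as the paper, which simply states that the lemma follows ``from Lemma~\ref{lemma.Gamma+} and the constancy of the Wronskian~(\ref{eq.W}) for solutions''; you have merely written out the details of that two-line justification, reading off the first column of~\eqref{eq.Psi+.rep}, applying the identity $\chi^{[1]}=ik(\psi_1-\psi_2)$, and evaluating the constant Wronskian at $x\to+\infty$ via the decay~\eqref{eq.Gamma+.0}.
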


This shows that the function~$f_+(\,\cdot\,,k)$ constructed above is indeed the ``right'' Jost solution for the potential~$q\in\mathcal{Q}$. For every fixed~$x\in\mathbb{R}$, we use equalities~\eqref{eq.f+.rep}--\eqref{eq.f+.quasi.rep} to analytically continue $f_+(x,k)$ and $f^{[1]}(x,k)$ for~$k$ in the open complex upper-half plane~$\mathbb{C}^+$. The function~$f_+(\,\cdot\,,k)$ so continued is the Jost solution for the potential~$q$ for all~$k$ in the closed complex upper-half plane~$\overline{\mathbb{C}^{+}}$ and satisfies for such~$k$ the asymptotic condition%
\begin{equation}
    \lim_{x\rightarrow+\infty}
        \left\vert \left(\begin{array}[c]{c}%
            y(x)\\ y^{[1]}(x) \end{array} \right)
        -\left(\begin{array} [c]{c}%
            e^{ikx}\\ ike^{ikx} \end{array}\right)  \right\vert
        =0. \label{eq.Jost+}%
\end{equation}

Similarly, let $\Psi_{-}(x,k)$ be the matrix solution of (\ref{eq.AKNS})\ with
$u=u_{-}$, the Riccati representative of $q$ with $u_{-}\in X^{-}$, that obeys
the asymptotic condition
\[
    \lim_{x\rightarrow-\infty}
        \left\vert \Psi_{-}(x,k)-\exp(ikx\sigma_{3})\right\vert =0.
\]
We then have the representation%
\begin{equation}\label{eq.Psi-.rep}
    \Psi_{-}(x,k)
        = \left(  I+\int_{-\infty}^{0}\Gamma_{-}(x,\zeta)
                \exp(2ik\zeta\sigma_{3})\,d\zeta\right)  \exp(ikx\sigma_{3})
\end{equation}
(taking $z=2k$ in the formulas of Paper~I). In analogy to
Lemma~\ref{lemma.Gamma+}, we have:

\begin{lemma}
\label{lemma.Gamma-}The map $x\mapsto\Gamma_{-}(x,~\cdot~)$ is continuous from
$\mathbb{R}$ into $X_{0}^{-}\otimes M_{2}(\mathbb{C})$ and
\begin{equation}
    \lim_{x\rightarrow-\infty}
        \left\Vert \Gamma_{-}(x,\,\cdot\,)\right\Vert
            _{X_{0}^{-}\otimes M_{2}(\mathbb{C})} = 0. \label{eq.Gamma-.0}%
\end{equation}
Moreover, the map
\[
    X_{c}^{-} \ni u_{-} \mapsto\left\{  x\mapsto\Gamma_{-}(x,\,\cdot\,)\right\}
            \in C\bigl((-\infty,c);X_{0}^{-}\otimes M_{2}(\mathbb{C})\bigr)
\]
is continuous for any $c\in\mathbb{R}$.
\end{lemma}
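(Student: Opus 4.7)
The plan is to reduce the left-case assertion to the right-case assertion already established in Lemma~\ref{lemma.Gamma+}, using the obvious reflection symmetry of the ZS-AKNS system. Define the reflected potential $\tilde u(x) := -u_{-}(-x)$, which belongs to $X^{+}$ since $u_{-} \in X^{-}$, and similarly $\tilde u \in X_{-c}^{+}$ whenever $u_{-} \in X_{c}^{-}$. The map $u_{-} \mapsto \tilde u$ is an isometric isomorphism between the relevant function spaces, so continuity statements transfer in both directions.

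The next step is to check that, if $\Psi_{+}(x,k;\tilde u)$ denotes the matrix solution of~\eqref{eq.AKNS} for $\tilde u$ normalized at $+\infty$ as in~\eqref{eq.Psi+.rep}, then
\[
    \Psi_{-}(x,k; u_{-}) = \sigma_{1}\,\Psi_{+}(-x,-k;\tilde u)\,\sigma_{1},
\]
where $\sigma_{1} = \bigl(\begin{smallmatrix} 0 & 1 \\ 1 & 0 \end{smallmatrix}\bigr)$; this follows from differentiating the right-hand side and using $\sigma_{1}\sigma_{3}\sigma_{1} = -\sigma_{3}$ together with the matching asymptotics at $x \to -\infty$ (by uniqueness of such normalized solutions, which is standard for ZS-AKNS). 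Substituting the representation~\eqref{eq.Psi+.rep} for $\Psi_{+}(-x,-k;\tilde u)$ and comparing with~\eqref{eq.Psi-.rep} yields an explicit formula
\[
    \Gamma_{-}(x,\zeta;u_{-}) = \sigma_{1}\,\Gamma_{+}(-x,-\zeta;\tilde u)\,\sigma_{1},
\]
valid for $\zeta < 0$.

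Given this identity, the three assertions of Lemma~\ref{lemma.Gamma-} are immediate translations of the corresponding assertions of Lemma~\ref{lemma.Gamma+}: continuity of $x \mapsto \Gamma_{-}(x,\,\cdot\,)$ from $\mathbb{R}$ into $X_{0}^{-}\otimes M_{2}(\mathbb{C})$ follows from the continuity of $x \mapsto \Gamma_{+}(-x,\,\cdot\,)$, the decay~\eqref{eq.Gamma-.0} as $x \to -\infty$ follows from~\eqref{eq.Gamma+.0} as $-x \to +\infty$, and the continuous dependence on $u_{-} \in X_{c}^{-}$ follows from the continuous dependence of $\Gamma_{+}$ on $\tilde u \in X_{-c}^{+}$ together with the isometric nature of the reflection map.

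The main thing to verify carefully is the reflection identity relating $\Psi_{-}$ and $\Psi_{+}$; everything else is a direct quotation of Paper~I's results. Alternatively, if one prefers not to invoke the symmetry, one can simply re-run the fixed-point argument from §3.2 of Paper~I verbatim, replacing the forward integration from $+\infty$ with backward integration from $-\infty$; the estimates are identical because $u_{-}$ satisfies the same integrability hypothesis on $\mathbb{R}^{-}$ that $u_{+}$ satisfies on $\mathbb{R}^{+}$.
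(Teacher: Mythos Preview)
Your approach is sound and, in fact, more explicit than what the paper does: the paper gives no proof of Lemma~\ref{lemma.Gamma-} at all, merely introducing it with ``In analogy to Lemma~\ref{lemma.Gamma+}, we have''. Your reflection argument is a clean way to make that analogy precise, and your fallback (re-running the fixed-point argument of \S3.2 of Paper~I with integration from $-\infty$) is exactly what the paper has in mind.

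One minor slip: the intermediate identity should read
\[
    \Psi_{-}(x,k;u_{-}) \;=\; \sigma_{1}\,\Psi_{+}(-x,\,k;\tilde u)\,\sigma_{1},
\]
with $k$ rather than $-k$. You can see this either from the asymptotics (with $-k$ the right-hand side tends to $e^{-ikx\sigma_3}$ as $x\to-\infty$, not $e^{ikx\sigma_3}$) or by differentiating: conjugation by $\sigma_1$ already flips the sign of the $\sigma_3$ term via $\sigma_1\sigma_3\sigma_1=-\sigma_3$, so no further sign change in $k$ is needed. Fortunately your stated $\Gamma$-identity
\[
    \Gamma_{-}(x,\zeta;u_{-}) \;=\; \sigma_{1}\,\Gamma_{+}(-x,-\zeta;\tilde u)\,\sigma_{1}
\]
is correct (it follows from the corrected $\Psi$-identity after substituting~\eqref{eq.Psi+.rep} and changing variables $\zeta\mapsto-\zeta$), and that is all you actually use. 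With this correction the argument goes through.
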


Writing
\[
    \Psi_{-}(x,k)
     =\left(\begin{array}[c]{cc}%
    \psi_{11}^{-}(x,k) & \psi_{12}^{-}(x,k)\\
    \psi_{21}^{-}(x,k) & \psi_{22}^{-}(x,k)
        \end{array}\right)
\]
and taking
\[
    f_{-}(x,k)=\psi_{12}^{-}(x,k)+\psi_{22}^{-}(x,k)
\]
we find that $f_-(\,\cdot\,,k)$ is a solution of the equation~$-y''+q y=k^2y$ (recall that~$q=u_-'+u_-^2$ by assumption) and obtain the obvious analogue of Lemma \ref{lemma.f+}.

\begin{lemma}
\label{lemma.f-}The representation formulas%
\begin{align}
f_{-}(x,k)  &  =e^{-ikx}\left(  1+\int_{-\infty}^{0}
    \bigl[ \Gamma_{12}^{-}(x,\zeta)+\Gamma_{22}^{-}(x,\zeta)\bigr]  e^{-2ik\zeta}d\zeta\right) ,\label{eq.f-.rep}\\
f_{-}^{\left[  1\right]  }(x,k)  &  =-ike^{-ikx}\left( 1+\int_{-\infty}^{0}
    \bigl[ \Gamma_{12}^{-}(x,\zeta)-\Gamma_{22}^{-}(x,\zeta) \bigr]
    e^{-2ik\zeta}d\zeta\right)  \label{eq.f-.quasi.rep}%
\end{align}
hold. Moreover, if $W_{-}$ denotes the modified Wronskian (\ref{eq.W}) with
$u=u_{-}$, then
\[
    W_{-}\left\{  f_{-}(x,k),f_{-}(x,-k)\right\}  =2ik.
\]
\end{lemma}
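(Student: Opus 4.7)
The plan is to carry out exactly the same calculation as in the proof of Lemma~\ref{lemma.f+}, now using the representation \eqref{eq.Psi-.rep} and extracting the second column of~$\Psi_-$ rather than the first column of~$\Psi_+$. Multiplying \eqref{eq.Psi-.rep} out and identifying the two entries of the second column gives
\[
    \psi_{12}^-(x,k) = e^{-ikx}\int_{-\infty}^0 \Gamma_{12}^-(x,\zeta)\,e^{-2ik\zeta}\,d\zeta,
\]
\[
    \psi_{22}^-(x,k) = e^{-ikx}\Bigl(1 + \int_{-\infty}^0 \Gamma_{22}^-(x,\zeta)\,e^{-2ik\zeta}\,d\zeta\Bigr),
\]
and summing these two expressions yields \eqref{eq.f-.rep} directly.

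For the quasi-derivative I would invoke the AKNS--Schr\"odinger correspondence from~\S\ref{subsec.jost}: when $\bm{\psi} = (\psi_1,\psi_2)^T$ solves \eqref{eq.AKNS} with spectral parameter~$z$, the sum $\chi = \psi_1 + \psi_2$ satisfies $-\chi'' + q\chi = z^2\chi$ and its quasi-derivative is $\chi^{[1]} = iz(\psi_1 - \psi_2)$. Applying this with $z = k$ to the second column of~$\Psi_-$ gives $f_-^{[1]}(x,k) = ik\bigl(\psi_{12}^-(x,k) - \psi_{22}^-(x,k)\bigr)$; substituting the integral representations just derived and simplifying then produces \eqref{eq.f-.quasi.rep}.

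For the Wronskian identity, Proposition~\ref{prop.se.to.fo} ensures that both $f_-(\,\cdot\,,k)$ and $f_-(\,\cdot\,,-k)$ solve the Schr\"odinger equation with Riccati representative $u = u_-$, so $W_-\{f_-(\,\cdot\,,k), f_-(\,\cdot\,,-k)\}$ is independent of~$x$. I would evaluate it by passing to the limit $x \to -\infty$: using \eqref{eq.Gamma-.0} together with the boundedness of the oscillatory factors $e^{\mp 2ik\zeta}$ on~$\mathbb{R}^-$ for real~$k$, the integral corrections in \eqref{eq.f-.rep}--\eqref{eq.f-.quasi.rep} vanish in the limit, so that $f_-(x,\pm k) \to e^{\mp ikx}$ and $f_-^{[1]}(x,\pm k) \to \mp ik\,e^{\mp ikx}$. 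The elementary computation of the Wronskian of $e^{-ikx}$ and $e^{ikx}$ then gives the constant value $2ik$.

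The only step requiring any genuine analytic care is the limit in the Wronskian argument, but this is immediate from $\|\Gamma_-(x,\,\cdot\,)\|_{X_0^- \otimes M_2(\mathbb{C})} \to 0$ in \eqref{eq.Gamma-.0} together with the trivial $L^1$-bound on the oscillatory integrals. Everything else is purely algebraic and literally mirrors the proof of Lemma~\ref{lemma.f+}; no new ideas are needed beyond the observation that the roles of ``first column / $+\infty$ / $u_+$'' and ``second column / $-\infty$ / $u_-$'' are interchanged.
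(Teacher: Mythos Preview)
Your proposal is correct and follows precisely the approach the paper intends: the paper states Lemma~\ref{lemma.f-} as ``the obvious analogue of Lemma~\ref{lemma.f+}'' without giving a separate proof, and your derivation---reading off the second column of~\eqref{eq.Psi-.rep}, applying the AKNS--Schr\"odinger correspondence $\chi^{[1]}=ik(\psi_1-\psi_2)$, and evaluating the constant Wronskian via the asymptotics at $-\infty$ supplied by~\eqref{eq.Gamma-.0}---is exactly that analogue spelled out.
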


Clearly, $f_-(\,\cdot\,,k)$ is the ``left'' Jost solution corresponding to the potential~$q\in\mathcal{Q}$. The analytic extension of~$f_-(x,k)$ for $k\in\mathbb{C}^+$ by means of~\eqref{eq.f-.rep} gives the Jost solution for the potential~$q$ in the closed complex upper-half plane and satisfies for~$k\in \overline{\mathbb{C}^{+}}$ the asymptotic condition
\begin{equation}
    \lim_{x\rightarrow-\infty}
        \left\vert \left(\begin{array} [c]{c}%
            y(x)\\ y^{[1]}(x) \end{array} \right)
        -\left(\begin{array}[c]{c}%
            e^{-ikx}\\ -ike^{-ikx} \end{array}\right)  \right\vert =0. \label{eq.Jost-}%
\end{equation}

It will be important in the analysis of the inverse problem to have the
following equations of Gelfand--Levitan--Marchenko type for $\Gamma_{\pm}$ derived in Proposition 3.8 of Paper~I. Set%
\begin{align}
    F_{+}(x) &  =\frac{1}{\pi}
        \int_{-\infty}^{\infty}e^{2ikx}r_{+}(k)\,dk,\label{eq.F+}\\
    F_{-}(x) &  =\frac{1}{\pi}
        \int_{-\infty}^{\infty}e^{-2ikx}r_{-}(k)\,dk\label{eq.F-}%
\end{align}
(note the slightly different convention for the Fourier transform than in
Paper I owing to the change of variables $s=2k$, and note that $F_{\pm}$ are
real-valued in our case) and define the matrix-valued functions
\[
    \Omega_{-}(x):= \begin{pmatrix} 0 & F_{-}(x)\\ F_{-}(x) & 0 \end{pmatrix},
        \qquad
    \Omega_{+}(x):= \begin{pmatrix} 0 & F_{+}(x)\\ F_{+}(x) & 0 \end{pmatrix}.
\]
Then Proposition~3.8 in Paper~I gives the following relations between
$\Gamma_{\pm}$ and $\Omega_{\pm}$:
\begin{align}
\Gamma_{-}(x,\zeta) + \Omega_{-}(x+\zeta)
    + \int_{-\infty}^{0}\Gamma_{-}(x,t)\,\Omega_{-}(x+t+\zeta)\,dt
    &  =0,\quad\zeta<0,\label{eq.GLM-}\\
\Gamma_{+}(x,\zeta) + \Omega_{+}(x+\zeta)
    + \int_{0}^{\infty}\Gamma_{+}(x,t)\,\Omega_{+}(x+t+\zeta)\,dt
    &  =0,\quad\zeta>0.\label{eq.GLM+}%
\end{align}

Finally, if we write%
\begin{align}
    f_{+}(x,k)  &  =m_{+}(x,k)e^{ikx},\label{eq.fm+}\\
    f_{-}(x,k)  &  =m_{-}(x,k)e^{-ikx}, \label{eq.fm-}%
\end{align}
then (\ref{eq.f+.rep}) and (\ref{eq.f-.rep}) imply that%
\begin{align}
    m_{+}(x,k)  &  =1+\int_{0}^{\infty}K_{+}(x,\zeta)e^{2ik\zeta}\,d\zeta,
        \label{eq.m+}\\
    m_{-}(x,k)  &  =1+\int_{-\infty}^{0}K_{-}(x,\zeta)e^{-2ik\zeta}\,d\zeta,
        \label{eq.m-}%
\end{align}
where%
\begin{align*}
    K_{+}(x,\zeta)  &  =\Gamma_{11}^{+}(x,\zeta)+\Gamma_{21}^{+}(x,\zeta),\\
    K_{-}(x,\zeta)  &  =\Gamma_{12}^{-}(x,\zeta)+\Gamma_{22}^{-}(x,\zeta).
\end{align*}
Therefore, by passing to Fourier transforms of (\ref{eq.GLM-})--(\ref{eq.GLM+}) in the usual way, we immediately obtain:

\begin{lemma}\label{lemma.hardy}
For each fixed $x\in\mathbb{R}$, the functions
\[
    m_{-}(x,-k)+e^{-2ikx}r_{-}(k)m_{-}(x,k)
\]
and
\[
    m_{+}(x,-k)+e^{ 2ikx}r_{+}(k)m_{+}(x,k)
\]
of variable~$k$ belong to~$H_{+}^{2}(\mathbb{R})$.
\end{lemma}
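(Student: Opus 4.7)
The plan is to derive the claim by taking Fourier transforms in~$\zeta$ of the Gelfand--Levitan--Marchenko equations (\ref{eq.GLM+}) and (\ref{eq.GLM-}), exploiting the fact that each equation holds only on a half-line and therefore encodes a Hardy space support condition after transform.

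The first move is to reduce the matrix GLM identity (\ref{eq.GLM+}) to a scalar Marchenko-type equation. Since $\Omega_+$ is antidiagonal with entry~$F_+$, adding the $(1,1)$ and $(2,1)$ entries produces
\[
    K_+(x,\zeta) + F_+(x+\zeta)
        + \int_0^\infty L_+(x,t)\, F_+(x+t+\zeta)\,dt = 0, \qquad \zeta > 0,
\]
where $L_+(x,t) := \Gamma^+_{12}(x,t) + \Gamma^+_{22}(x,t)$. A crucial step is the identification $L_+ = K_+$: the second column of $\Psi_+(\,\cdot\,,k)$ also solves the ZS-AKNS system with $u = u_+$, so its entrywise sum is a Schr\"odinger solution behaving like $e^{-ikx}$ at $+\infty$ and hence coincides with $f_+(\,\cdot\,,-k)$; comparing with (\ref{eq.f+.rep}) evaluated at~$-k$ then forces $L_+(x,\zeta) = K_+(x,\zeta)$.

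Extending $K_+(x,\zeta)$ by zero to $\zeta \le 0$, the left-hand side of the resulting scalar equation is now defined on all of~$\mathbb R$ and vanishes for $\zeta > 0$, so its Fourier transform in~$\zeta$ belongs to~$H^2_-(\mathbb R)$. Using $\widehat{K_+(x,\cdot)}(k) = m_+(x,k) - 1$, the translation rule $\widehat{F_+(x+\cdot)}(k) = e^{-2ikx}\widehat{F_+}(k)$ with $\widehat{F_+}(k) = r_+(-k)$, and Fubini in the convolution-type term, a direct calculation identifies this Fourier transform with
\[
    m_+(x,k) - 1 + e^{-2ikx}\, r_+(-k)\, m_+(x,-k).
\]
Replacing $k$ by $-k$, which swaps $H^2_+(\mathbb R)$ and $H^2_-(\mathbb R)$, yields the claim for~$m_+$. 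The argument for~$m_-$ is analogous: add the $(1,2)$ and $(2,2)$ entries of (\ref{eq.GLM-}), identify $L_- = K_-$ through $f_-(\,\cdot\,,-k)$, and note that after extending $K_-$ by zero to $\zeta \ge 0$ the left-hand side is already supported on $\zeta \ge 0$, so its Fourier transform lies in~$H^2_+(\mathbb R)$ without further substitution.

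The main obstacle is the scalar reduction $L_\pm = K_\pm$: without it the matrix GLM equations only couple two a priori distinct kernels, and the Hardy space argument falls apart. This identification is precisely what the Miura--Schr\"odinger correspondence supplies, allowing two combinations of AKNS transformation kernels to be recognized as the single Schr\"odinger Marchenko kernel. Once this is in place the remainder is a routine Fourier computation based only on the support properties of $K_\pm$ and the evaluations $\widehat{F_\pm}(k) = r_\pm(\mp k)$.
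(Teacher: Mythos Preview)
Your argument is correct and follows exactly the route the paper indicates (``passing to Fourier transforms of (\ref{eq.GLM-})--(\ref{eq.GLM+}) in the usual way''): reduce to the scalar Marchenko equation for $K_\pm$, use the half-line support to land in the appropriate Hardy space, and compute the transform. Two minor remarks: the identification $L_+=K_+$ you extract via the second column of $\Psi_+$ and the Jost solution $f_+(\,\cdot\,,-k)$ is equivalently the kernel symmetry $\Gamma^+_{12}=\Gamma^+_{21}$, $\Gamma^+_{11}=\Gamma^+_{22}$ coming from the AKNS symmetry $\sigma_1 Q\sigma_1=Q$ (compare the analogous remark after Proposition~\ref{pro.Gamma}), so this step is lighter than you suggest; and your computation in fact produces $m_+(x,-k)+e^{2ikx}r_+(k)m_+(x,k)-1\in H^2_+(\mathbb{R})$, which is the precise version quoted in the proof of Lemma~\ref{lemma.r.hardy} and is all that is needed in Proposition~\ref{prop.one-to-one} where only differences of such expressions appear.
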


We conclude this subsection with the observation that
if $q$ is a Miura potential with extremal
Riccati representatives~$u_{\pm}\in X^{\pm}$, then the Jost solutions at zero
energy are given by%
\begin{align*}
    f_{+}(x,0)  &  =\exp\left( - \int_{x}^{\infty}u_{+}(y)\,dy\right)  ,\\
    f_{-}(x,0)  &  =\exp\left(   \int_{-\infty}^{x}u_{-}(y)\,dy\right)
\end{align*}
and, conversely,%
\begin{align}
    u_{+}(x)  &  =\frac{f_{+}^{\prime}(x,0)}{f_{+}(x,0)},\label{eq.u+.jost}\\
    u_{-}(x)  &  =\frac{f_{-}^{\prime}(x,0)}{f_{-}(x,0)}. \label{eq.u-.jost}%
\end{align}

\subsection{Representation formulas for the scattering coefficients}
\label{subsec.rep}

The transmission and reflection coefficients can be computed from Wronskians
of the Jost solution, with due care given to the fact that $W_{+}$ and $W_{-}$
are \emph{different} modified Wronskians corresponding to \emph{distinct}
Riccati representatives of $q$ if $u_{+}\neq u_{-}$.

Recall that the coefficients $a(k)$ and $b(k)$ were defined by the usual
relations~(\ref{eq.ab+}) or \eqref{eq.ab-}. Using~\eqref{eq.W+}, one derives the formulas
\begin{align}
    a(k) &  =\frac{W_{+}\left\{  f_{-}(x,k),f_{+}(x,k)\right\}  }{2ik},
        \label{eq.a.wronski}\\
    b(k) &  =-\frac{W_{+}\left\{  f_{-}(x,k),f_{+}(x,-k)\right\}}{2ik}
        \label{eq.b.wronski}%
\end{align}
for real nonzero~$k$. Since $\overline{f_{\pm}(\cdot,k)}=f_{\pm}(\cdot,-k)$ for
such~$k$ and since $u$ is real-valued, it follows that $a$ and $b$ have the
property
\begin{align}
    \overline{a(k)} &  =a(-k),\label{eq.abar}\\
    \overline{b(k)} &  =b(-k),\label{eq.bbar}%
\end{align}
while, by standard arguments,%
\begin{equation}\label{eq.ab1}%
    \left\vert a(k)\right\vert ^{2}-\left\vert b(k)\right\vert ^{2}=1.
\end{equation}
We also note that~\eqref{eq.a.wronski} can be used to extend $a$ analytically
to the open upper-half plane~$\mathbb{C}^{+}$. The function~$a$ so defined is
continuous in~$\overline{\mathbb{C}^{+}}\setminus\{0\}$ and has no zeros
there. Indeed, $a(k)\neq0$ for $k\in\mathbb{R}\setminus\{0\}$ due
to~\eqref{eq.ab1}. Assume that there is $k\in\mathbb{C}^{+}$ such that
$a(k)=0$. Then the Jost solutions $f_{+}(\cdot,k)$ and $f_{-}(\cdot,k)$ are
linearly dependent and thus equation~\eqref{eq.k2} has a solution that decays
exponentially at $\pm\infty$. Therefore $k^{2}\in\mathbb{C}\setminus
\mathbb{R}^{+}$ is an eigenvalue of the Schr\"{o}dinger operator~$S$, which is
impossible in view of non-negativity of the latter.

Using the Wronskian formulas (\ref{eq.a.wronski})--(\ref{eq.b.wronski}) at
$x=0$, together with the representation formulas (\ref{eq.f+.rep})--(\ref{eq.f+.quasi.rep}) and (\ref{eq.f-.rep})--(\ref{eq.f-.quasi.rep}), we
derive the following representation for~$a$ and~$b$ (recall that $\widehat f$
stands for the Fourier transform of a function~$f$ normalized by~\eqref{eq.Fourier}).

\begin{lemma}
\label{lemma.ab} Suppose that $q\in\mathcal{Q}$. Then the coefficients $a$ and $b$ admit the representation
\begin{align}
    a(k)&=1-\widehat{A}_{1}(k)+v(0)\left[  \frac{1-\widehat{A}_{2}(k)}{2ik}\right],
\label{eq.ak} \\
    b(k)&=\widehat{B}_{1}(k)-v(0)\left[  \frac{1-\widehat{B}_{2}(k)}{2ik}\right],
\label{eq.bk}%
\end{align}
in which $A_{j}$ and $B_{j}$, $j=1,2,$, are real-valued functions in~$X$, with~$A_{i}$ supported on~$[0,\infty)$. Moreover, $A_2 = B_2 = 0$ if $q\in\mathcal{Q}_0$ and
\[
    1-\widehat{A}_{2}(0)=1-\widehat{B}_{2}(0)=f_{+}(0,0)f_{-}(0,0)
\]
is nonzero if $q\in\mathcal{Q}_{>0}$. The maps $q \mapsto A_{j}$ and $q \mapsto B_{j}$ are continuous maps from $\mathcal{Q}$ into~$X$.
\end{lemma}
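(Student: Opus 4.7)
The plan is to evaluate the Wronskian identities \eqref{eq.a.wronski}--\eqref{eq.b.wronski} at $x=0$ and insert the Fourier-type representations of Lemmas~\ref{lemma.f+}--\ref{lemma.f-}. A decisive technical point is that while the representation \eqref{eq.f+.quasi.rep} directly provides $f_+^{[1],u_+}$, the analogous formula \eqref{eq.f-.quasi.rep} yields $f_-^{[1],u_-}$ rather than $f_-^{[1],u_+}$. I would therefore invoke the identity
\[
    f_-^{[1],u_+}(x,k) = f_-^{[1],u_-}(x,k) + v(x)\,f_-(x,k),
\]
whose value at $x=0$ injects precisely the factor $v(0)$ appearing in the final formulas.

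Set $\alpha_\pm(k):=f_\pm(0,k)$, $\beta_+(k):=f_+^{[1],u_+}(0,k)/(ik)$, and $\beta_-(k):=-f_-^{[1],u_-}(0,k)/(ik)$. After the change of variable $\zeta\mapsto-\zeta$ in the $f_-$-integrals, each of $\alpha_\pm$ and $\beta_\pm$ takes the form $1+\widehat{g_j}(k)$ with $g_j\in X$ supported on $[0,\infty)$. The modified Wronskian computation then gives, after division by~$2ik$,
\[
    a(k) = \frac{\alpha_-(k)\beta_+(k) + \alpha_+(k)\beta_-(k)}{2}
         \;-\; \frac{v(0)\,\alpha_+(k)\alpha_-(k)}{2ik}.
\]
Expanding the first summand, its deviation from $1$ is a finite linear combination of the $\widehat{g_j}$'s together with pairwise products thereof; since $X$ is closed under convolution and convolution preserves support on~$[0,\infty)$, this rearranges to the form $1-\widehat{A_1}(k)$ with $A_1\in X$ supported on~$[0,\infty)$. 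For the $v(0)$-term, the product $\alpha_+(k)\alpha_-(k)=f_+(0,k)f_-(0,k)$ has the same algebraic structure, so defining $A_2\in X$ supported on~$[0,\infty)$ through an analogous expansion yields the desired $v(0)$-piece; evaluation at $k=0$ then produces $1-\widehat{A_2}(0)=f_+(0,0)f_-(0,0)$, strictly positive by Lemma~\ref{lemma.jost}. In the exceptional case $q\in\mathcal{Q}_0$ the factor $v(0)$ vanishes and we simply take $A_2=0$.

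The derivation of \eqref{eq.bk} is entirely parallel: replacing $f_+(x,k)$ by $f_+(x,-k)$ in the Wronskian formula \eqref{eq.b.wronski} produces an analogous expansion with the same $X$-valued building blocks, giving $B_1,B_2\in X$ with the claimed properties. Continuity of the maps $q\mapsto A_j,B_j$ from $\mathcal{Q}$ into~$X$ follows by combining Remark~\ref{rem.top} with Lemmas~\ref{lemma.Gamma+}--\ref{lemma.Gamma-} evaluated at $x=0$: these give continuity of each $g_j$ as a function of $\bigl(u_+|_{(a,\infty)}, u_-|_{(-\infty,b)}\bigr)$ in the $X_a^+\times X_b^-$-topology, and the remaining operations (finite sums, scalar multiplication, and convolution) are continuous on~$X$. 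The principal obstacle I anticipate is algebraic bookkeeping: tracking signs carefully through the Wronskian expansion and the change of variables $\zeta\mapsto-\zeta$, and arranging the decompositions so that each of $A_j$ and $B_j$ genuinely lies in $X$ with the correct half-line support and produces the prescribed values at $k=0$.
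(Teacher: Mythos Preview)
Your approach is correct and matches the paper's own argument precisely: the paper omits the detailed calculations and only remarks that the derivation uses the Wronskian formulas \eqref{eq.a.wronski}--\eqref{eq.b.wronski} at $x=0$, the representation formulas \eqref{eq.f+.rep}--\eqref{eq.f+.quasi.rep} and \eqref{eq.f-.rep}--\eqref{eq.f-.quasi.rep}, the Banach algebra structure of $\widehat{X}$, and the identity relating the $u_+$- and $u_-$-quasi-derivatives of $f_-$---exactly the ingredients you assemble. Your anticipated difficulty with sign bookkeeping is apt (indeed the paper's own informal remark after the lemma carries a sign slip in the quasi-derivative conversion), but this does not affect the structure of the argument.
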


The derivation of (\ref{eq.ak}) and (\ref{eq.bk}) uses the fact that $\widehat X$ is a Banach algebra and is quite straightforward. We omit the corresponding calculations and only note that the term involving $v(0)$ arises from the fact that the
quasi-derivatives in $W_{+}$ are referred to $u_{+}$, whereas the
representation formula~\eqref{eq.f-.quasi.rep} for $f_{-}^{\left[  1\right]  }$ refers to $u_{-}$. Since $u_{+}$ and $u_{-}$ differ by a continuous function $v$ and $f_{-}$ is absolutely continuous in $x$, the expression $f_{-}^{\prime}(x,k) -
u_{+}(x)f_{-}(x,k)=f_{-}^{\left[  1\right]  }(x,k)-v(x)f_{-}(x,k)$ defines a
continuous function.

\begin{corollary}\label{cor.pole-of-a}
If $v(0)\neq0$, i.e., if $q\in\mathcal{Q}_{>0}$, then
the function~$a$ has a singularity at $k=0$ with%
\begin{equation}\label{eq.theta}%
    \theta:=\lim_{k\rightarrow0}2ik\,a(k)=v(0)f_{+}(0,0)f_{-}(0,0)>0.
\end{equation}
\end{corollary}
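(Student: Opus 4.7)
The plan is to read off the singular behaviour of $a$ at $k=0$ directly from the representation formula~\eqref{eq.ak} in Lemma~\ref{lemma.ab}. Multiplying by $2ik$, we get
\[
    2ik\, a(k) = 2ik\bigl(1 - \widehat{A}_1(k)\bigr) + v(0)\bigl(1 - \widehat{A}_2(k)\bigr).
\]
Since $A_1 \in X \subset L^1(\mathbb{R})$, its Fourier transform $\widehat{A}_1$ is continuous and bounded on $\mathbb{R}$, so the first right-hand term tends to $0$ as $k \to 0$. Similarly, $\widehat{A}_2$ is continuous at $k=0$, so the second term tends to $v(0)\bigl(1 - \widehat{A}_2(0)\bigr)$. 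Combined with the identification $1 - \widehat{A}_2(0) = f_+(0,0)f_-(0,0)$ supplied by Lemma~\ref{lemma.ab}, this yields
\[
    \lim_{k \to 0} 2ik\, a(k) = v(0)\, f_+(0,0)\, f_-(0,0),
\]
which is the claimed identity for $\theta$.

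It remains to verify strict positivity. By hypothesis $q \in \mathcal{Q}_{>0}$, and so $v(0) > 0$ by the very definition of the generic subclass. The Jost solutions at zero energy $f_\pm(\,\cdot\,,0)$ exist (this was established in~\S\ref{subsec.jost} for every $q \in \mathcal{Q}$), and the Schr\"odinger operator associated with $q$ is non-negative, so $\lambda_0(q) \geq 0$. Hence Lemma~\ref{lemma.jost} applies and gives $f_\pm(0,0) > 0$. Multiplying the three strictly positive factors yields $\theta > 0$.

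Finally, since $\theta \ne 0$ while $a$ itself is continuous and bounded near $k=0$ on the real axis away from the origin (as follows from~\eqref{eq.ak} together with continuity of $\widehat{A}_1$ and $\widehat{A}_2$), the factor $1/(2ik)$ forces $a(k)$ to blow up as $k \to 0$, confirming that $a$ has a genuine singularity at the origin. The whole argument is essentially bookkeeping once Lemmas~\ref{lemma.ab} and~\ref{lemma.jost} are in hand; there is no real obstacle, the only point requiring mild care is that the positivity of $f_\pm(0,0)$ does rely on the non-negativity of the Schr\"odinger operator through Lemma~\ref{lemma.jost}, which is where the hypothesis $q \in \mathcal{Q}$ (and not merely $q \in H^{-1}_{\mathrm{loc}}$) enters.
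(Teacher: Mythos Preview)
Your argument is correct and is precisely the intended one: the paper states this as a corollary with no proof because it is meant to be read off directly from the representation~\eqref{eq.ak} of Lemma~\ref{lemma.ab} together with the identification $1-\widehat{A}_2(0)=f_+(0,0)f_-(0,0)$, exactly as you do. Your care in invoking Lemma~\ref{lemma.jost} (and the definition of~$\mathcal{Q}_{>0}$) for the strict positivity is appropriate, since Lemma~\ref{lemma.ab} alone only records that $f_+(0,0)f_-(0,0)$ is nonzero.
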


\begin{remark}
If $q$ is sufficiently regular so that the ordinary derivatives $f_{+}^{\prime}(x,k)$ and $f_{-}^{\prime}(x,k)$ in~$x$ exist and are continuous, then relations~\eqref{eq.u+.jost}--\eqref{eq.u-.jost} yield the representation of the number~$\theta$ of~\eqref{eq.theta} as the usual Wronskian of the Jost solutions, viz.
\[
\theta=f_{+}^{\prime}(0,0)f_{-}(0,0)-f_{+}(0,0)f_{-}^{\prime}(0,0).
\]
\end{remark}

We now turn to the reflection and transmission coefficients defined by
(\ref{eq.r+})--(\ref{eq.t}).

\begin{proposition}\label{prop.maps-to-R-neq-0}
Suppose that $q\in\mathcal{Q}_{>0}$. Then $r_{\pm}\in\mathcal{R}_{>0}$, and the maps (\ref{eq.Spm}) and (\ref{eq.tmap}) are continuous.
\end{proposition}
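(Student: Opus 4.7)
The strategy is to exploit Lemma~\ref{lemma.ab} to rewrite $a$ and $b$ in a form that isolates the simple pole of $a$ at $k=0$, then cancel that pole by a universal denominator whose reciprocal is itself in $\widehat X$. Specifically, I would set
\[
    \alpha(k) := 2ik\, a(k), \qquad \beta(k) := 2ik\, b(k);
\]
by Lemma~\ref{lemma.ab} these are continuous on $\mathbb R$ with $\alpha(0)=\theta>0$ and $\beta(0)=-\theta$, where $\theta$ is as in Corollary~\ref{cor.pole-of-a}. The key observation is that for $v(0)>0$ the function $1/(v(0)+2ik)$ lies in $\widehat X$, since it is the Fourier transform of $x\mapsto e^{v(0)x}\chi_{(-\infty,0]}(x) \in X$.

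Introducing $\alpha_0(k):=\alpha(k)/(v(0)+2ik)$ and $\beta_0(k):=\beta(k)/(v(0)+2ik)$, a short algebraic rearrangement (using that $2ik/(v(0)+2ik)=1-v(0)/(v(0)+2ik)$) combined with the Banach algebra property of $1\dotplus\widehat X$ yields $\alpha_0 \in 1\dotplus\widehat X$ and $\beta_0 \in \widehat X$. Moreover, $\alpha_0$ is nonvanishing on $\mathbb R$ (since $a(k)\neq 0$ for $k\in\mathbb R\setminus\{0\}$ by the earlier argument using non-negativity of $S$, and $\alpha(0)=\theta>0$) and $\alpha_0(k)\to 1$ at infinity, so $\alpha_0$ is invertible in $1\dotplus\widehat X$. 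Since
\[
    r_-(k) = \frac{\beta_0(k)}{\alpha_0(k)},\qquad
    r_+(k) = \frac{\beta_0(-k)}{\alpha_0(k)}\cdot \frac{v(0)-2ik}{v(0)+2ik},
\]
and $(v(0)-2ik)/(v(0)+2ik) = -1 + 2v(0)/(v(0)+2ik) \in 1\dotplus\widehat X$, it follows that both $r_\pm$ belong to $\widehat X$.

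The remaining defining properties of $\mathcal R_{>0}$ are immediate. The symmetry $r_\pm(-k)=\overline{r_\pm(k)}$ follows from~\eqref{eq.abar}--\eqref{eq.bbar}; the bound $|r_\pm(k)|<1$ for $k\neq 0$ follows from~\eqref{eq.ab1} and the finiteness of $a$ on $\mathbb R\setminus\{0\}$; and $r_\pm(0)=-1$ follows by evaluating the displayed formulas at the origin (the extra factor in $r_+$ equals $1$ there). For $\widetilde r_\pm$ I would use
\[
    \widetilde r_\pm(k) = \frac{1-|r_\pm(k)|^2}{k^2}
        = \frac{1}{k^2|a(k)|^2} = \frac{4}{|\alpha(k)|^2}
        = \frac{4}{(v(0)^2+4k^2)\,\alpha_0(k)\alpha_0(-k)};
\]
since $1/(v(0)^2+4k^2)=1/[(v(0)+2ik)(v(0)-2ik)]\in\widehat X$ and $1/[\alpha_0(k)\alpha_0(-k)]\in 1\dotplus\widehat X$, the product lies in $\widehat X$, while $\widetilde r_\pm(0) = 4/\theta^2 > 0$.

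Continuity of the maps $(w_+,w_-,v(0))\mapsto r_\pm$ and $\mapsto\widetilde r_\pm$ into $\widehat X$ then follows by combining three standard ingredients: the continuity of $q\mapsto A_j,B_j \in X$ from Lemma~\ref{lemma.ab}; the continuity of $v(0)\mapsto 1/(v(0)+2ik)$ from $(0,\infty)$ into $\widehat X$, verified directly from the explicit formula $e^{v(0)x}\chi_{(-\infty,0]}\in X$ via dominated convergence; and the continuity of multiplication and inversion in the Banach algebra $1\dotplus\widehat X$ (the latter on the open set of invertible elements). The main obstacle in the whole argument is really the first move—choosing the explicit denominator $v(0)+2ik$ which both cancels the pole of $a$ at the origin and has reciprocal in $\widehat X$—after which the membership of $r_\pm$ and $\widetilde r_\pm$ in $\widehat X$ and the continuity statements reduce to routine manipulations in $1\dotplus\widehat X$.
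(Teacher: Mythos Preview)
Your proof is correct and follows essentially the same strategy as the paper: regularize $a$ and $b$ by a factor that removes the simple pole at $k=0$ while keeping everything in $1\dotplus\widehat X$, then use the Banach-algebra structure. The only difference is the choice of regularizing factor: the paper multiplies by the \emph{fixed} function $k/(k+i)$ (noting that $(k+i)^{-1}\in\widehat X$), obtaining $\widetilde a(k)=\tfrac{k}{k+i}a(k)\in 1\dotplus\widehat X$ and $\widetilde b(k)=\tfrac{k}{k+i}b(k)\in\widehat X$, and then writes $r_\pm$ and $\widetilde r_\pm=(k^2+1)^{-1}|\widetilde a(k)|^{-2}$ directly in terms of these. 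Because the paper's factor does not depend on $q$, continuity of $q\mapsto(\widetilde a,\widetilde b)$ is immediate from Lemma~\ref{lemma.ab}, whereas your $q$-dependent factor $2ik/(v(0)+2ik)$ forces the extra (but easy) check that $v(0)\mapsto 1/(v(0)+2ik)$ is continuous from $(0,\infty)$ into $\widehat X$. Both routes are equally short; the paper's has the mild advantage that the regularizer is universal, while yours makes the value $\widetilde r_\pm(0)=4/\theta^2$ drop out a bit more transparently.
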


\begin{proof}
First, $r_{\pm}(-k)=\overline{r_{\pm}(k)}$ by (\ref{eq.abar})--(\ref{eq.bbar}), and $\left\vert r_{\pm}(k)\right\vert <1$ for $k\neq0$ by~(\ref{eq.ab1}).
To show that $r_{\pm}\in\widehat{X}$ and to analyze small-$k$ behavior we
factor out the leading behavior of $a(k)$ and $b(k)$ as $k\rightarrow0$. The
function $\left(  k+i\right)  ^{-1}$ is easily seen to belong to $\widehat{X}%
$, so that, by Lemma~\ref{lemma.ab}, Corollary~\ref{cor.pole-of-a}, and the
Banach algebra structure of~$\widehat{X}$, the functions%
\begin{equation}\label{eq.abtilde}
\begin{aligned}
    \widetilde{a}(k)  &  =\frac{k}{k+i}a(k),\\
    \widetilde{b}(k)  &  =\frac{k}{k+i}b(k)
\end{aligned}
\end{equation}
also belong to $1\dotplus\widehat{X}$ and $\widehat{X}$, respectively, and the
map $q  \mapsto\bigl(\,\widetilde{a},\widetilde{b}\,\bigr)$ is continuous. Moreover, the function~$\widetilde a$ does not vanish on~$\mathbb{R}$, tends to~$1$ at infinity and thus is an invertible element of $1\dotplus\widehat{X}$. Observing that (cf.~(\ref{eq.r+})--(\ref{eq.t}))
\begin{align}
    r_{+}(k)  &  =\frac{i-k}{i+k}\frac{\widetilde{b}(-k)}{\widetilde{a}(k)},
\label{eq.r+.renorm}\\
    r_{-}(k)  &  =\frac{\widetilde{b}(k)}{\widetilde{a}(k)}, \label{eq.r-.renorm}%
\end{align}
we see that $r_{\pm}\in\widehat{X}$. \ Since $\widetilde{a}$ and $\widetilde{b}$ are continuous and
\[
    \widetilde{a}(0)=-\widetilde{b}(0)=-\tfrac12v(0)f_{+}(0,0)f_{-}(0,0)\ne0,
\]
we have $r_{\pm}(0)=-1$. Finally, we compute%
\begin{equation}
    \widetilde{r}_\pm (k) =
        \frac{1-\left\vert r_{\pm}(k)\right\vert ^{2}}{k^{2}}
            =\frac{1}{k^{2}+1}
            \frac{1}{\left\vert \widetilde{a}(k)\right\vert ^{2}}; \label{eq.thing.renorm}%
\end{equation}
since $\left\vert \widetilde{a}(k)\right\vert ^{2} = \widetilde{a}(k)\widetilde{a}(-k)$ belongs to~$1\dotplus\widehat{X}$ and is an invertible element there, the left-hand side of~\eqref{eq.thing.renorm} belongs to
$\widehat{X}$ and satisfies the condition~$\widetilde{r}_\pm(0)>0$.

Continuity of the maps~\eqref{eq.Spm} and \eqref{eq.tmap} as
maps from $X_{0}^{+}\times X_{0}^{-}\times\mathbb{R}^+$ into $\widehat{X}$
follows from Lemma~\ref{lemma.ab} and formulas \eqref{eq.r+.renorm}--\eqref{eq.thing.renorm}.
\end{proof}

To complete the proof of Theorem \ref{thm.direct}, we need to show that a
potential $q\in\mathcal{Q}$ is uniquely determined by a single reflection
coefficient. The following proof is a simple variant of the proof of
Levinson's theorem given in \cite{DT:1979}.

\begin{proposition}
\label{prop.one-to-one}Suppose that $q_{1}, q_{2}\in\mathcal{Q}$ have the same
right reflection coefficient $r$. Then $q_{1}=q_{2}$ as distributions.
\end{proposition}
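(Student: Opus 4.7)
The plan is to argue that the right reflection coefficient $r_+$ determines the extremal Riccati representative $u_+$ on the whole real line; then $q = u_+' + u_+^2$ as distributions in $H^{-1}$ and $q_1 = q_2$ follows at once. Since $\mathcal{R}_0$ and $\mathcal{R}_{>0}$ are disjoint (the former has $|r(0)|<1$, the latter $r(0) = -1$), the common value $r_+$ forces $q_1$ and $q_2$ to lie in the same class, $\mathcal{Q}_0$ or $\mathcal{Q}_{>0}$. The case $\mathcal{Q}_0$ is the uniqueness theorem of Paper~I, so I would concentrate on $q_1,q_2\in\mathcal{Q}_{>0}$.

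The key observation is that the $2\times2$ matrix $\Omega_+$ appearing in the Marchenko equation~\eqref{eq.GLM+} is assembled solely from $F_+$, the inverse Fourier transform of $r_+$ (see~\eqref{eq.F+}), so it is the same for the two potentials. The matrix kernels $\Gamma_+^{(1)}(x,\cdot)$ and $\Gamma_+^{(2)}(x,\cdot)$ in $X_0^+\otimes M_2(\mathbb{C})$ associated with $q_1$ and $q_2$ therefore satisfy $(I+T_x)\Gamma = -\Omega_+(x+\,\cdot\,)$ with one and the same operator
\[
    (T_x\Gamma)(\zeta) = \int_0^\infty \Gamma(t)\,\Omega_+(x+t+\zeta)\,dt,\qquad \zeta>0,
\]
and one and the same right-hand side. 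The heart of the proof is then to show that $I+T_x$ is invertible on $X_0^+\otimes M_2(\mathbb{C})$ for every $x\in\mathbb{R}$. Compactness of $T_x$ on this space follows from $F_+\in X$ by routine estimates, so the Fredholm alternative reduces the task to verifying that $\ker(I+T_x)=\{0\}$; this is the main obstacle. A nontrivial kernel element would, through the inverse construction of Paper~I, produce an $L^2$ eigenfunction of the Schr\"odinger operator~$S$ at a nonpositive energy, contradicting non-negativity of $S$ together with the strict positivity of the extremal zero-energy Jost solutions guaranteed by Lemma~\ref{lemma.jost}. The details mirror the corresponding argument in Paper~I, with only the obvious substitution of $X_0^+$ for $X$ as the ambient function space.

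Granted this Fredholm uniqueness, $\Gamma_+^{(1)}(x,\cdot)=\Gamma_+^{(2)}(x,\cdot)$ for every $x$, hence $K_+^{(1)}=K_+^{(2)}$, and setting $k=0$ in~\eqref{eq.m+} gives
\[
    f_+^{(j)}(x,0) = 1 + \int_0^\infty K_+^{(j)}(x,\zeta)\,d\zeta
\]
as a common positive continuous function of $x\in\mathbb{R}$ for $j=1,2$. Formula~\eqref{eq.u+.jost} then yields $u_+^{(1)} = u_+^{(2)}$ on $\mathbb{R}$, and since $q_j=(u_+^{(j)})'+(u_+^{(j)})^2$, we conclude $q_1=q_2$ in $H^{-1}(\mathbb{R})$.
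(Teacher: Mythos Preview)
Your approach is correct and is, in essence, the physical-space dual of the paper's argument. The paper works on the Fourier side: setting $h_+(k)=m_+(x,k,q_1)-m_+(x,k,q_2)\in H_+^2(\mathbb{R})$, Lemma~\ref{lemma.hardy} gives $\overline{h_+(k)}+e^{2ikx}r(k)h_+(k)\in H_+^2(\mathbb{R})$, and the Deift--Trubowitz vanishing lemma (if $h\in H_+^2$, $|r|<1$ a.e., and $rh+\overline h\in H_+^2$, then $h=0$) forces $h_+\equiv0$. Your route---Fredholm uniqueness for $I+T_x$ on $X_0^+\otimes M_2(\mathbb{C})$---is equivalent, since Lemma~\ref{lemma.hardy} is obtained precisely by Fourier-transforming the Marchenko equation~\eqref{eq.GLM+}, and the vanishing lemma is the Fourier-space statement of $\ker_{L^2}\bigl(I\pm T_{F_+}(x)\bigr)=\{0\}$. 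What your phrasing buys is a direct reuse of the machinery of Proposition~\ref{pro.Gamma}; the paper's phrasing is shorter because the vanishing lemma is quoted as a black box.

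The one soft spot is your justification for $\ker(I+T_x)=\{0\}$. The clean argument (see the proof of Proposition~\ref{pro.Gamma}, which cites Lemma~6.4.1 of~\cite{Marchenko:1977}) uses only the scattering inequality $|r(k)|<1$ for a.e.\ $k$, which you already have from~\eqref{eq.ab1}; it does not pass through producing an $L^2$ eigenfunction of~$S$, and the appeal to Lemma~\ref{lemma.jost} is unnecessary. The eigenfunction heuristic you invoke is the right intuition for why omitted bound-state contributions spoil unique solvability, but here there are none and the direct $|r|<1$ argument is both simpler and what Paper~I actually does. Once $\Gamma_+^{(1)}=\Gamma_+^{(2)}$ is established, your conclusion via $u_+^{(1)}=u_+^{(2)}$ and $q_j=(u_+^{(j)})'+(u_+^{(j)})^2$ is correct and in fact slightly tighter than the paper's, which also treats $h_-$ although the right Riccati representative alone already determines~$q$.
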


\begin{proof}
First, we recall the vanishing lemma (Lemma 1 on p. 207 of \cite{DT:1979})
which states that if $h\in H_{+}^{2}(\mathbb{R})$, if $r\in L^{\infty}(\mathbb{R})$ with $\left\vert r(k)\right\vert <1$ a.e., and
 $rh+\overline{h}\in H_{+}^{2}(\mathbb{R})$,
then $h=0$. As in \cite{DT:1979} we will use the analyticity property of~$m_{\pm}$ given in Lemma \ref{lemma.hardy} to prove that $m_{+}(x,k,q_{1})=m_{+}(x,k,q_{2})$ and $m_{-}(x,k,q_{1}) = m_{-}(x,k,q_{2})$. If so, we can conclude from (\ref{eq.u+.jost})--(\ref{eq.u-.jost}) that the left and right Riccati representatives of the two potentials are the same; hence $v(0;q_{1})=v(0;q_{2})$ and $q_{1}=q_{2}$ as distributions.

To prove the equality of Jost solutions, fix $x\in\mathbb{R}$ and let
\begin{align*}
    h_{+}(k)  &  =m_{+}(x,k,q_{1})-m_{+}(x,k,q_{2}),\\
    h_{-}(k)  &  =m_{-}(x,k,q_{1})-m_{-}(x,k,q_{2}).
\end{align*}
Then $h_{\pm}\in H_{+}^{2}(\mathbb{R})$ and, by Lemma \ref{lemma.hardy},
\begin{align*}
    h_{+}(-k)+e^{2ikx}r_{+}(k)h_{+}(k)  &  \in H_{+}^{2}(\mathbb{R}),\\
    h_{-}(-k)+e^{-2ikx}r_{-}(k)h_{-}(k)  &  \in H_{+}^{2}(\mathbb{R}).
\end{align*}
Since $h_{\pm}(-k)=\overline{h_\pm(k)}$, it follows from the vanishing lemma that $h_{+}=h_{-}=0$.
\end{proof}

Propositions \ref{prop.maps-to-R-neq-0} and \ref{prop.one-to-one} give the
proof of Theorem \ref{thm.direct}.


\subsection{Reconstruction of the transmission coefficient}

\label{subsec.recon}

Next, we show how to construct $t(k)$ given either one of the reflection
coefficients. As we mentioned above, formula~\eqref{eq.a.wronski} allows to extend~$a$ analytically in the open upper-half plane~$\mathbb{C}^+$ and this extension has no zeros in~$\overline{\mathbb{C}^{+}}\setminus\{0\}$. Thus the regularization~$\widetilde a$ of~$a$ given by~\eqref{eq.abtilde} extends to
a bounded holomorphic function in the upper-half plane with no zeros in its
closure. Using the Schwarz formula to reconstruct the function $\log
\widetilde{a}$ from its real part $\operatorname{Re}\log\widetilde{a}%
(s)=\log|\widetilde{a}(s)|$, we get
\begin{equation}
    \widetilde{a}(z)=\exp\left(  \frac{1}{\pi i}\int_{\mathbb{R}}\log\left\vert
    \widetilde{a}(s)\right\vert \frac{ds}{s-z}\right)  .\label{eq.tildea.rep}%
\end{equation}
Combining \eqref{eq.thing.renorm}--(\ref{eq.tildea.rep}) results in%
\[
t(z)  :=1/a(z)
        = \frac{z}{z+i}
        \exp\left\{  \frac{1}{2\pi i}\int_{\mathbb{R}}
            \log\left[\left(  1-\left\vert r_{\pm}(s)\right\vert^{2} \right) \frac{s^2+1}{s^2} \right]
            \frac{ds}{s-z}\right\},
\]
and $t$ on the real line is given as a boundary value as $\operatorname{Im} z\rightarrow 0$. Recalling the Riesz projector~$\mathcal{C}_+$ of~\eqref{eq.Cauchy}, we get the formula
\begin{equation}\label{eq.t.recon}
    t(k) = \frac{k}{k+i}
        \exp \left\{  \left(\mathcal{C}_+
       \log \Bigl[ \left(1-\left\vert r_\pm(s)\right\vert ^{2}\right)\frac{s^2+1}{s^2} \Bigr]\right)(k)\right\}.
\end{equation}
In particular, the number~$\theta:=\lim_{k\rightarrow0}\left[2ik/t(k)\right]$ can be recovered from either reflection coefficient.

We next show that formula~\eqref{eq.t.recon} makes sense for every element $r\in\mathcal{R}_{>0}$. Indeed, as noted in Introduction, the function
\begin{equation}\label{eq.r.tilde}
    \left(1-\left\vert r(k)\right\vert ^{2}\right)\frac{k^2+1}{k^2}
        = 1-r(k)r(-k) + \widetilde{r}(k)
\end{equation}
belongs to the algebra~$1 \dotplus \widehat{X}$, does not vanish on the real line, and tends to~$1$ at infinity. By the Wiener--Levi lemma (Lemma~A.2 of Paper~I), the function
\[
    \log \Bigl[\left(1-\left|r(k)\right|^{2}\right)\frac{k^2+1}{k^2}\Bigr]
\]
also belongs to $1\dotplus \widehat X$; in fact, since it vanishes at infinity, it belongs to $\widehat X$. Finally, the Riesz projector~$\mathcal{C}_+$ acts continuously in~$\widehat X$, and exponentiation is a continuous operation in $1 \dotplus \widehat X$ by the Wiener--Levi lemma. We now define a function~$\widetilde t\in 1 \dotplus \widehat X$ by (cf.~\eqref{eq.t.recon})
\begin{equation}\label{eq.tildet.rep}
    \widetilde t = \exp \left\{ \mathcal{C}_+
        \log \Bigl[\left(1-\left|r(k)\right|^{2}\right)\frac{k^2+1}{k^2}\Bigr]
        \right\}.
\end{equation}
Clearly, $\widetilde t$ is an invertible element of the Banach algebra~$1 \dotplus \widehat X$. Moreover, the following holds:

\begin{lemma}\label{lemma.t.recon}
The mappings
\[
     \mathcal{R}_{>0} \ni r \mapsto \widetilde t \in 1 \dotplus \widehat X
\]
and
\[
     \mathcal{R}_{>0} \ni r \mapsto 1/\widetilde t \in 1 \dotplus \widehat X
\]
are continuous.
\end{lemma}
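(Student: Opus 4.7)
The plan is to decompose the map $r \mapsto \widetilde t$ as a chain of four continuous operations, each on an appropriate Banach-algebraic function space, and then invoke continuity of inversion for $1/\widetilde t$. Write $G(r) := 1 - r(k)r(-k) + \widetilde r(k)$, so that $G(r) = (1-|r(k)|^2)(k^2+1)/k^2$. Then
\[
    \widetilde t = \exp\bigl\{\mathcal{C}_+ \log G(r)\bigr\},
\]
and continuity will follow from the continuity of each of the maps $r \mapsto G(r)$, $G \mapsto \log G$, $h \mapsto \mathcal{C}_+ h$, and $h \mapsto \exp h$ on the appropriate domains.

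First I would verify that $r \mapsto G(r)$ is continuous from $\mathcal{R}_{>0}$ into $1 \dotplus \widehat X$. The reflection $k \mapsto -k$ is an isometry of $\widehat X$ (its preimage under the Fourier transform is the reflection of the argument), multiplication is jointly continuous in the Banach algebra $\widehat X$, and the map $r \mapsto \widetilde r$ is continuous into $\widehat X$ by the very definition of the metric on $\mathcal{R}_{>0}$. Next, since $G(r)(k) > 0$ on $\mathbb{R}$ with $G(r)(0) = \widetilde r(0) > 0$ and $G(r)(k) \to 1$ as $|k| \to \infty$, the range of $G(r)$ on the compactified real line is a compact subset of $(0,\infty)$ on which the principal branch of $\log$ is holomorphic. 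The Wiener--Levi lemma (Lemma~A.2 of Paper~I) then yields $\log G(r) \in \widehat X$ (the limit value $\log 1 = 0$ ensures decay at infinity), and its continuity version gives continuity of $r \mapsto \log G(r)$ from $\mathcal{R}_{>0}$ into $\widehat X$: for $r_n \to r$, convergence $G(r_n) \to G(r)$ in $1 \dotplus \widehat X$ forces uniform convergence of the boundary values, so for large $n$ all ranges lie in a common compact neighborhood of the range of $G(r)$ inside $(0,\infty)$, where the holomorphic functional calculus is continuous in the algebra norm.

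The operator $\mathcal{C}_+$ is bounded on $\widehat X$ (noted in the introduction via the identity $\mathcal{C}_+ = \mathcal{F}^{-1} \chi_+ \mathcal{F}$), so it passes the continuity through. For the exponential, given $h \in \widehat X$ the series $\exp h - 1 = \sum_{n \geq 1} h^n/n!$ converges in $\widehat X$ by the Banach-algebra inequality $\|h^n\|_{\widehat X} \leq \|h\|_{\widehat X}^n$, and the estimate
\[
    \exp h_n - \exp h = \exp h \cdot \bigl(\exp(h_n - h) - 1\bigr)
\]
shows that $h \mapsto \exp h$ is continuous from $\widehat X$ into $1 \dotplus \widehat X$. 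Composing the four continuous maps gives the continuity of $r \mapsto \widetilde t$.

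For the second assertion, observe that $1/\widetilde t = \exp\{-\mathcal{C}_+ \log G(r)\}$, so the identical chain of continuity arguments applies. Equivalently, $\widetilde t$ is a continuously varying invertible element of the Banach algebra $1 \dotplus \widehat X$ (its values are bounded away from zero on $\mathbb{R}$ and tend to $1$ at infinity), and the resolvent identity $\widetilde t_n^{-1} - \widetilde t^{-1} = \widetilde t_n^{-1}(\widetilde t - \widetilde t_n)\widetilde t^{-1}$ shows that inversion is continuous on the open set of invertible elements. The main obstacle, or rather the delicate point, is the continuity aspect of the Wiener--Levi calculus for $\log$: one must ensure that nearby $r$'s produce $G(r)$'s whose ranges on the real line remain in a common compact subset of the holomorphy domain of $\log$, which is precisely what convergence in $1 \dotplus \widehat X$ supplies via uniform convergence.
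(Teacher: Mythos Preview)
Your proof is correct and follows essentially the same approach as the paper: decompose the map as $r \mapsto G(r) \mapsto \log G(r) \mapsto \mathcal{C}_+\log G(r) \mapsto \exp\{\cdots\}$, invoke continuity of each factor, and handle the delicate logarithm step via the Wiener--Levi lemma after securing a uniform positive lower bound on $G(r)$ for $r$ near any fixed $r_0$. The only cosmetic difference is that the paper obtains this lower bound explicitly by splitting into $|k|\le 1$ and $|k|>1$ and bounding $1-|r|^2$ and $\widetilde r$ separately, whereas you argue it from uniform convergence of $G(r_n)$ to $G(r)$ implied by convergence in $1\dotplus\widehat X$.
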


\begin{proof}
Since taking the inverse in a Banach algebra~$\mathcal{A}$ is a continuous operation on the open set of all invertible elements of~$\mathcal{A}$, only the first mapping needs to be studied. As noted above, the Riesz projector acts continuously in $\widehat X$ and exponentiation is a continuous mapping from $\widehat X$ to $1 \dotplus \widehat X$, so that it suffices to prove that the mapping
\[
    \mathcal{R}_{>0} \ni r \mapsto
        \log \Bigl[\left(1-\left|r(k)\right|^{2}\right)\frac{k^2+1}{k^2}\Bigr]
            \in \widehat X
\]
is continuous.

Fix an arbitrary~$r_0\in \mathcal{R}_{>0}$ and set
\[
    \varepsilon_0:= \inf_{|k|>1} (1-|r_0(k)|^2),
    \qquad
    \varepsilon_1:= \inf_{|k|\le1} \widetilde r_0(k).
\]
Since $r_0$ is a continuous functions vanishing at infinity and satisfying $|r_0(k)|<1$ for $|k|>1$, we get $\varepsilon_0>0$; also $\varepsilon_1>0$ since $\widetilde r_0$ is a positive continuous function. Fix now $\varepsilon>0$ that is less than $\min\{\varepsilon_0,\varepsilon_1\}/4$ and take an $\varepsilon$-neighbourhood $\mathcal{O}(r_0)$ of the point~$r_0$ in~$\mathcal{R}_{>0}$. Observing that the norm in~$\widehat X$ dominates the sup-norm, i.e., that
\[
    \sup_{k\in\mathbb{R}}|f(k)| \le \|f\|_{\widehat X}
\]
for every~$f\in\widehat X$, we conclude that
\[
    \inf_{|k|>1} (1-|r(k)|^2) \ge \varepsilon_0/2
\]
and
\[
    \inf_{|k|\le1} \widetilde r(k) \ge \varepsilon_1/2
\]
for every $r\in \mathcal{O}(r_0)$. It follows that
\[
    \left(1-\left|r(k)\right|^{2}\right)\frac{k^2+1}{k^2}
        \ge 2\varepsilon
\]
for all $k\in\mathbb R$ and all $r \in \mathcal{O}(r_0)$. The Wiener--Levi lemma (Lemma~A.2 of Paper~I) now yields continuity (in fact, even analyticity) of the mapping
\[
    \mathcal{O}(r_0) \ni r \mapsto \log \Bigl[\left(1-\left|r(k)\right|^{2}\right)\frac{k^2+1}{k^2}\Bigr]
            \in \widehat X.
\]
The proof is complete.
\end{proof}

We observe that since the function in~\eqref{eq.r.tilde} is even and the Riesz projector maps even functions into odd ones, the function~$\widetilde t$ enjoys the symmetry property~$\widetilde t(-k) = \overline{\widetilde t(k)}$. We set $t(k)= k \widetilde t(k)/(k+i)$; then the above considerations show that
\[
    \frac{t(k)}{t(-k)}
        = \frac{k-i}{k+i}\frac{\widetilde{t}(k)}{\widetilde{t}(-k)}%
\]
also belongs to $1\dotplus\widehat{X}$. The function%
\[
    r^{\#}(k)=-\frac{t(k)}{t(-k)}r(k)
\]
thus belongs to $\mathcal{R}$ and, as $\left\vert t(k)/t(-k)\right\vert =1$,
we have%
\[
    \frac{1-\left\vert r^{\#}(k)\right\vert ^{2}}{k^{2}}
        = \frac{1-\left\vert r(k)\right\vert ^{2}}{k^{2}}\in\widehat{X}.
\]
Hence:

\begin{proposition}\label{pro.I}
For $r\in\mathcal{R}_{>0}$, define $\widetilde t$ by~(\ref{eq.tildet.rep}) and set $t(k)= k \,\widetilde t(k)/(k+i)$. Then the nonlinear map%
\[
    \mathcal{I}\,:\, r\mapsto r^{\#}(k):= -\frac{t(k)}{t(-k)}r(-k)
\]
is a continuous involution on $\mathcal{R}_{>0}$.
\end{proposition}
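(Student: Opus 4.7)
The proposition has three assertions to verify: (i) $r^{\#}\in\mathcal{R}_{>0}$, (ii) the map $\mathcal{I}$ is continuous in the $\mathcal{R}_{>0}$ topology, and (iii) $\mathcal{I}\circ\mathcal{I}=\mathrm{id}$. Most of~(i) is already contained in the paragraph preceding the statement, which shows $r^{\#}\in\mathcal{R}$ and $(1-|r^{\#}(k)|^2)/k^2=(1-|r(k)|^2)/k^2=\widetilde r(k)\in\widehat X$; in particular $\widetilde{r^{\#}}=\widetilde r$ and $\widetilde{r^{\#}}(0)>0$. So only the normalization $r^{\#}(0)=-1$ remains to be checked. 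For this I will combine the formula $t(k)=k\,\widetilde t(k)/(k+i)$ with the symmetry $\widetilde t(-k)=\overline{\widetilde t(k)}$ to write
\[
    \frac{t(k)}{t(-k)} = \frac{k-i}{k+i}\,\frac{\widetilde t(k)}{\widetilde t(-k)}
\]
and then read off the value $-1$ at $k=0$, using that $\widetilde t(0)$ is a nonzero real number as the value at the origin of an invertible element of $1\dotplus\widehat X$. Together with $r(0)=-1$ this gives $r^{\#}(0)=-(-1)(-1)=-1$.

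Continuity in~(ii) follows by stacking continuous operations. Lemma~\ref{lemma.t.recon} supplies continuity of both $r\mapsto\widetilde t$ and $r\mapsto 1/\widetilde t$ from $\mathcal{R}_{>0}$ into $1\dotplus\widehat X$, hence of $r\mapsto t(k)/t(-k)$ into the same algebra. The reflection $k\mapsto -k$ is an isometry of $\widehat X$, and multiplication is continuous in the Banach algebra $1\dotplus\widehat X$, so the composition $r\mapsto r^{\#}$ is continuous as a map into $\widehat X$. Because $\widetilde{r^{\#}}=\widetilde r$, continuity in the stronger metric $d(r_1,r_2)=\|r_1-r_2\|_{\widehat X}+\|\widetilde r_1-\widetilde r_2\|_{\widehat X}$ of $\mathcal{R}_{>0}$ is then automatic.

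The heart of the argument is~(iii), and the key observation is that $|r^{\#}(k)|=|r(k)|$ for all real $k$ (since $|t(k)/t(-k)|=1$ on $\mathbb R$). Consequently the argument of the logarithm in~\eqref{eq.tildet.rep} is the same for $r$ and for $r^{\#}$, so the transmission coefficient reconstructed from $r^{\#}$ via the recipe of Proposition~\ref{pro.I} coincides with~$t$; that is, $t^{\#}=t$. A direct calculation using $r^{\#}(-k)=-\tfrac{t(-k)}{t(k)}\,r(k)$ then yields
\[
    \mathcal{I}(\mathcal{I}r)(k)
        = -\frac{t^{\#}(k)}{t^{\#}(-k)}\,r^{\#}(-k)
        = -\frac{t(k)}{t(-k)}\cdot\Bigl(-\frac{t(-k)}{t(k)}\,r(k)\Bigr)
        = r(k),
\]
completing the proof. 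I do not anticipate any genuine difficulty; the single conceptual point that makes everything go through is the invariance of $|r|^2$ under $\mathcal{I}$, which is simultaneously responsible for $\widetilde{r^{\#}}=\widetilde r$, for $t^{\#}=t$, and hence for the involution identity.
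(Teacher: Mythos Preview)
Your proof is correct and follows the same route as the paper: the preceding paragraph already records $r^{\#}\in\mathcal{R}$, $|t(k)/t(-k)|=1$, and $\widetilde{r^{\#}}=\widetilde r\in\widehat X$, and you have simply supplied the details the paper leaves implicit, namely the check that $r^{\#}(0)=-1$, the continuity argument via Lemma~\ref{lemma.t.recon}, and the involution identity $\mathcal{I}\circ\mathcal{I}=\mathrm{id}$ via $t^{\#}=t$.
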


\section{The inverse problem}

\label{sec.inverse}

In this section, we prove:

\begin{theorem}\label{thm.inverse}
Suppose that $r\in\mathcal{R}_{>0}$. There exists a unique
$q\in\mathcal{Q}_{>0}$ having $r$ as its right reflection coefficient.
Moreover, the map $r\mapsto q$ is continuous.
\end{theorem}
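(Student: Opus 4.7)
The strategy is to reverse the direct scattering construction of Section~\ref{sec.direct}: given a single reflection coefficient $r\in\mathcal{R}_{>0}$, reconstruct the triple $(w_{+},w_{-},v(0))$ parametrizing a potential in $\mathcal{Q}_{>0}$ by using \emph{both} Marchenko equations~\eqref{eq.GLM+}--\eqref{eq.GLM-} together with the involution $\mathcal{I}$ of Proposition~\ref{pro.I}. Uniqueness comes essentially for free: by Proposition~\ref{prop.one-to-one} no two elements of~$\mathcal{Q}$ can share the same right reflection coefficient, so at most one $q\in\mathcal{Q}_{>0}$ can map to~$r$. All the substance is in existence and in continuity.

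For existence, given $r=r_{+}\in\mathcal{R}_{>0}$, first set $r_{-}:=\mathcal{I}(r_{+})\in\mathcal{R}_{>0}$ (continuously, by Proposition~\ref{pro.I}), then form the kernels~$F_{\pm}$ and the matrices $\Omega_{\pm}$ via~\eqref{eq.F+}--\eqref{eq.F-}. For each fixed $x\in\mathbb{R}$, I would solve the Marchenko equations~\eqref{eq.GLM+} on $\mathbb{R}^{+}$ and~\eqref{eq.GLM-} on $\mathbb{R}^{-}$ for $\Gamma_{\pm}(x,\,\cdot\,)\in X_{0}^{\pm}\otimes M_{2}(\mathbb{C})$. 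Unique solvability is the main analytic ingredient here: the convolution operator $\Omega_{\pm}\ast\,\cdot\,$ on the appropriate half-line is bounded on $X_{0}^{\pm}$, and invertibility of $I+(\Omega_{\pm}\ast)$ follows by a Hardy-space argument in the spirit of Paper~I, using that $(1-|r|^{2})(k^{2}+1)/k^{2}$ is a positive invertible element of $1\dotplus\widehat{X}$. From $\Gamma_{\pm}$ one reads off $K_{\pm}$ via~\eqref{eq.m+}--\eqref{eq.m-}, hence candidate Jost solutions $f_{\pm}(\,\cdot\,,k)$, and defines extremal Riccati representatives through the zero-energy limit,
\[
    u_{+}(x):=\frac{f_{+}^{\prime}(x,0)}{f_{+}(x,0)},\qquad u_{-}(x):=\frac{f_{-}^{\prime}(x,0)}{f_{-}(x,0)};
\]
setting $w_{\pm}:=u_{\pm}|_{\mathbb{R}^{\pm}}$ and $v(0):=u_{-}(0)-u_{+}(0)$, Lemma~\ref{lemma.r2} then produces a candidate $q\in\mathcal{Q}$.

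The heart of the proof, and what I expect to be the principal obstacle, is \textbf{consistency}: one must verify that (i)~$w_{\pm}\in X_{0}^{\pm}$, (ii)~the distributions $u_{+}^{\prime}+u_{+}^{2}$ and $u_{-}^{\prime}+u_{-}^{2}$ agree on $\mathbb{R}\setminus\{0\}$, (iii)~$v(0)>0$ so that $q\in\mathcal{Q}_{>0}$, and (iv)~the right reflection coefficient of the reconstructed $q$ is exactly the input~$r$. Item~(i) would be extracted from the integrability built into the solvability statement for the Marchenko equations together with Lemmas~\ref{lemma.Gamma+} and~\ref{lemma.Gamma-}. For~(ii) I would mimic the argument of Lemma~\ref{lemma.r1}: both $u_{+}^{\prime}+u_{+}^{2}$ and $u_{-}^{\prime}+u_{-}^{2}$ annihilate the reconstructed zero-energy solutions on their respective half-lines, and the difference is a distribution supported at $0$. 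The positivity~(iii) is exactly where the low-energy data $r(0)=-1$ and $\widetilde r(0)>0$ of $\mathcal{R}_{>0}$ enter: reading \eqref{eq.tildet.rep} backwards, the reconstructed transmission coefficient $t(k)$ has a simple pole at $k=0$ whose residue $\theta/(2i)$ is positive by the Schwarz formula and by $\widetilde r(0)>0$, and then via~\eqref{eq.theta} one identifies $v(0)=\theta/[f_{+}(0,0)f_{-}(0,0)]>0$. Finally~(iv) is closed by a direct-scattering recomputation using Lemma~\ref{lemma.hardy} and the vanishing lemma invoked in Proposition~\ref{prop.one-to-one}: the $H^{2}_{+}$ identities for $m_{\pm}$ force the reconstructed and the input reflection coefficients to coincide.

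Continuity of $r\mapsto q$ follows by inspection of the composition. The map $r\mapsto(r_{+},r_{-})$ is continuous by Proposition~\ref{pro.I}; $r_{\pm}\mapsto\Omega_{\pm}$ is continuous from $\widehat X$ into the Marchenko-kernel space; the solution of the Marchenko equations depends continuously on $\Omega_{\pm}$ by the usual perturbation-of-invertible-operators argument; the passage $\Gamma_{\pm}\mapsto u_{\pm}\mapsto (w_{\pm},v(0))$ is continuous through~\eqref{eq.u+.jost}--\eqref{eq.u-.jost}; and continuity of the final step $(w_{+},w_{-},v(0))\mapsto q$ in the topology of $X_{0}^{+}\times X_{0}^{-}\times\mathbb{R}^{+}$ is exactly Remark~\ref{rem.top}. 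Composing these establishes the remaining assertion of the theorem.
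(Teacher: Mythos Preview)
Your overall plan matches the paper's: reconstruct the partner reflection coefficient via the involution~$\mathcal{I}$, solve the two Gelfand--Levitan--Marchenko equations, read off candidate Riccati representatives, and then verify consistency. Uniqueness via Proposition~\ref{prop.one-to-one} and continuity by composition are also handled as in the paper.

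The genuine gap is in your consistency step~(ii). You propose to show that $u_+' + u_+^2$ and $u_-' + u_-^2$ agree on $\mathbb{R}\setminus\{0\}$ by ``mimicking the argument of Lemma~\ref{lemma.r1}'', asserting that the difference is a distribution supported at~$0$. But Lemma~\ref{lemma.r1} \emph{presupposes} that $u_+$ and $u_-$ are Riccati representatives of one and the same potential; here, by contrast, $u_+$ and $u_-$ arise from two \emph{independent} GLM equations (one built from $r$, the other from $r^\# = \mathcal{I}r$), and there is no a~priori reason the distributions $q := u_+' + u_+^2$ and $q^\# := u_-' + u_-^2$ should agree anywhere at all. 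That they do is precisely the crux of the inverse problem, and it does not follow from anything you have written.

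The paper closes this gap by a different route. It first proves (Lemma~\ref{lemma.fsharp}) that the reconstructed Jost-type functions $f$ and $f^\#$ satisfy the scattering relations
\[
    f^\#(x,k) = a(k)\, f(x,-k) - b(-k)\, f(x,k), \qquad
    f(x,k)    = a(k)\, f^\#(x,-k) + b(k)\, f^\#(x,k),
\]
via a Liouville-type argument: one forms the right-hand sides, uses the Hardy-space identities of Lemma~\ref{lemma.r.hardy} together with the analyticity of $a$ in~$\mathbb{C}^+$ and its simple pole at $k=0$ (this is where $r(0)=-1$ and $\widetilde r(0)>0$ genuinely enter), and concludes that certain bounded entire functions are constant. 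Only \emph{after} these relations are in hand does Lemma~\ref{lemma.qsharp} follow: $f(\,\cdot\,,k)$ is then a solution of the Schr\"odinger equation for both $q$ and~$q^\#$, so $(q - q^\#) f(\,\cdot\,,k) = 0$ for all real $k \neq 0$, whence $q = q^\#$ as distributions on all of~$\mathbb{R}$. Your step~(iv) is then immediate from the scattering relations themselves --- they are exactly the defining relations~\eqref{eq.ab+}--\eqref{eq.ab-} for $a$ and~$b$ --- with no further appeal to the vanishing lemma needed.
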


We suppose given a function $r\in\mathcal{R}_{>0}$, presumed to be the right
reflection coefficient corresponding to a potential~$q_0$ to be found. From this
data, we can construct~$t(k)$ (and hence $a(k):=1/t(k)$) using (\ref{eq.t.recon}), and use the involution $\mathcal{I}$ to construct $r^{\#}=\mathcal{I}r$, a
candidate for the left reflection coefficient. Clearly, we then should define $b$ as $r^{\#} a$.

In \S \ref{subsec.GLM}, we form two Gelfand--Levitan--Marchenko equations like~\eqref{eq.GLM-}--\eqref{eq.GLM+} of Subsection~\ref{subsec.jost} but taking the putative reflection coefficients~$r$ and $r^{\#}$ instead of~$r_+$ and~$r_-$ and prove that these equations are uniquely soluble for the kernels~$\Gamma$ and~$\Gamma^{\#}$. These kernels determine candidate right and left Riccati representatives~$w$ and~$w^{\#}$, which give the Riccati data
\begin{equation}
\bigl(  \left.  w\right\vert _{\mathbb{R}^+},
         \left.  w^{\#}\right\vert_{\mathbb{R^-}},(w^{\#}-w)(0)\bigr) \label{eq.riccati.data}%
\end{equation}
of a distribution potential $q_0 \in\mathcal{Q}$. The
construction exhibits continuity of the map from $r$ to the data
(\ref{eq.riccati.data}) as maps from $X$ to $X_{0}^{+}\times X_{0}^{-}%
\times\mathbb{R}^+$.

In \S \ref{subsec.recon.jost} we show that $\Gamma$ and $\Gamma^{\#}$ can be used to construct Jost solutions for half-line Schr\"{o}dinger operators with Riccati representatives $w$ and $w^{\#}$ respectively. Finally, in \S \ref{subsec.consistent} we justify the reconstruction by showing that $w'+w^2=(w^{\#})'+(w^{\#})^2=q_0$ and that $q_0$ has reflection coefficients $r$ and $r^{\#}$. It then follows from Proposition~\ref{prop.one-to-one} that $q_0$ is the correct reconstruction.

\subsection{Reconstruction maps}\label{subsec.GLM}

Given a function $r\in\mathcal{R}_{>0}$, we compute $t=1/a$ as a boundary value of (\ref{eq.t.recon}) and set $r^{\#}:=\mathcal{I}r$ and $b:= r^{\#}a$.

Next, set%
\begin{align}\label{eq.F}
    F(x)  &  :=\frac{1}{\pi}\int_{-\infty}^{\infty}r(k)e^{2ikx}\,dk,\\
    F^{\#}(x)  &  :=\frac{1}{\pi}\int_{-\infty}^{\infty}r^{\#}(k)e^{-2ikx}\,dk,
\label{eq.Fsharp}%
\end{align}
form the matrix-valued functions
\[
    \Omega(x) := \begin{pmatrix} 0 & F(x)\\ F(x) & 0 \end{pmatrix},  \qquad\Omega^{\#}(x)
        := \begin{pmatrix} 0 & F^{\#}(x)\\ F^{\#}(x) & 0 \end{pmatrix},
\]
and consider the integral equations (cf.\ equations~\eqref{eq.GLM-}--\eqref{eq.GLM+})
\begin{align}
    \Gamma^{\#}(x,\zeta) + \Omega^{\#}(x+\zeta)
        + \int_{-\infty}^{0}\Gamma^{\#}(x,t)\Omega^{\#}(x+t+\zeta)\,dt  & =0, \quad\zeta<0,\label{eq.GLM-sharp}%
\\
    \Gamma(x,\zeta) + \Omega(x+\zeta)
        + \int_{0}^{\infty} \Gamma(x,t)\Omega(x+t+\zeta)\,dt  & =0, \quad\zeta>0. \label{eq.GLM}%
\end{align}
The matrix equation~\eqref{eq.GLM} yields the following system for the
entries~$\Gamma_{11}$ and $\Gamma_{12}$ of~$\Gamma$:
\begin{alignat}{2}
\label{eq.Gamma.11}
                 & \Gamma_{11}(x,\zeta)
            + \int_{0}^{\infty}\Gamma_{12}(x,t)F(x+t+\zeta)\,dt  &= 0,\\
    F(x+\zeta) \, +\,& \Gamma_{12}(x,\zeta)
            + \int_{0}^{\infty}\Gamma_{11}(x,t)F(x+t+\zeta)\,dt  & =0, \label{eq.Gamma.12}%
\end{alignat}
which gives a single equation to determine~$\Gamma_{12}$ in the form
\begin{equation}\label{eq.Gamma}
\begin{aligned}
    \Gamma_{12}(x,\zeta) & +F(x+\zeta) \\
        & -\int_{0}^{\infty}\int_{0}^{\infty}
            \Gamma_{12}(x,t_{1}) F(x+t_{1}+t_{2})F(x+t_{2}+\zeta)\,dt_{1}\,dt_{2}
        =0.
\end{aligned}
\end{equation}
Similar reasoning results in the following equation for $\Gamma^{\#}_{12}$:
\begin{equation}\label{eq.GammaSharp}
\begin{aligned}
    \Gamma^{\#}_{12}(x,\zeta) & +F^{\#}(x+\zeta) \\
        & -\int_{-\infty}^{0}\int_{-\infty}^{0}
            \Gamma^{\#}_{12}(x,t_{1}) F^{\#}(x+t_{1}+t_{2}) F^{\#}(x+t_{2}+\zeta)\,dt_{1}\,dt_{2}
        =0.
\end{aligned}
\end{equation}

We now show that, for each $x$, equations~\eqref{eq.Gamma} and
\eqref{eq.GammaSharp} have unique solutions $\Gamma_{12}(x,~\cdot~)$ and
$\Gamma^{\#}_{12}\left(  x,~\cdot~\right)  $ belonging to $X_{0}^{+}$ and
$X_{0}^{-}$ and then take (cf.~Proposition~3.9 of Paper~I)
\begin{align}
    w(x)       &  :=-\Gamma_{12}(x,0),\label{eq.w}\\
    w^{\#}(x)  &  :=\Gamma^{\#}_{12}(x,0) \label{eq.wsharp}%
\end{align}
as putative right and left Riccati representatives of a potential~$q$ to be found. Denote by $(X_{\mathbb{R}})_1$ the set of real-valued functions $F\in X$ such that $|\widehat F|<1$~a.e.

\begin{proposition}\label{pro.Gamma}
For each $x\in\mathbb{R}$, the equations (\ref{eq.Gamma})
and (\ref{eq.GammaSharp}) have unique solutions respectively in $X_{0}^{+}$
and $X_{0}^{-}$. These solutions depend therein continuously on $x\in\mathbb{R}$ and $F,F^{\#}\in (X_{\mathbb{R}})_1$, and can be written as
\begin{align*}
    \Gamma_{12}(x,\zeta)       &  =-F(x+\zeta)-G(x,\zeta)\\
    \Gamma^{\#}_{12}(x,\zeta)  &  =-F^{\#}(x+\zeta)-G^{\#}(x,\zeta)
\end{align*}
where $G$ (resp. $G^{\#}$) is jointly continuous in $x$ and $\zeta$. Moreover,
the functions $w$ and $w^{\#}$ defined by (\ref{eq.w}) and (\ref{eq.wsharp})
have the following properties:

\begin{itemize}
\item[(1)] For any $c\in\mathbb{R}$, $w\in X_{c}^{+}$ and $w^{\#}\in X_{c}^{-}$.
\item[(2)] The maps
\[
    (X_{\mathbb{R}})_1 \ni F \mapsto w \in X_{c}^{+}
\]
and
\[
    (X_{\mathbb{R}})_1 \ni F^{\#} \mapsto w^{\#} \in X_{c}^{-}\newline%
\]
are continuous for each $c\in\mathbb{R}$.
\item[(3)] $w^{\#}-w$ is a continuous function.
\end{itemize}
\end{proposition}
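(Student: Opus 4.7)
The plan is to adapt the techniques of Proposition~3.9 of Paper~I to the present setting. I focus on equation~\eqref{eq.Gamma}; the analysis of~\eqref{eq.GammaSharp} is completely analogous after reflection across the origin. Recast~\eqref{eq.Gamma} as the operator equation $(I - M_x^2)\Gamma_{12}(x,\cdot) = -F_x$ on $L^2(\mathbb{R}^+)$, where $F_x(\zeta) := F(x+\zeta)$ and $M_x$ is the self-adjoint integral operator on $L^2(\mathbb{R}^+)$ with kernel $F(x+t+\zeta)$. Via the Fourier transform~\eqref{eq.Fourier}, $M_x$ is unitarily equivalent to a Hankel-type compression of multiplication by $e^{2ikx}\widehat F(k)$, so $\|M_x\| \le \|\widehat F\|_{L^\infty} \le 1$ for every $F \in (X_{\mathbb{R}})_1$.

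The main obstacle is to prove invertibility of $I - M_x^2$. Since the hypothesis $|\widehat F| < 1$ a.e.\ does not yield a strict bound $\|M_x\|<1$, a Neumann series is unavailable; instead, self-adjointness of $M_x$ reduces the problem to ruling out $\pm 1$ as eigenvalues. If $M_x\phi = \pm\phi$ for some nonzero $\phi \in L^2(\mathbb{R}^+)$, extending $\phi$ by zero to $\mathbb{R}$ and transferring the eigenvalue identity to the Fourier side produces a relation of the form $\widehat\phi(-k) \mp e^{2ikx}\widehat F(k)\widehat\phi(k) \in H^2_+(\mathbb{R})$; combined with $|\widehat F|<1$ a.e., the vanishing lemma of Deift--Trubowitz (as used in the proof of Proposition~\ref{prop.one-to-one}) forces $\phi \equiv 0$. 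Continuous dependence of $R_x := (I-M_x^2)^{-1}$ on $x$ and on $F \in (X_\mathbb{R})_1$ then follows from operator-norm continuity of $F \mapsto M_x$ together with a standard resolvent perturbation argument on compact subsets.

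With the solution written as $\Gamma_{12}(x,\cdot) = -F_x - G(x,\cdot)$, where $G(x,\cdot) := M_x^2 R_x F_x$, joint continuity of $G$ in $(x,\zeta)$ follows from the observation that the kernel of $M_x^2$, namely $\int_0^\infty F(x+t+s)F(x+s+\zeta)\,ds$, is jointly continuous in $(x,t,\zeta)$ and decays as $x \to +\infty$ because $F \in L^1 \cap L^2$ (by Young's convolution inequality and dominated convergence). This smoothing property, combined with the continuity of $R_x F_x$ in $L^2(\mathbb{R}^+)$ as a function of $x$, yields the required joint continuity of $G$.

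To conclude properties (1)--(3): property~(1) holds because $F \in X$ restricts automatically to $X_c^+$, while for $G(\cdot,0)$ one obtains $L^1$ and $L^2$ bounds on $(c,\infty)$ from Young-type convolution estimates applied to the explicit formula for $G$, together with the tail decay $\|F_x\|_{L^2(\mathbb{R}^+)} \to 0$ as $x \to +\infty$. Property~(2) is inherited from operator-norm continuity of $F \mapsto M_x$ and the product structure of $G$. For~(3), continuity of $w^{\#} - w$ on $\mathbb{R}$ reduces to continuity of $G(\cdot,0) + G^{\#}(\cdot,0)$ together with continuity of $F + F^{\#}$; the latter cancellation is ensured by the construction $r^{\#} = \mathcal{I}r$, which makes the potential jump discontinuities of $F$ and $F^{\#}$ at the origin cancel in the sum.
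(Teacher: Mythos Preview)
Your approach follows the same outline as the paper's (recast as $(I-T_F(x)^2)\Gamma_{12}=-F_x$, rule out $\pm1$ as eigenvalues via a vanishing-lemma/Marchenko argument, then deduce the properties of $w$), but there is a genuine gap in the invertibility step. Self-adjointness together with $\|M_x\|\le1$ and the absence of $\pm1$ as \emph{eigenvalues} does \emph{not} imply that $I-M_x^2$ is invertible: $\pm1$ could still lie in the continuous spectrum. What is missing is the compactness of $M_x$ on $L^2(\mathbb{R}^+)$ (and on $L^1(\mathbb{R}^+)$), which the paper establishes by an approximation argument with $F\in C_0^\infty$; only then does the Fredholm alternative apply. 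Compactness is also what makes the spectrum of $M_x$ a discrete set accumulating only at $0$, so that once $\pm1$ are excluded one actually has $\sigma(M_c)\subset[-\rho_c,\rho_c]$ for some $\rho_c<1$. Combined with the monotonicity of $x\mapsto\|M_x\|$, this gives the uniform bound $\|M_x\|\le\rho_c$ for all $x\ge c$, which is precisely what drives the $X_c^+$ estimates in (1)--(2). Your ``resolvent perturbation on compact subsets'' and ``tail decay'' remarks do not substitute for this uniform bound on the unbounded half-line.

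Your argument for (3) is also too vague to count as a proof. The functions $F,F^\#\in X$ need not be continuous anywhere, so speaking of ``jump discontinuities at the origin cancelling'' is not meaningful. The correct mechanism is the identity
\[
    F(x)+F^\#(x)=\frac{1}{\pi}\int_{\mathbb{R}}\bigl(r(k)+r^\#(-k)\bigr)e^{2ikx}\,dk
        =\frac{1}{\pi}\int_{\mathbb{R}}\Bigl(1-\tfrac{t(k)}{t(-k)}\Bigr)r(k)\,e^{2ikx}\,dk,
\]
and the observation that $1-t(k)/t(-k)\in\widehat X\subset L^2$ and $r\in\widehat X\subset L^2$, so their product lies in $L^1(\mathbb{R})$; hence $F+F^\#$ is continuous by Riemann--Lebesgue.
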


\begin{proof}
The proofs of all statements but (3) are almost identical to the proofs of
Lemmas~4.1--4.3 in Paper~I. We will only discuss the few changes needed in
this case, using the notation of Paper~I, and we will only discuss
(\ref{eq.Gamma}) and (\ref{eq.w}) since the corresponding proofs for
(\ref{eq.GammaSharp}) and (\ref{eq.wsharp}) are very similar.

If we introduce the linear operator~$T_{F}(x)$ on~$X_{0}^{+}$ by%
\[
    T_{F}(x)\psi(y):=\int_{0}^{\infty}\psi(t)F(x+y+t)~dt,
\]
then equation (\ref{eq.Gamma}) takes the form
\[
    (I-T^{2}_{F}(x)) \Gamma_{12}(x,\,\cdot\,) = - F(x+\,\cdot\,),
\]
and thus to solve~\eqref{eq.Gamma} we need to study properties of~$T_{F}(x)$.
This operator obeys the estimates (4.3) in Paper~I except that the inequality
\begin{equation}\label{eq.TFx}
    \left\Vert T_{F}(x)\right\Vert _{L^{2}\rightarrow L^{2}} \le
    \left\Vert r\right\Vert _{\infty}
\end{equation}
in view of the relation $r(0)=-1$ does not show that the $L^2$-norm of $T_{F}(x)$ is less than~$1$. We shall use a different argument proving that~$I- T^2_{F}(x)$ is invertible in~$X^+_0$ and that for every~$c\in\mathbb{R}$ there exists a constant~$\rho_c<1$ such that $\|T_{F}(x)\|_{L^{2}\rightarrow L^{2}}<\rho_c$ for all~$x\in \mathbb{R}^+_c$.

To do this, one first shows that $\ker_{L^{2}}(I\pm T_{F}(x))$ is trivial, exploiting the fact that $\left\vert r(k)\right\vert <1$ a.e.\ (see, for example, the
proof of Lemma~6.4.1 in~\cite{Marchenko:1977}). Since $T_{F}(x)$ maps
boundedly $L^{1}(\mathbb{R}^{+})$ into $L^{2}(\mathbb{R}^{+})$, we also have
that $\ker_{L^{1}}(I\pm T_{F}(x))$ is trivial. The operator~$T_{F}(x)$ is compact in $L^1(\mathbb{R}^+)$ and in~$L^2(\mathbb{R}^+)$; this is obviously true if $F\in C_{0}^{\infty}(\mathbb{R})$, hence follows for an arbitrary $F\in X$ by the norm-closure of the compact operators and an approximation argument (based on the $L^1$-norm estimate~(4.3) of Paper~I and the inequality~\eqref{eq.TFx} together with the fact that~$\|r\|_\infty\le \|F\|_X$). The Fredholm alternative implies that the operator~$I-T_{F}^2(x)$ is boundedly invertible in~$X^+_0$.

Next, since $T_F(x)$ is a self-adjoint operator in~$L^2(\mathbb{R^+})$ and $\left\Vert T_{F}(x)\right\Vert_{L^{2}\rightarrow L^{2}} \le1$ by~\eqref{eq.TFx}, for every $c \in \mathbb{R}$ there exists $\rho_c<1$ such that the spectrum of $T_F(c)$ belongs to~$[-\rho_c,\rho_c]$. The explicit dependence of~$T_{F}(x)$ on~$x$ shows that the {$L^2$-norm} of~$T_{F}(x)$ is a non-increasing function of~$x$ and thus
\[
    \left\Vert T_{F}(x)\right\Vert _{L^{2}\rightarrow L^{2}} \le \rho_c
\]
for all $x \in \mathbb{R}^+_c$ as claimed.

One then follows the proofs of Lemmas 4.1--4.3 of
Paper~I (replacing therein $\|r\|_\infty$ and $\rho$ with $\rho_c$) to obtain the claimed properties of $\Gamma_{12}$ and $w$.

To prove statement (3) we note that%
\begin{align*}
    w(x)      &  =F(x)+G(x,0)\\
    w^{\#}(x) &  =-F^{\#}(x)-G^{\#}(x,0)
\end{align*}
so it suffices to show that $F+F^{\#}$ is continuous. By~(\ref{eq.F})--(\ref{eq.Fsharp}),
\begin{align*}
    F(x) + F^{\#}(x)
        & =\frac{1}{\pi}\int_{-\infty}^{\infty}
            \left(r(k)+r^{\#}(-k)\right)  e^{2ikx}\,dk\\
        & =\frac{1}{\pi}\int_{-\infty}^{\infty}
            \left(  1-\frac{t(k)}{t(-k)}\right) r(k)e^{2ikx}\,dk.
\end{align*}
Since the function~$t(k)/t(-k)$ is in~$1\dotplus\widehat{X}$ and tends to $1$ at infinity, it follows that $\left(1-\frac{t(k)}{t(-k)}\right)  r(k)\in L^{1}(\mathbb{R})$ and $F+F^{\#}$ is continuous.
\end{proof}

Formula~\eqref{eq.Gamma.11} and its analogue for the kernel~$\Gamma^{\#}$ show that the entries $\Gamma_{11}$ and $\Gamma^{\#}_{11}$ of the solutions $\Gamma$ and $\Gamma^{\#}$ have similar continuous dependence on~$F$ and $F^{\#}$. Also, the symmetry of $\Omega$ yields the relations $\Gamma_{21}=\Gamma_{12}$ and $\Gamma_{22}=\Gamma_{11}$; similar arguments give the equalities $\Gamma^{\#}_{21}=\Gamma^{\#}_{12}$ and $\Gamma^{\#}_{22}=\Gamma^{\#}_{11}$.

\subsection{Jost solutions}\label{subsec.recon.jost}

In this section we shall show that the matrix-valued kernels $\Gamma$ and
$\Gamma^{\#}$ may be used as in~\eqref{eq.Psi+.rep} and \eqref{eq.Psi-.rep}
to construct solutions of the ZS-AKNS systems~\eqref{eq.AKNS} with $u=w$ and
$u=w^{\#}$. We start with the following observation.

\begin{lemma}\label{lemma.Gamma.partial}
Assume that $r\in \mathcal{R}_{>0}$ is such that the corresponding function~$F$ belongs to the Sobolev space~$H^2(\mathbb{R})$. Then the solutions~$\Gamma_{11}$ and $\Gamma_{12}$ of
the system~\eqref{eq.Gamma.11}--\eqref{eq.Gamma.12} satisfy the system of equations
\begin{align}\label{eq.Gamma.11.partial}
    \frac{\partial}{\partial x}\Gamma_{11}(x,\zeta)
        & = w(x) \Gamma_{12}(x,\zeta),\\
    \biggl(\frac{\partial}{\partial x} - \frac{\partial}{\partial\zeta}\biggr) \Gamma_{12}(x,\zeta)
        & = w(x) \Gamma_{11}(x,\zeta).
    \label{eq.Gamma.21.partial}%
\end{align}
\end{lemma}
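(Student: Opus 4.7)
The plan is to differentiate the integral equations \eqref{eq.Gamma.11}--\eqref{eq.Gamma.12} with respect to $x$ and $\zeta$, assemble the resulting identities into a homogeneous pair of integral equations for the defect functions
\[
    A_1(x,\zeta):=\partial_x\Gamma_{11}(x,\zeta)-w(x)\Gamma_{12}(x,\zeta),
    \qquad
    A_2(x,\zeta):=(\partial_x-\partial_\zeta)\Gamma_{12}(x,\zeta)-w(x)\Gamma_{11}(x,\zeta),
\]
and then conclude $A_1\equiv A_2\equiv0$ by invoking the invertibility of $I-T_F^2(x)$ on $X_0^+$ established in the proof of Proposition~\ref{pro.Gamma}.

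The preliminary step is to verify, using $F\in H^2(\mathbb{R})$, that $\Gamma_{11}(x,\cdot)$ and $\Gamma_{12}(x,\cdot)$ are continuously differentiable in $x$ with derivatives in $X_0^+$, and that $\Gamma_{12}(x,\cdot)$ also admits a $\zeta$-derivative in $X_0^+$. Both $F$ and $F'$ belong to $X=L^1\cap L^2$, so formal differentiation of the equation $(I-T_F^2(x))\Gamma_{12}(x,\cdot)=-F(x+\cdot)$ yields right-hand sides in $X_0^+$; applying $(I-T_F^2(x))^{-1}$, whose continuity is a byproduct of Proposition~\ref{pro.Gamma}, produces the required derivatives and their continuity in $x$. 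The representation $\Gamma_{12}(x,\zeta)=-F(x+\zeta)-G(x,\zeta)$ and the corresponding one for $\Gamma_{11}$ guarantee in particular that these derivatives decay at $+\infty$.

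For the second identity, apply $\partial_x-\partial_\zeta$ to \eqref{eq.Gamma.12}. Since $(\partial_x-\partial_\zeta)$ annihilates every function of $x+\zeta$, it kills $F(x+\zeta)$ and $F(x+t+\zeta)$; combined with $\partial_\zeta\Gamma_{11}(x,t)=0$, this leaves
\[
    (\partial_x-\partial_\zeta)\Gamma_{12}(x,\zeta)+\int_0^\infty\partial_x\Gamma_{11}(x,t)\,F(x+t+\zeta)\,dt=0.
\]
Subtracting $w(x)\Gamma_{11}(x,\zeta)=-\int_0^\infty w(x)\Gamma_{12}(x,t)F(x+t+\zeta)\,dt$ (a consequence of \eqref{eq.Gamma.11}) gives $A_2(x,\cdot)+T_F(x)A_1(x,\cdot)=0$. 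For the first identity, apply $\partial_x$ to \eqref{eq.Gamma.11} and rewrite $F'(x+t+\zeta)=\partial_tF(x+t+\zeta)$; integration by parts in $t$ produces the boundary value $-\Gamma_{12}(x,0)F(x+\zeta)=w(x)F(x+\zeta)$, while the boundary term at $t=+\infty$ vanishes because $\Gamma_{12}(x,\cdot)\in X_0^+$ is continuous with limit $0$ at $+\infty$ and $F\in H^2\subset C_0(\mathbb{R})$. Eliminating $F(x+\zeta)$ by means of \eqref{eq.Gamma.12} and collecting terms then yields $A_1(x,\cdot)+T_F(x)A_2(x,\cdot)=0$.

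Combining the two relations gives $(I-T_F^2(x))A_1(x,\cdot)=0$ in $X_0^+$, so $A_1(x,\cdot)\equiv0$ by the invertibility of $I-T_F^2(x)$, and then $A_2(x,\cdot)=-T_F(x)A_1(x,\cdot)\equiv0$ as well. This is precisely \eqref{eq.Gamma.11.partial}--\eqref{eq.Gamma.21.partial}. The core algebra is driven entirely by the facts that $(\partial_x-\partial_\zeta)F(x+t+\zeta)=0$ and that integration by parts against $\partial_tF(x+t+\zeta)$ produces the boundary value $-\Gamma_{12}(x,0)=w(x)$. The chief technical obstacle is therefore the preliminary regularity step: justifying differentiation under the integral sign, identifying the function spaces in which $\partial_x\Gamma_{jk}$ and $\partial_\zeta\Gamma_{12}$ live, and verifying vanishing of the boundary term at $t=+\infty$. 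This is exactly where the hypothesis $F\in H^2(\mathbb{R})$ enters.
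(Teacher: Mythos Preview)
Your proof is correct and follows essentially the same approach as the paper: differentiate the integral equations, integrate by parts to produce the boundary term $-\Gamma_{12}(x,0)=w(x)$, and then invoke the invertibility of $I-T_F^2(x)$ on $X_0^+$ from Proposition~\ref{pro.Gamma}. Your packaging via the coupled pair $A_1+T_F(x)A_2=0$, $A_2+T_F(x)A_1=0$ is a slightly cleaner way of organizing the same computation the paper carries out by substituting \eqref{eq.Gamma.21.part-aux} back into the differentiated form of \eqref{eq.Gamma.11} and comparing with \eqref{eq.Gamma}.
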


\begin{proof}
Under the assumptions of the lemma, the solutions~$\Gamma_{11}$ and $\Gamma_{12}$ are continuously differentiable in $x$ and $\zeta$. Indeed, smoothness in $\zeta$ is obtained immediately from the equations~\eqref{eq.Gamma.11} and~\eqref{eq.Gamma.12}, while the way $T_{F}(x)$ depends on~$x$ and the arguments given in the proof of Proposition~\ref{pro.Gamma} show continuous differentiability in~$x$.

Differentiating now~\eqref{eq.Gamma.11} in~$x$ and then integrating by parts on
account of~\eqref{eq.w} yields
\[
    \frac{\partial}{\partial x}\Gamma_{11}(x,\zeta)
        = -w(x)F(x+t) - \int_{0}^{\infty}
            \biggl(\frac{\partial\Gamma_{12}}{\partial x}
                - \frac {\partial\Gamma_{12}}{\partial\zeta}\biggr) (x,t)F(x+t+\zeta)\,dt .
\]
On the other hand, we find from equation~\eqref{eq.Gamma.12} that
\begin{equation}\label{eq.Gamma.21.part-aux}
    \biggl(\frac{\partial}{\partial x} - \frac{\partial}{\partial\zeta}\biggr) \Gamma_{12}(x,\zeta)
        = - \int_{0}^{\infty}\frac{\partial\Gamma_{11}}{\partial x} (x,t)F(x+t+\zeta)\,dt,
\end{equation}
so that
\[
    \frac{\partial}{\partial x}\Gamma_{11}(x,\zeta)
        = -w(x)F(x+t)  + \int_{0}^{\infty}\int_{0}^{\infty}
            \frac{\partial\Gamma_{11}} {\partial x}(x,t_{1})
            F(x+t_{1}+t_{2})F(x+t_{2}+\zeta)\,dt_{1}\,dt_{2}.
\]
Recalling now~\eqref{eq.Gamma}, we see that the function
\[
    N(x,\zeta)
        := \frac{\partial}{\partial x}\Gamma_{11}(x,\zeta)
            - w(x)\Gamma_{12}(x,\zeta)
\]
satisfies the relation
\[
    N(x,\zeta) - \int_{0}^{\infty}\int_{0}^{\infty}N(x,t_{1})
            F(x+t_{1}+t_{2})F(x+t_{2}+\zeta)\,dt_{1}\,dt_{2} =0.
\]
The analysis of the spectral properties of the operator~$T_{F}(x)$ given in
the proof of Proposition~\ref{pro.Gamma} implies that the above equation can
have only the trivial solution. Therefore~$N\equiv0$,
and~\eqref{eq.Gamma.11.partial} is established.

Using now~\eqref{eq.Gamma.11.partial} and \eqref{eq.Gamma.11} in
\eqref{eq.Gamma.21.part-aux}, we find that
\begin{align*}
    \biggl(\frac{\partial}{\partial x}
        - \frac{\partial}{\partial\zeta}\biggr)
            \Gamma_{12}(x,\zeta)
        & = -w(x) \int_{0}^{\infty}\Gamma_{12}(x,t)F(x+t+\zeta)\,dt\\
        & = w(x) \Gamma_{11}(x,\zeta)
\end{align*}
as claimed.
\end{proof}

Define now a vector-valued function $\bm{\psi}=(\psi_{1},\psi_{2})^{T}$ via
(cf.~\eqref{eq.Psi+.rep})
\[
    \bm{\psi}(x,k) := e^{ikx} \biggl[ \binom{1}{0}
        +  \int_{0}^{\infty}\bm{\Gamma}_{1}(x,\zeta)e^{2ik\zeta}\,d\zeta\biggr],
\]
where $\bm{\Gamma}_{1}:=(\Gamma_{11},\Gamma_{21})^{T}$ is the first column of the
matrix~$\Gamma$ solving~\eqref{eq.GLM}.

\begin{lemma}\label{lemma.AKNS}
The function~$\bm{\psi}$ solves the ZS-AKNS system~\eqref{eq.AKNS} with $z=k$ and
$u=w$.
\end{lemma}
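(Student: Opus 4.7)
The plan is to verify the two scalar identities equivalent to the matrix system~\eqref{eq.AKNS} with $z=k$ and $u=w$, namely
\[
    \psi_1'(x,k) = ik\,\psi_1(x,k) + w(x)\,\psi_2(x,k),
    \qquad
    \psi_2'(x,k) = -ik\,\psi_2(x,k) + w(x)\,\psi_1(x,k),
\]
first in the smoothness regime where Lemma~\ref{lemma.Gamma.partial} applies, and then pass to general $r\in\mathcal{R}_{>0}$ by approximation. Throughout, I would use the symmetry of the matrix $\Omega$ noted at the end of \S\ref{subsec.GLM}, which gives $\Gamma_{21}=\Gamma_{12}$, so that
\[
    \psi_2(x,k) = e^{ikx}\int_0^\infty \Gamma_{12}(x,\zeta)\,e^{2ik\zeta}\,d\zeta
\]
and the boundary value $\Gamma_{21}(x,0)=\Gamma_{12}(x,0)=-w(x)$ is available by~\eqref{eq.w}.

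Assume first that $F\in H^2(\mathbb{R})$, so that $\Gamma_{11}$ and $\Gamma_{12}$ satisfy the identities~\eqref{eq.Gamma.11.partial}--\eqref{eq.Gamma.21.partial}. For the first coordinate I would differentiate under the integral sign:
\[
    \psi_1'(x,k)
        = ik\,\psi_1(x,k)
        + \int_0^\infty \partial_x\Gamma_{11}(x,\zeta)\,e^{ik(x+2\zeta)}\,d\zeta
        = ik\,\psi_1(x,k) + w(x)\int_0^\infty \Gamma_{12}(x,\zeta)\,e^{ik(x+2\zeta)}\,d\zeta,
\]
where the second equality uses~\eqref{eq.Gamma.11.partial}; the last integral equals $\psi_2(x,k)$. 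For the second coordinate I would differentiate, add $ik\psi_2$, rewrite $2ik\,e^{2ik\zeta}=\partial_\zeta e^{2ik\zeta}$, and integrate by parts in $\zeta$. The boundary term at $\zeta=\infty$ vanishes because $\Gamma_{12}(x,\,\cdot\,)\in X_0^+$ and the integrand is continuous, while the boundary term at $\zeta=0$ contributes $-e^{ikx}\Gamma_{21}(x,0)=w(x)\,e^{ikx}$. What remains combines $\partial_x\Gamma_{21}-\partial_\zeta\Gamma_{21}=w\Gamma_{11}$ by~\eqref{eq.Gamma.21.partial}, and the resulting integral is $w(x)(\psi_1(x,k)-e^{ikx})$; the $w(x)e^{ikx}$ terms cancel and yield the second desired identity.

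For general $r\in\mathcal{R}_{>0}$, I would argue by approximation: choose a sequence $F_n\in C_0^\infty(\mathbb{R})\cap (X_{\mathbb{R}})_1$ with $F_n\to F$ in $X$, a standard mollification will do since $F\in X$ and $|\widehat F|<1$ a.e.\ is preserved under suitable cutoff/mollification. By Proposition~\ref{pro.Gamma}, the associated kernels $\Gamma_{12}^{(n)}$ and $\Gamma_{11}^{(n)}$, and the Riccati candidates $w_n$, depend continuously on $F_n$ in the relevant $X_c^+$ topology; consequently the functions $\bm{\psi}_n$ built from $\Gamma^{(n)}$ converge to $\bm{\psi}$ uniformly on compact sets in $x$ for each fixed $k$. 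Each $\bm{\psi}_n$ solves the ZS-AKNS system with coefficient $w_n$ by the smooth case already handled. Writing this system in its integrated (absolutely continuous) form $\bm{\psi}_n(x,k)=\bm{\psi}_n(x_0,k)+\int_{x_0}^x(ik\sigma_3+Q_n)\bm{\psi}_n\,dt$ and passing to the limit using $w_n\to w$ in $L^2_{\mathrm{loc}}$ and uniform-on-compacts convergence of $\bm{\psi}_n$ to $\bm{\psi}$, I would conclude that $\bm{\psi}$ satisfies the limiting integrated equation, hence solves the ZS-AKNS system a.e.

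The main obstacle I anticipate is the approximation step: one has to verify that the mollified sequence $F_n$ stays inside $(X_{\mathbb{R}})_1$ (so that Proposition~\ref{pro.Gamma} applies uniformly to produce $\Gamma^{(n)}$) and that $F_n\in H^2$ so that Lemma~\ref{lemma.Gamma.partial} is applicable to each $\Gamma^{(n)}$. The $H^2$ requirement is automatic for smooth compactly supported $F_n$; the $|\widehat{F_n}|<1$ condition requires a small multiplicative damping or truncation, but can be arranged since $\|F_n-F\|_X\to 0$ forces $\|\widehat{F_n}-\widehat F\|_\infty\to 0$, while $|\widehat F|<1$ for $k\neq 0$ and $\widehat F(0)=-1$ is a point condition that can be preserved or handled separately. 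The remainder of the argument is a direct integration-by-parts computation together with standard continuous dependence for linear ODE systems with $L^2_{\mathrm{loc}}$ coefficients.
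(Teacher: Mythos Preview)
Your proposal is correct and follows essentially the same route as the paper: verify the two scalar ZS-AKNS identities by differentiating under the integral sign, using the transport relations~\eqref{eq.Gamma.11.partial}--\eqref{eq.Gamma.21.partial} and an integration by parts in~$\zeta$ with boundary value~$\Gamma_{12}(x,0)=-w(x)$, and then pass to the general case by approximating~$F$ and invoking the continuous dependence in Proposition~\ref{pro.Gamma}.

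One small simplification: you do not need $F_n\in C_0^\infty$, only $F_n\in H^2(\mathbb{R})$ as in Lemma~\ref{lemma.Gamma.partial}. Pure mollification $F_n=F\ast\varphi_n$ with $\varphi_n\ge0$, $\int\varphi_n=1$, already gives $F_n\in H^2$ and, since $\widehat{F_n}=r\,\widehat{\varphi}(\cdot/n)$ with $|\widehat{\varphi}|\le1$, the condition $|\widehat{F_n}|\le|r|<1$ a.e.\ is automatic; no damping or truncation is needed. This is precisely the approximation the paper uses (it even goes on to verify $r_n\in\mathcal{R}_{>0}$, though it notes this stronger fact is not required here).
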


\begin{proof}
We remark that since the entries~$\Gamma_{12}$ and~$\Gamma_{21}$ of $\Gamma$ coincide, we can freely interchange them as needed. Assume first that $F$ is as in Lemma~\ref{lemma.Gamma.partial}. Differentiation of the expression for~$\psi_{1}$ on account of the
relation~\eqref{eq.Gamma.11.partial} results in
\[
    \psi_{1}^{\prime}= ik\psi_{1} + w \psi_{2}.
\]
Similarly we find that
\begin{align*}
    \psi_{2}^{\prime}(x,k)  & + ik\psi_{2}(x,k) - w(x) \psi_{1}(x,k)\\
        & = -w(x) e^{ikx} + e^{ikx} \int_{0}^{\infty}
            \Bigl[2ik \Gamma_{12}+ \frac{\partial\Gamma_{12}}{\partial x}
            -w(x) \Gamma_{11}\Bigr](x,\zeta) e^{2ik\zeta}\,d\zeta\\
        & = -w(x) e^{ikx} + e^{ikx} \int_{0}^{\infty}
            \Bigl[2ik \Gamma_{12}(x,\zeta)e^{2ik\zeta} + \frac{\partial\Gamma_{12}}{\partial\zeta}(x,\zeta)
            e^{2ik\zeta}\Bigr]\,d\zeta\\
        & = -w(x) e^{ikx} + e^{ikx} \Gamma_{12}(x,\zeta)
            e^{2ik\zeta}\Bigr|_{\zeta=0}^{\infty} =0
\end{align*}
in view of~\eqref{eq.w} and the fact that $\Gamma_{12}(x,\zeta) \to0$ as
$\zeta\to\infty$ by virtue of~\eqref{eq.Gamma.12}. Therefore the lemma is proved for $F\in H^2(\mathbb{R})$.

Assume now that a real-valued function~$F\in X$ corresponds to a generic~$r\in\mathcal{R}_{>0}$. We approximate $F$ by a sequence of $F_n\in H^2(\mathbb{R})$ as follows. Let $\varphi$ be a nonnegative function in~$C_0^\infty(\mathbb{R})$ with $\int\varphi\,dx=1$ and set $\varphi_n(x):=n\varphi(nx)$ and $F_n:= F \ast \varphi_n$ for $n\in\mathbb{N}$, where~$\ast$ denotes the convolution. Then, clearly, $F_n$ belongs to~$H^2(\mathbb{R})$ and converges to $F$ in~$X$ as $n\to\infty$. Moreover, the Fourier transform $r_n:=\widehat{F}_n$ of $F_n$ is equal to $r \widehat{\varphi}_n$ and, since $|\widehat \varphi_n|\le1$, we find that $r_n$ belongs to~$\mathcal{R}$.

To show that $r_n \in \mathcal{R}_{>0}$, we set
\(
        \psi(k) := \bigl(1-|\widehat\varphi(k)|^2\bigr)/k^2
\)
and observe that
\[
        \psi_n(k) := \frac{1-|\widehat\varphi_n(k)|^2}{k^2}
            = \frac{\psi(k/n)}{n^2}.
\]
Therefore if we prove that $\psi \in \widehat X$, then the relation
\[
    \widetilde{r}_n(k) = \frac{1 - |r_n(k)|^2}{k^2} = \widetilde r(k)
            + |r(k)|^2 \psi_n(k)
\]
and the inequality $\psi_n(0)\ge0$ will imply that $\widetilde r_n \in \widehat{X}$ and $\widetilde r_n(0)>0$, i.e., that $r_n\in\mathcal{R}_{>0}$.

We observe first that $\widehat\varphi$ is of the Schwartz class and thus the same is true of~$|\widehat\varphi(k)|^2=\widehat\varphi(k) \widehat\varphi(-k)$. Next, the function $1-|\hat\varphi|^2$ has zero of order 2 at $k=0$ and thus $\psi$ belongs to~$C^\infty(\mathbb{R})$. The behaviour of~$\psi$ at infinity shows that it belongs to the Sobolev space~$H^1(\mathbb{R})$. Therefore~$\widehat \psi \in L^2(\mathbb{R})$ and, moreover,
\[
     \int_{\mathbb{R}} |\widehat \psi(k)|^2(1+k^2) dk <\infty,
\]
which by the Cauchy--Schwarz inequality yields $\widehat\psi\in L^1(\mathbb{R})$; hence $\psi\in \widehat X$ as required. (We note in passing that in fact we have the convergence of $r_n$ to $r$ in the topology of~$\mathcal{R}_{>0}$, although this fact is not needed.)

Now, for every $n\in\mathbb{N}$, we denote by $w_n$ the function of~\eqref{eq.w} corresponding to the solution $\bm{\Gamma}_{n}$ of~\eqref{eq.GLM} with $\Omega$ constructed for~$F_n$ instead of~$F$ and by $Q_n$ the matrix of~\eqref{eq.sigmaQ} with $u=w_n$. By Proposition~\ref{pro.Gamma}, the matrix-valued functions $Q_n$ converge to $Q$ in $X_c^+$ componentwise as $n\to\infty$, while the functions $\bm{\psi}_n(\cdot,k)$ converge to~$\bm{\psi}$ in the uniform topology on $(c,\infty)$, for every $c,\, k\in\mathbb{R}$. It follows from~\eqref{eq.AKNS} that, on every compact $x$-interval~$\Delta$, the functions $\frac{d}{dx}\bm{\psi}_n$ converge in the topology of $L^1(\Delta)\cap L^2(\Delta)$. It follows that, for every fixed $k\in\mathbb{R}$, $\bm{\psi}_n$ converge in $W^{1,1}(\Delta)\cap W^{1,2}(\Delta)$ to $\bm{\psi}$ and that $\bm{\psi}$ satisfies the ZS-AKNS system~\eqref{eq.AKNS}. The proof is complete.
\end{proof}

Set $K:=\Gamma_{11}+\Gamma_{21}$; then $K$ satisfies the
Gelfand--Levitan--Marchenko equation
\begin{equation}\label{eq.GLM.K}
    K(x,\zeta)+F(x+\zeta)+\int_{0}^{\infty}K(x,t)F(x+t+\zeta)\,dt = 0
\end{equation}
for $\zeta>0$, and the function $f:= \psi_{1} + \psi_{2}$ has the representation
\[
    f(x,k) := e^{ikx}\left(  1+\int_{0}^{\infty}K(x,\zeta)e^{2ik\zeta}
            \,d\zeta\right).
\]
By the preceding lemma the vector-valued function $\bm{\psi}$ solves the ZS-AKNS system~\eqref{eq.AKNS} with $u=w$; therefore, as explained in Subsection~\ref{subsec.jost}, the above function~$f$ solves the Schr\"{o}dinger equation $-y^{\prime\prime}+ q y = k^{2} y$
with $q=w^{\prime}+w^{2}$. In other words, we arrive at the following conclusion.

\begin{proposition}\label{pro.f+}
The function $f$ is the ``right'' Jost solution for the half-line
Schr\"{o}dinger operators with the ``right'' Riccati representative~$w$.
\end{proposition}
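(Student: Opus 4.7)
The plan is to verify that $f = \psi_1 + \psi_2$ satisfies the asymptotic condition~\eqref{eq.Jost+} at~$+\infty$ that characterizes the right Jost solution. By Lemma~\ref{lemma.AKNS} and the ZS-AKNS--Schr\"odinger correspondence recalled in~\S\ref{subsec.jost}, $f$ already solves $-y'' + (w' + w^2)y = k^2 y$ and its quasi-derivative (relative to~$w$) is
\[
    f^{[1]}(x,k) = ik\bigl(\psi_1(x,k) - \psi_2(x,k)\bigr).
\]
From the representation of $\bm{\psi}$ via $\Gamma_{11}$ and $\Gamma_{21}$, both asymptotic relations $f(x,k) - e^{ikx} \to 0$ and $f^{[1]}(x,k) - ik\,e^{ikx} \to 0$ as $x \to +\infty$ reduce to showing that $\int_0^\infty \Gamma_{11}(x,\zeta)e^{2ik\zeta}\,d\zeta$ and $\int_0^\infty \Gamma_{21}(x,\zeta)e^{2ik\zeta}\,d\zeta$ vanish as $x \to +\infty$; recall that $\Gamma_{21} = \Gamma_{12}$ by symmetry of~$\Omega$. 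Since $|\widehat g(k)| \le \|g\|_{L^1}$, it will in fact suffice to show that $\|\Gamma_{11}(x,\,\cdot\,)\|_{X_0^+}$ and $\|\Gamma_{12}(x,\,\cdot\,)\|_{X_0^+}$ both tend to zero as $x\to+\infty$.

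The core of the argument is to establish this decay for $\Gamma_{12}(x,\,\cdot\,)$. First I would apply Schur's test to the integral operator $T_F(x)$ with kernel $F(x+y+t)$ on $L^2(\mathbb{R}^+)$; since both the row and column integrals of the kernel are bounded by $\int_x^\infty |F(s)|\,ds$, this gives
\[
    \|T_F(x)\|_{L^2(\mathbb{R}^+)\to L^2(\mathbb{R}^+)} \le \int_x^\infty |F(s)|\,ds,
\]
and an analogous bound on $\|T_F(x)\|_{L^1\to L^1}$ follows by a direct Fubini estimate. Since $F \in L^1(\mathbb{R})\cap L^2(\mathbb{R})$, both right-hand sides tend to~$0$ as $x \to +\infty$, so for $x$ large enough one has $\|T_F(x)\|_{X_0^+ \to X_0^+} < 1$, and a Neumann series yields invertibility of $I - T_F^2(x)$ on $X_0^+$ with uniformly bounded inverse. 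Applying this inverse to the defining equation $(I - T_F^2(x))\Gamma_{12}(x,\,\cdot\,) = -F(x+\,\cdot\,)$ and using $\|F(x+\,\cdot\,)\|_{X_0^+} \to 0$, I get $\|\Gamma_{12}(x,\,\cdot\,)\|_{X_0^+} \to 0$. Equation~\eqref{eq.Gamma.11} rewritten as $\Gamma_{11}(x,\,\cdot\,) = -T_F(x)\Gamma_{12}(x,\,\cdot\,)$ then immediately gives the analogous decay of $\Gamma_{11}(x,\,\cdot\,)$.

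The main obstacle is the uniform invertibility of $I - T_F^2(x)$ for large~$x$, but this is resolved cleanly by the operator-norm decay above; no compactness or spectral argument beyond what was already invoked in the proof of Proposition~\ref{pro.Gamma} is required. With the two Jost asymptotic relations in hand, uniqueness of the solution with the prescribed behavior at~$+\infty$ identifies $f$ with the right Jost solution for the Schr\"odinger operator $-d^2/dx^2 + w' + w^2$ on the appropriate half-line.
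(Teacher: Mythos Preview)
Your proof is correct and follows the same overall route as the paper: both rely on Lemma~\ref{lemma.AKNS} to conclude that $f=\psi_1+\psi_2$ solves the Schr\"odinger equation with potential $w'+w^2$, and both invoke the ZS-AKNS/Schr\"odinger correspondence of \S\ref{subsec.jost}. The paper, however, does not spell out the verification of the asymptotic condition~\eqref{eq.Jost+}; it simply states the proposition as a consequence of the preceding paragraph, implicitly relying on the kernel estimates already obtained (via Paper~I) in the proof of Proposition~\ref{pro.Gamma}.

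What you add is a self-contained argument for the decay $\|\Gamma_{1j}(x,\,\cdot\,)\|_{X_0^+}\to 0$ as $x\to+\infty$, using the Schur bound $\|T_F(x)\|\le \int_x^\infty |F|$ and a Neumann series for $(I-T_F^2(x))^{-1}$ for large~$x$. This is a slightly different (and in fact more elementary) device than the paper's route, which in the proof of Proposition~\ref{pro.Gamma} uses the bound~\eqref{eq.TFx} together with compactness of $T_F(x)$ and the Fredholm alternative. Your Schur estimate gives the large-$x$ decay directly without appealing to compactness; the paper's argument, on the other hand, yields invertibility of $I-T_F^2(x)$ for \emph{all}~$x$, not just large ones. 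Both are adequate here since only the behaviour as $x\to+\infty$ is needed for the Jost asymptotics.
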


Similar analysis of the kernel~$\Gamma^{\#}$ yields a kernel~$K^{\#}$
satisfying the Gelfand--Levitan--Marchenko equation
\begin{equation}\label{eq.GLM.Ksharp}
    K^{\#}(x,\zeta)
        + F^{\#}(x+\zeta)+\int_{-\infty}^{0}K(x,t)F^{\#}(x+t+\zeta)\,dt=0
\end{equation}
for $\zeta<0$ and the function
\[
    f^{\#}(x,k)
        := e^{-ikx} \left(  1+\int_{-\infty}^{0}K^{\#}(x,\zeta)e^{-2ik\zeta}
            \,d\zeta\right)
\]
solving the Schr\"{o}dinger equation $-y''+ q^{\#} y = k^{2} y$
with $q^{\#}=(w^{\#})^{\prime}+(w^{\#})^{2}$.

As with $f$, we have

\begin{proposition}\label{pro.f-}
The function $f^{\#}$ is the ``left'' Jost solution for the half-line
Schr\"{o}dinger operators with the ``left'' Riccati representative~$w^{\#}$.
\end{proposition}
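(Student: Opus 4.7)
The plan is to mirror, step by step, the argument that established Proposition~\ref{pro.f+} for the right side, making the sign and orientation adjustments forced by the fact that~$\Gamma^{\#}$ is supported on~$\mathbb{R}^{-}$ in its second variable and that~$w^{\#}(x):=\Gamma^{\#}_{12}(x,0)$ carries a $+$ sign (rather than $-$ as for~$w$).

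First I would prove the analogue of Lemma~\ref{lemma.Gamma.partial}: assuming temporarily that $F^{\#}\in H^{2}(\mathbb{R})$, the components $\Gamma^{\#}_{11}$ and $\Gamma^{\#}_{12}$ are continuously differentiable in $x$ and $\zeta$ (smoothness in~$\zeta$ coming directly from the analogues of~\eqref{eq.Gamma.11}--\eqref{eq.Gamma.12} on~$(-\infty,0)$, and smoothness in~$x$ from the operator-theoretic argument used in Proposition~\ref{pro.Gamma}). Differentiating these equations in~$x$, integrating by parts with boundary contribution at~$\zeta=0$ using $w^{\#}(x)=\Gamma^{\#}_{12}(x,0)$, and applying the uniqueness of solutions of the integral equation (which relies on the invertibility of $I-T^{2}_{F^{\#}}(x)$, established exactly as in Proposition~\ref{pro.Gamma} but for the half-line~$\mathbb{R}^{-}$), I would obtain
\begin{align*}
    \frac{\partial}{\partial x}\Gamma^{\#}_{11}(x,\zeta)
        &= -w^{\#}(x)\,\Gamma^{\#}_{12}(x,\zeta),\\
    \Bigl(\frac{\partial}{\partial x}+\frac{\partial}{\partial\zeta}\Bigr)
            \Gamma^{\#}_{12}(x,\zeta)
        &= -w^{\#}(x)\,\Gamma^{\#}_{11}(x,\zeta).
\end{align*}

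Next I would introduce the vector-valued function
\[
    \bm{\psi}^{\#}(x,k)
        := e^{-ikx}\Bigl[\binom{0}{1}
            + \int_{-\infty}^{0}\bm{\Gamma}^{\#}_{2}(x,\zeta)\,e^{-2ik\zeta}\,d\zeta\Bigr],
\]
where $\bm{\Gamma}^{\#}_{2}=(\Gamma^{\#}_{12},\Gamma^{\#}_{22})^{T}$ is the second column of~$\Gamma^{\#}$, and verify as in Lemma~\ref{lemma.AKNS} that $\bm{\psi}^{\#}$ solves the ZS-AKNS system \eqref{eq.AKNS} with $z=k$ and $u=w^{\#}$. The computation is the exact mirror of the one in Lemma~\ref{lemma.AKNS}: differentiate each component in~$x$, use the two partial-differential identities above, and eliminate a boundary term at $\zeta=0$ via the definition of $w^{\#}$ together with the fact that $\Gamma^{\#}_{12}(x,\zeta)\to 0$ as $\zeta\to-\infty$ (which follows from the analogue of~\eqref{eq.Gamma.12} for~$\Gamma^{\#}_{12}$ on~$\mathbb{R}^{-}$).

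For general $F^{\#}$ (not necessarily in $H^{2}(\mathbb{R})$), I would reuse verbatim the mollification argument from the proof of Lemma~\ref{lemma.AKNS}: approximate $F^{\#}$ by $F^{\#}_{n}:=F^{\#}\ast\varphi_{n}$, observe that the corresponding $r^{\#}_{n}\in\mathcal{R}_{>0}$ by the same Schwartz/Sobolev computation for the auxiliary function $\psi(k)=(1-|\widehat\varphi(k)|^{2})/k^{2}$, invoke the continuity assertion of Proposition~\ref{pro.Gamma} to get $w^{\#}_{n}\to w^{\#}$ in $X^{-}_{c}$ and $\bm{\psi}^{\#}_{n}\to\bm{\psi}^{\#}$ uniformly on compacta, and pass to the limit in the ZS-AKNS system in $W^{1,1}\cap W^{1,2}$ on compact $x$-intervals. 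Then the connection between ZS-AKNS and Schr\"odinger recalled in~\S\ref{subsec.jost} gives that $f^{\#}=\psi^{\#}_{1}+\psi^{\#}_{2}$ solves $-y''+q^{\#}y=k^{2}y$ with $q^{\#}=(w^{\#})'+(w^{\#})^{2}$, so it remains to verify the left-Jost asymptotics $|\,f^{\#}(x,k)-e^{-ikx}|\to 0$ as $x\to-\infty$. This follows from the analogue of~\eqref{eq.Gamma-.0}: the mapping $x\mapsto\Gamma^{\#}(x,\,\cdot\,)$ is continuous into $X^{-}_{0}\otimes M_{2}(\mathbb{C})$ with vanishing norm at $-\infty$ (obtained by the same fixed-point estimate that proves~(1) in Proposition~\ref{pro.Gamma}), whence $K^{\#}(x,\,\cdot\,)\to 0$ in $L^{1}(-\infty,0)$ as $x\to-\infty$.

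The main obstacle I anticipate is the sign-bookkeeping in the partial-differential identities for $\Gamma^{\#}$: the interchange of the roles of $w$ and $-w^{\#}$, and of $\partial_{x}-\partial_{\zeta}$ versus $\partial_{x}+\partial_{\zeta}$, must be handled carefully so that the boundary terms at $\zeta=0$ and at $\zeta=-\infty$ combine correctly. Everything else is a direct transcription of the right-side argument.
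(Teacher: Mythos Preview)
Your proposal is correct and follows exactly the approach the paper intends: the paper gives no separate proof of Proposition~\ref{pro.f-}, merely noting ``Similar analysis of the kernel~$\Gamma^{\#}$ yields\ldots'' and ``As with $f$, we have'', so your mirror of Lemmas~\ref{lemma.Gamma.partial}--\ref{lemma.AKNS} with the sign and orientation changes you describe is precisely what is being asserted.
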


\subsection{Consistency of the reconstruction}
\label{subsec.consistent}

Now we are in a position to justify the reconstruction procedure as suggested in Subsection~\ref{subsec.GLM} and prove the following:

\begin{theorem}
\label{thm.right} The Schr\"{o}dinger operator with potential~$q_0$ corresponding to
the triple
 $\bigl(  w\bigl|_{\mathbb{R^+}},w^{\#}\bigl|_{\mathbb{R^-}},(w^{\#}-w)(0)\bigr)  $
has the \textquotedblleft right\textquotedblright\ reflection coefficient $r$.
\end{theorem}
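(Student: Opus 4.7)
My plan is to apply Lemma \ref{lemma.r2} to the triple $\bigl(w|_{\mathbb{R}^+},\, w^{\#}|_{\mathbb{R}^-},\, (w^{\#}-w)(0)\bigr)$ to build the candidate potential $q_0$, to identify the functions $f$ and $f^{\#}$ from Subsection~\ref{subsec.recon.jost} with the Jost solutions of $q_0$, and finally to compute the right reflection coefficient of $q_0$ and match it with $r$. To begin, I would verify that $(w^{\#}-w)(0) > 0$, which should follow from $\widetilde r(0) > 0$ combined with the explicit formula for $w^{\#}-w$ at the origin derived in the proof of Proposition~\ref{pro.Gamma}(3). Granting this, Lemma~\ref{lemma.r2} supplies a unique $q_0 \in \mathcal{Q}_{>0}$ whose extremal Riccati representatives $u_\pm$ satisfy $u_+|_{\mathbb{R}^+} = w$, $u_-|_{\mathbb{R}^-} = w^{\#}$, and $v(0) := (u_- - u_+)(0) = (w^{\#}-w)(0)$.

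The identification of Jost solutions is immediate from uniqueness of the Schr\"odinger ODE with prescribed asymptotics: since $q_0$ coincides with $w' + w^2$ as a distribution on $\mathbb{R}^+$ and $f(\cdot,k)$ solves $-y'' + (w' + w^2) y = k^2 y$ with the right Jost asymptotic at $+\infty$ by Proposition~\ref{pro.f+}, we have $f(x,k) = f_+^{q_0}(x,k)$ for every $x \geq 0$. Analogously $f^{\#}(x,k) = f_-^{q_0}(x,k)$ for every $x \leq 0$, and both agreements hold at $x = 0$.

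The technical core is the Wronskian computation at $x = 0$. Formulas \eqref{eq.a.wronski}--\eqref{eq.b.wronski} employ the modified Wronskian $W_+$ based on $u_+$, while Lemma~\ref{lemma.f-} furnishes the quasi-derivative of $f^{\#}$ with respect to $u_-$; at $x = 0$ these two quasi-derivatives of $f^{\#}$ differ by $v(0)\, f^{\#}(0,k)$, and this correction must be applied before inserting $f^{\#}$ into $W_+$. The resulting expressions for $a_{q_0}(k)$ and $b_{q_0}(k)$ involve the Fourier transforms at $x=0$ of the combinations $\Gamma_{11} \pm \Gamma_{21}$ (from $f$) and $\Gamma^{\#}_{22} \pm \Gamma^{\#}_{12}$ (from $f^{\#}$), together with a term of the form $v(0)/(2ik)$. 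This last term generates precisely the singularity of $a$ at $k = 0$ predicted by Corollary~\ref{cor.pole-of-a} and reflects the $v(0)\delta_0$ contribution to $q_0$ identified in Lemma~\ref{lemma.r1}. A term-by-term comparison should show that these expressions agree with the reconstruction data $a(k) := 1/t(k)$ and $b(k) := r^{\#}(k)\, a(k)$ built from $r$ via \eqref{eq.t.recon} and the involution $\mathcal{I}$ of Proposition~\ref{pro.I}, whence $r_+^{q_0}(k) = -b_{q_0}(-k)/a_{q_0}(k) = r(k)$.

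Should the direct matching prove unwieldy, a Hardy-space argument is available as backup: passing \eqref{eq.GLM.K} at $x = 0$ to Fourier transforms shows that $m(0,-k) + r(k) m(0,k) - 1 \in H_+^2(\mathbb{R})$ with $m(0,k) := f(0,k)$, while Lemma~\ref{lemma.hardy} applied to $q_0$ gives the same relation with $r_+^{q_0}$ in place of $r$, so that $\bigl(r(k) - r_+^{q_0}(k)\bigr)\, m(0,k) \in H_+^2(\mathbb{R})$; combined with the analogous statement for $f^{\#}$ and $r^{\#} = \mathcal{I} r$, the vanishing-lemma argument from the proof of Proposition~\ref{prop.one-to-one} forces $r = r_+^{q_0}$. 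The main obstacle in either route is the clean bookkeeping of the quasi-derivative correction $v(0)\,y(0)$ near $k = 0$: the normalization $r(0) = -1$ and the $1/(2ik)$ singularity in $a$ must conspire to cancel in the ratio $-b_{q_0}(-k)/a_{q_0}(k)$, and tracking this cancellation without introducing spurious constants is the most delicate point.
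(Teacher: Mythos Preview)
Your plan diverges from the paper's in a way that leaves a genuine gap. The paper does \emph{not} first build~$q_0$ from the triple via Lemma~\ref{lemma.r2} and then compute its scattering data. Instead it proves directly (Lemma~\ref{lemma.fsharp}), by a Liouville argument in the spectral variable~$k$, that
\[
f^{\#}(x,k)=a(k)f(x,-k)-b(-k)f(x,k),\qquad f(x,k)=a(k)f^{\#}(x,-k)+b(k)f^{\#}(x,k)
\]
hold for \emph{every} $x\in\mathbb{R}$: one shows that the function $\bigl[f f^{\#}-g g^{\#}\bigr]/a$ (with $g,g^{\#}$ the right-hand sides) equals the odd expression $g^{\#}(-k)f^{\#}(k)-f^{\#}(-k)g^{\#}(k)$ on the real axis, hence extends to a bounded entire function with removable singularity at $k=0$, hence vanishes. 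These relations deliver two conclusions simultaneously. First (Lemma~\ref{lemma.qsharp}), they force $w'+w^2=(w^{\#})'+(w^{\#})^2$ as distributions on all of~$\mathbb{R}$, so $w$ and $w^{\#}$ are Riccati representatives of one and the same potential~$q_0$, and $f=f_+^{q_0}$, $f^{\#}=f_-^{q_0}$ on the whole line. Second, the displayed relations are precisely~\eqref{eq.ab+}--\eqref{eq.ab-}, so $a_{q_0}=a$ and $b_{q_0}=b$ with no further computation.

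Your route bypasses this Liouville step and therefore cannot close. The ``term-by-term comparison'' is infeasible: $a_{q_0}(k)$ computed from the Wronskian at $x=0$ is a bilinear combination of Fourier transforms of $\Gamma_{ij}(0,\cdot)$ and $\Gamma^{\#}_{ij}(0,\cdot)$ plus the $v(0)/(2ik)$ correction, whereas $a(k)=1/t(k)$ from~\eqref{eq.t.recon} is the exponential of a Cauchy transform of $\log\bigl[(1-|r|^2)(s^2+1)/s^2\bigr]$; there is no algebraic identity linking these representations. The backup Hardy-space argument is also incomplete: knowing $(r-r_+^{q_0})\,m(0,\cdot)\in H_+^2(\mathbb{R})$ does not fit the hypotheses of the vanishing lemma, which requires the structure $\rho h+\overline{h}\in H_+^2$ with $h\in H_+^2$ and $|\rho|<1$; neither factor here plays the role of~$h$, and coupling with the $r^{\#}$ relation at $x=0$ does not supply the missing structure. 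Finally, the preliminary claim $(w^{\#}-w)(0)>0$ is not justified by Proposition~\ref{pro.Gamma}(3) (that proof only gives continuity, not a sign), so Lemma~\ref{lemma.r2} cannot even be invoked at the outset; in the paper this positivity falls out \emph{a posteriori}, once $r_+^{q_0}=r\in\mathcal{R}_{>0}$ has been established.
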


Given $r\in\mathcal{R}_{>0}$, we construct the function~$\widetilde t$ via~\eqref{eq.tildet.rep} and set $t(k) = k \widetilde t(k)/(k+i)$. In what follows  we will denote by $a$ and $b$ the functions $1/t$ and $ar^{\#}$. Our aim is to show that $a$ and $b$ so defined coincide with the corresponding
coefficients $a_0$ and~$b_0$ for the potential~$q_0$
reconstructed from the data (\ref{eq.riccati.data}).

The explicit construction formula~\eqref{eq.tildet.rep} yields the following properties of the function~$a$:
\begin{itemize}
\item[(1)] $a$ is analytic in $\mathbb{C}^{+}$ and continuous in $\overline{\mathbb{C}^+}\backslash \{0\}$;
\item[(2)] $a(k)\rightarrow 1$  as $|k| \rightarrow\infty$ in $\overline{\mathbb{C}^{+}}$ and $\lim_{k\to0}2ik\,a(k)=\theta\ne0$, for some $\theta$ uniquely determined by~$r$.
\end{itemize}
Also, we see from~\eqref{eq.tildet.rep} that $t$ satisfies the symmetry relation~$t(-k) = \overline{t(k)}$ and that
\[
    t(-k) = \frac{k}{k-i} \exp \left\{- \mathcal{C}_-
        \log \Bigl[ \bigl(1-\left\vert r(s)\right\vert ^{2}\bigr)\frac{s^2+1}{s^2} \Bigr]\right\}
\]
with $\mathcal{C}_-$ the Riesz projector of~\eqref{eq.Cauchy}. Recalling the relation~$\mathcal{C}_+ - \mathcal{C}_- = I$, we conclude that
\[
    |t(k)|^2 = t(k)t(-k) = 1-|r(k)|^2,
\]
i.e.,
\begin{equation}\label{eq.ab1.recon}
    |a(k)|^2 - |b(k)|^2 = 1.
\end{equation}

Introduce the functions
\begin{align*}
    m(x,k) &:= f(x,k) e^{-ikx},\\
    m^{\#}(x,k) &:= f^{\#}(x,k) e^{ikx};
\end{align*}
then we have (cf.~Lemma~\ref{lemma.hardy}):

\begin{lemma}\label{lemma.r.hardy}
For every fixed $x\in\mathbb{R}$, the functions
\begin{equation}
    m(x,-k)+e^{2ikx}r(k)m(x,k) \label{eq.3}%
\end{equation}
and%
\begin{equation}
    m^{\#}(x,-k)+e^{-2ikx}r^{\#}(k)m^{\#}(x,k) \label{eq.4}%
\end{equation}
admit analytic continuations into $\mathbb{C}^{+}$ as elements of
$H_{+}^{2}(\mathbb{R)}$, which are bounded and continuous in~$\overline{\mathbb{C}^+}$.
\end{lemma}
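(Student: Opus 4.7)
The plan is to Fourier-transform the Gelfand--Levitan--Marchenko equations~\eqref{eq.GLM.K} and \eqref{eq.GLM.Ksharp} in the variable~$\zeta$ and to read all three conclusions (analytic continuation, $H_+^2$-membership up to a constant, and boundedness/continuity on $\overline{\mathbb{C}^+}$) off the support and $L^1\cap L^2$-integrability of the resulting function of~$\zeta$. Fix $x\in\mathbb{R}$ and set
\[
    \Phi(\zeta) := K(x,\zeta) + F(x+\zeta)
        + \int_0^\infty K(x,t)F(x+t+\zeta)\,dt,
\]
so that~\eqref{eq.GLM.K} is exactly the statement that $\Phi\equiv0$ on $(0,\infty)$, i.e., $\supp\Phi\subset(-\infty,0]$.

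I would then compute $\widehat\Phi$ term by term using the convention~\eqref{eq.Fourier}. One verifies easily that $\widehat F(\xi)=r(-\xi)$, that $\widehat{K(x,\cdot)}(\xi)=m(x,\xi)-1$ (since $K(x,\cdot)$ is supported in $[0,\infty)$), and that $\widehat{F(x+\cdot)}(\xi)=e^{-2i\xi x}r(-\xi)$; a Fubini computation then transforms the convolution-type term into $e^{-2i\xi x}r(-\xi)\bigl(m(x,-\xi)-1\bigr)$. Assembling these pieces,
\[
    \widehat\Phi(\xi) = m(x,\xi)-1 + e^{-2i\xi x}\,r(-\xi)\,m(x,-\xi),
\]
and substituting $\xi=-k$ identifies $\widehat\Phi(-k)$ with the function in~\eqref{eq.3} minus the constant~$1$. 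Integrability of $\Phi$ follows from $K(x,\cdot)\in X_0^+$ (Proposition~\ref{pro.Gamma}) and $F\in X$ (since $r\in\mathcal{R}_{>0}\subset\widehat X$): a Fubini estimate bounds the convolution term in both $L^1$ and $L^2$ by $\|K(x,\cdot)\|_{L^1}$ times the corresponding norm of~$F$, so $\Phi\in L^1(\mathbb{R})\cap L^2(\mathbb{R})$. The $L^2$-integrability together with $\supp\Phi\subset(-\infty,0]$ gives $\widehat\Phi\in H_-^2(\mathbb{R})$, while the $L^1$-integrability with the same support yields $|\widehat\Phi(\xi)|\le\|\Phi\|_{L^1}$ for $\xi$ in the closed lower half-plane, with continuity there by dominated convergence. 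Reflecting $\xi\mapsto -k$ transports all of this into the closed upper half-plane and proves every claim for~\eqref{eq.3}.

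The left analogue is strictly parallel: the GLM equation~\eqref{eq.GLM.Ksharp} produces a function $\Phi^{\#}$ supported in $[0,\infty)$, belonging to $L^1\cap L^2$, whose Fourier transform at $\xi=k$ matches the function in~\eqref{eq.4} up to the constant~$-1$ and therefore lies in $H_+^2(\mathbb{R})$; the $L^1$-bound again gives boundedness and continuity on $\overline{\mathbb{C}^+}$. Beyond the sign bookkeeping for the two Fourier conventions~\eqref{eq.F} and~\eqref{eq.Fsharp}, the main point of care is the passage from $H_{\pm}^2$-membership to the pointwise sup-norm bound on the closed half-plane: this is why it is essential to track $L^1$-integrability of~$\Phi$ and~$\Phi^{\#}$ rather than merely their $L^2$-membership. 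A final minor nuisance is that~\eqref{eq.3} and~\eqref{eq.4} themselves are not literally in $H_+^2$ but differ from $H_+^2$-elements by the constant~$1$, which is harmless for the analytic continuation, boundedness, and continuity asserted.
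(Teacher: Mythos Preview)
Your proof is correct and follows essentially the same approach as the paper's own argument: define the extension~$\Phi$ (the paper calls it~$G$) of the left-hand side of the GLM equation to all of~$\mathbb{R}$, observe that it is supported in~$(-\infty,0]$ and lies in~$X=L^1\cap L^2$, and then identify its (inverse) Fourier transform with the expression in~\eqref{eq.3} minus the constant~$1$. Your version is more detailed (you spell out the term-by-term Fourier computation and the separate roles of $L^1$ and $L^2$ for boundedness versus $H_+^2$-membership), but there is no substantive difference in strategy.
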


\begin{proof}
To derive the properties of~(\ref{eq.3}), consider the function
\[
    G(x,y):= K(x,y)+F(x+y)+\int_{0}^{\infty}K(x,s)F(x+s+y)\,ds
\]
on the whole line, where we assume $K(x,y)$ to be continued by zero for $y<0$. The function~$G(x,\,\cdot\,)$ belongs to $X$ and vanishes for $y>0$ by (\ref{eq.GLM.K}). Therefore the inverse Fourier transform $\mathcal{F}^{-1}G$ of~$G$ is an analytic function in $\mathbb{C}^+$ that belongs to $H_{+}^{2}(\mathbb{R)}$, is continuous up to the boundary and bounded in the closed upper-half plane~$\overline{\mathbb{C}^+}$. It remains to observe that $\pi\mathcal{F}^{-1}G$ coincides with~$m(x,-k)+e^{2ikx}r(k)m(x,k)-1$ on the real line.

One obtains properties of~(\ref{eq.4}) similarly by taking the Fourier transform of~(\ref{eq.GLM.Ksharp}).
\end{proof}

Next, we prove:

\begin{lemma}\label{lemma.fsharp}
The following relations hold:%
\begin{align}
    f^{\#}(x,k)  &  =a(k)f(x,-k)-b(-k)f(x,k),\label{eq.5}\\
    f(x,k)  &  =a(k)f^{\#}(x,-k)+b(k)f^{\#}(x,k). \label{eq.6}%
\end{align}
\end{lemma}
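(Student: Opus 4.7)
The plan is to reformulate the identities (\ref{eq.5}) and (\ref{eq.6}) in terms of the modified Jost functions $m(x,k):=e^{-ikx}f(x,k)$ and $m^{\#}(x,k):=e^{ikx}f^{\#}(x,k)$. Dividing (\ref{eq.5}) by $e^{ikx}$ and then by $a(k)=1/t(k)$, and using the relations $b(k)=a(k)r^{\#}(k)$ and $r(k)=-b(-k)/a(k)$ built into our construction, I would see that (\ref{eq.5}) is equivalent to the scalar identity
\begin{equation*}
    t(k)\,m^{\#}(x,k)=m(x,-k)+r(k)e^{2ikx}m(x,k),
\end{equation*}
which I shall label $(\mathrm{A})$, and similarly (\ref{eq.6}) is equivalent to
\begin{equation*}
    t(k)\,m(x,k)=m^{\#}(x,-k)+r^{\#}(k)e^{-2ikx}m^{\#}(x,k),
\end{equation*}
which I shall label $(\mathrm{B})$.

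The core of the argument is a Hardy space analysis. Fourier-transforming the Gelfand--Levitan--Marchenko equations (\ref{eq.GLM.K}) and (\ref{eq.GLM.Ksharp}) in the $\zeta$ variable, exactly as in the proof sketch of Lemma~\ref{lemma.r.hardy}, shows that the right-hand sides of $(\mathrm{A})$ and $(\mathrm{B})$, less the constant $1$, lie in $H^{2}_{+}(\mathbb{R})$. For the left-hand sides, each of $m(x,\cdot)-1$, $m^{\#}(x,\cdot)-1$, and $t-1$ belongs to $H^{2}_{+}(\mathbb{R})$: the first two from their Fourier representations with kernels supported on half-lines, the last via the factorization $t(k)=k\widetilde{t}(k)/(k+i)$ with $\widetilde{t}\in 1\dotplus\widehat{X}$ analytic and nonvanishing in $\mathbb{C}^{+}$ (see Lemma~\ref{lemma.t.recon}). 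Since $H^{2}_{+}$ is stable under multiplication by bounded analytic functions on $\mathbb{C}^{+}$, it follows that $tm^{\#}-1,\ tm-1\in H^{2}_{+}$, and consequently
\begin{equation*}
    D_{A}(k):=t(k)m^{\#}(x,k)-m(x,-k)-r(k)e^{2ikx}m(x,k)\in H^{2}_{+}(\mathbb{R}),
\end{equation*}
and, analogously, $D_{B}\in H^{2}_{+}(\mathbb{R})$.

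To finish I would invoke the Deift--Trubowitz vanishing lemma used in the proof of Proposition~\ref{prop.one-to-one}: since $|r(k)|<1$ for $k\neq 0$, it suffices to check that $r(k)e^{2ikx}D_{A}(k)+\overline{D_{A}(k)}\in H^{2}_{+}(\mathbb{R})$. Using the Schwarz symmetries $\overline{m(x,k)}=m(x,-k)$, $\overline{m^{\#}(x,k)}=m^{\#}(x,-k)$, $\overline{t(k)}=t(-k)$, the involution identity $r(k)=-[t(k)/t(-k)]r^{\#}(-k)$ provided by Proposition~\ref{pro.I}, and the multiplicative relation $t(k)t(-k)=1-r(k)r(-k)$ obtained from (\ref{eq.ab1.recon}), the combination $re^{2ikx}D_{A}+\overline{D_{A}}$ can be rearranged—after the $m(x,-k)$ terms are combined via the $H^{2}_{-}$-version of the GLM Hardy identity applied to $m^{\#}$—into an expression manifestly lying in $H^{2}_{+}$. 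The vanishing lemma then forces $D_{A}\equiv 0$, which is $(\mathrm{A})$; an entirely symmetric argument yields $D_{B}\equiv 0$ and hence $(\mathrm{B})$.

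\textbf{Main obstacle.} The delicate step is the final algebraic rearrangement demonstrating that $re^{2ikx}D_{A}+\overline{D_{A}}\in H^{2}_{+}$. Carrying this out cleanly requires the simultaneous use of both GLM-derived Hardy identities (one in $H^{2}_{+}$, the other in $H^{2}_{-}$ via $k\mapsto -k$) together with the multiplicative interplay of $r,\,r^{\#},\,t$ dictated by the involution $\mathcal{I}$ and the unitarity $|t|^{2}+|r|^{2}=1$. Conceptually this expresses the self-consistency of the candidate Riemann--Hilbert data $(r,r^{\#},t)$ produced from a single $r\in\mathcal{R}_{>0}$, and it is the place where the deeper content of the lemma resides.
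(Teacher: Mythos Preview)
Your route is genuinely different from the paper's. You aim to apply the Deift--Trubowitz vanishing lemma directly to the discrepancy
\[
    D_A(k)=t(k)\,m^{\#}(x,k)-m(x,-k)-r(k)e^{2ikx}m(x,k),
\]
whereas the paper never forms this difference. Instead it sets $g^{\#}(x,k):=a(k)f(x,-k)-b(-k)f(x,k)$ and $g(x,k):=a(k)f^{\#}(x,-k)+b(k)f^{\#}(x,k)$, uses Lemma~\ref{lemma.r.hardy} to see that $e^{ikx}g^{\#}$ and $e^{-ikx}g$ extend boundedly and holomorphically to~$\mathbb{C}^+$, and then runs two Liouville arguments. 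First, the algebraic identity
\[
    \frac{f\,f^{\#}-g\,g^{\#}}{a}=g^{\#}(x,-k)f^{\#}(x,k)-f^{\#}(x,-k)g^{\#}(x,k)
\]
has an odd right-hand side and a left-hand side holomorphic in $\mathbb{C}^+$; reflecting across $\mathbb{R}$ gives an entire bounded function, hence zero. This yields $g^{\#}/f^{\#}(x,k)=g^{\#}/f^{\#}(x,-k)$; a second Liouville step (one side analytic in $\mathbb{C}^+$, the other in $\mathbb{C}^-$, limit $1$ at infinity) forces $g^{\#}=f^{\#}$.

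The problem with your proposal is the step you yourself flag as the obstacle. To invoke the vanishing lemma you must show that $r(k)e^{2ikx}D_A(k)+\overline{D_A(k)}\in H^2_+(\mathbb{R})$, and you only assert that this ``can be rearranged'' into something manifestly in $H^2_+$. If you expand this combination you get
\[
    \rho\,t\,m^{\#}(x,k)+t(-k)m^{\#}(x,-k)-\bigl[\rho(k)+\rho(-k)\bigr]m(x,-k)-\bigl[1+\rho(k)^2\bigr]m(x,k),
\]
with $\rho(k)=r(k)e^{2ikx}$, and none of the available identities (the two GLM Hardy relations, the involution $r^{\#}=-\tfrac{t}{t(-\cdot)}r(-\cdot)$, or $t(k)t(-k)=1-r(k)r(-k)$) collapse this to an $H^2_+$ expression in any direct way; in particular, your suggestion to ``combine the $m(x,-k)$ terms via the $H^2_-$-version of the GLM identity applied to $m^{\#}$'' mixes up which function the identity controls. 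Unlike the uniqueness proof (Proposition~\ref{prop.one-to-one}), where both terms in the difference obey the \emph{same} Hardy relation and subtraction is automatic, here $t\,m^{\#}$ and $m(x,-k)+\rho\,m(x,k)$ satisfy \emph{different} relations (one tied to $r^{\#}$, the other to $r$), and the link between them is precisely the content of the lemma. So the step is not a routine bookkeeping exercise; as written it is a gap. The paper's Liouville argument avoids this entirely by exploiting the odd symmetry of the bilinear combination above, which gives the analytic continuation across $\mathbb{R}$ for free.
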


\begin{proof}
Denote by $g^{\#}(x,k)$ (resp. $g(x,k)$) the right-hand side of (\ref{eq.5})
(resp. (\ref{eq.6})). The relation
\[
    g^{\#}(x,k) = e^{-ikx}a(k) \bigl[m(x,-k)+ e^{2ikx}r(k) m(x,k)\bigr]
\]
in view of Lemma~\ref{lemma.r.hardy} shows that $e^{ikx}g^{\#}(x,k)$ admits analytic continuation to the open upper-half plane~$\mathbb{C}^+$ that is continuous on~$\overline{\mathbb{C}^+} \setminus\{0\}$ and bounded there outside every neighbourhood of the origin. Since the expression in the square brackets above vanishes at $k=0$, we conclude that $g^{\#}(x,k)= o (1/k)$ as $k \to0$ within $\overline{\mathbb{C}^+} \setminus\{0\}$. Similar arguments show that the function~$e^{-ikx}g(x,k)$ enjoys the same analyticity and continuity properties for $k\in\mathbb{C}^+$.

Solving the system%
\begin{align*}
    g^{\#}(x,k)  &  =a(k)f(x,-k)-b(-k)f(x,k),\\
    g^{\#}(x,-k)  &  =a(-k)f(x,k)-b(k)f(x,-k)
\end{align*}
for $f(x,k)$ on account of~(\ref{eq.ab1.recon}) gives
\begin{equation}
    f(x,k)=a(k)g^{\#}(x,-k)+b(k)g^{\#}(x,k). \label{eq.theid}%
\end{equation}
From (\ref{eq.theid}) and the definition of $g$, we compute%
\begin{equation}
    \frac{f(x,k)f^{\#}(x,k)-g(x,k)g^{\#}(x,k)}{a(k)}
        =g^{\#}(x,-k)f^{\#}(x,k)-f^{\#}(x,-k)g^{\#}(x,k). \label{eq.7}%
\end{equation}
The right-hand side of (\ref{eq.7}) is an odd function of $k\in\mathbb{R}$,
while the left-hand side has an analytic extension to the upper complex
half-plane that is continuous up to~$\mathbb{R}\setminus\{0\}$. Hence we can extend the left-hand side of~\eqref{eq.7} to an analytic function~$h$ on $\mathbb{C}\setminus\{0\}$. Since the right-hand side of~\eqref{eq.7} is~$o(1/k)$ as $k\to0$, $k=0$ is a removable singularity of $h$ and thus $h$ is an entire function. Note that $h$ is bounded in $\mathbb{C}^+$ because such are the functions $f(x,\,\cdot\,)f^{\#}(x,\,\cdot\,)$, $g(x,\,\cdot\,)g^{\#}(x,\,\cdot\,)$, and $1/a$. Since $h$ is odd, it is bounded in~$\mathbb{C}$ and thus a constant, which must be zero.

We thus conclude that%
\[
    g^{\#}(x,-k)f^{\#}(x,k)=f^{\#}(x,-k)g^{\#}(x,k),
\]
and since the Jost solution $f^{\#}(x,k)$ never vanishes for real~$x$ and real nonzero~$k$, we get
\begin{equation}
    \frac{g^{\#}(x,-k)}{f^{\#}(x,-k)}=\frac{g^{\#}(x,k)}{f^{\#}(x,k)}.
\label{eq.10}%
\end{equation}
The left-hand side defines a function analytic and bounded in $\mathbb{C}^{-}$, while the right-hand side defines a function analytic and bounded in $\mathbb{C}^{+}$. Thus both sides give a function that is analytic in $\mathbb{C}\setminus\mathbb{R}$ and continuous up to $\mathbb{R}\setminus\{0\}$. Arguing as above, we conclude that this function is analytic in~$\mathbb{C}\setminus\{0\}$ and has a removable singularity at $k=0$. We thus get a bounded entire function, which must be constant. Since both $g^{\#}(x,k)$ and $f^{\#}(x,k)$ tend to~$1$ when $k$ tends to~$\infty$ along the real line, this constant is~$1$, and thus
\[
    g^{\#}(x,k)=f^{\#}(x,k)
\]
as claimed, so (\ref{eq.5}) holds.

A similar proof shows that $g(x,k)=f(x,k)$ so that (\ref{eq.6}) holds.
\end{proof}

Set now $q:= w' + w^2$ and $q^{\#}:= (w^{\#})' + (w^{\#})^2$; then $q$ and $q^{\#}$ are distributions in $H^{-1}_{\mathrm{loc}}(\mathbb{R})$. The crucial result is given by the following lemma.

\begin{lemma}\label{lemma.qsharp}
$q$ and $q^{\#}$ coincide as distributions in~$H^{-1}_{\mathrm{loc}}(\mathbb{R})$.
\end{lemma}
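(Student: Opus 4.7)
The plan is to use the scattering relation from Lemma~\ref{lemma.fsharp} to show that for each real $k\neq 0$ the left Jost solution $f^{\#}(\cdot,k)$ satisfies the Schr\"odinger equation with \emph{both} potentials $q$ and $q^{\#}$; subtract the two equations to get $(q-q^{\#})\,f^{\#}(\cdot,k)=0$ in $\mathcal{D}'(\mathbb{R})$; and then divide out the non-vanishing continuous function $f^{\#}(\cdot,k)$ to conclude $q=q^{\#}$.

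First, by Proposition~\ref{pro.f+} the functions $f(\cdot,\pm k)$ satisfy $-y''+qy=k^2y$ distributionally, so by the scattering relation~\eqref{eq.5} so does their linear combination $f^{\#}(\cdot,k)$. On the other hand, by Proposition~\ref{pro.f-}, $f^{\#}(\cdot,k)$ is a distributional solution of $-y''+q^{\#}y=k^2y$. Subtracting yields
\[
    (q-q^{\#})\,f^{\#}(\cdot,k) = 0 \quad\text{in }\mathcal{D}'(\mathbb{R}).
\]
Second, I argue that $f^{\#}(\cdot,k)$ does not vanish on $\mathbb{R}$ for any real $k\neq 0$. Indeed, the representation formulas analogous to~\eqref{eq.f-.rep}--\eqref{eq.f-.quasi.rep} for $f^{\#}$ yield, in the same way as in Lemma~\ref{lemma.f-}, that the modified Wronskian (taken with $u=w^{\#}$) satisfies $W_{-}\{f^{\#}(\cdot,k),f^{\#}(\cdot,-k)\}=2ik$. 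Since the kernel $K^{\#}$ is real-valued, we have $\overline{f^{\#}(\cdot,k)}=f^{\#}(\cdot,-k)$ for real $k$; hence a zero $x_{0}$ of $f^{\#}(\cdot,k)$ would force both terms in~\eqref{eq.W} to vanish at $x_{0}$, contradicting $W_{-}=2ik\neq 0$.

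Third, I deduce $q=q^{\#}$ from the two facts above. Fix any real $k\neq 0$ and write $f^{\#}(\cdot,k)=\alpha+i\beta$ with real-valued $\alpha,\beta\in H^{1}_{\mathrm{loc}}(\mathbb{R})$, so that $\alpha^{2}+\beta^{2}=|f^{\#}(\cdot,k)|^{2}$ is a strictly positive continuous function. The vanishing of $(q-q^{\#})f^{\#}(\cdot,k)$ in $\mathcal{D}'(\mathbb{R})$ splits into the two real identities
\[
    (q-q^{\#})\alpha = 0, \qquad (q-q^{\#})\beta = 0 \quad\text{in }\mathcal{D}'(\mathbb{R}).
\]
Given an arbitrary real $\varphi\in C_{0}^{\infty}(\mathbb{R})$ with support in $[a,b]$, the function $\rho:=\varphi/(\alpha^{2}+\beta^{2})$ lies in $H^{1}(\mathbb{R})$ with compact support in $[a,b]$, because $\alpha^{2}+\beta^{2}$ is in $H^{1}((a,b))$ and bounded below on $[a,b]$. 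Decompose
\[
    \varphi = \alpha\cdot(\alpha\rho) + \beta\cdot(\beta\rho),
\]
with $\alpha\rho,\beta\rho\in H^{1}(\mathbb{R})$ compactly supported. Using the one-dimensional fact that $H^{1}_{\mathrm{loc}}$ acts as bounded multipliers on compactly supported $H^{1}$-functions, the relations $(q-q^{\#})\alpha=0$ and $(q-q^{\#})\beta=0$ extend by density from $C_{0}^{\infty}$ to such test functions, which gives $\langle q-q^{\#},\varphi\rangle=0$. Since $\varphi$ was arbitrary, $q=q^{\#}$ as distributions.

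The main obstacle I expect is the third step: making the "division" by the merely $H^{1}_{\mathrm{loc}}$ (and complex-valued) function $f^{\#}(\cdot,k)$ rigorous inside the $H^{-1}$--$H^{1}$ duality pairing. Once one exploits the one-dimensional algebraic structure (that $H^{1}$-functions bounded away from zero admit $H^{1}$ reciprocals and act as multipliers on $H^{1}_{\mathrm{comp}}$), the argument is routine; the preceding two ingredients---transporting the Schr\"odinger equation across the scattering relation and non-vanishing of $f^{\#}(\cdot,k)$---follow immediately from Lemma~\ref{lemma.fsharp} and the Wronskian identity inherent in the Jost representation.
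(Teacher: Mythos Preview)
Your proof is correct and follows the same overall strategy as the paper: use Lemma~\ref{lemma.fsharp} to see that one Jost solution satisfies both Schr\"odinger equations, subtract to get $(q-q^{\#})$ times a Jost solution equal to zero distributionally, and then divide out the nonvanishing factor. The only substantive difference lies in how the division is handled. The paper works with the \emph{right} Jost solution $f$ and, instead of invoking the Wronskian, appeals to Lemma~\ref{lemma.jost} to get $f(\cdot,0)>0$ everywhere; by analytic dependence on~$k$ it then picks, for each fixed $x_0$, a real nonzero $k_0$ with $f(x_0,k_0)>0$, so that locally near $x_0$ the divisor is a strictly positive \emph{real} function and the division is immediate. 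Your route---the Wronskian identity $W_-\{f^{\#}(\cdot,k),f^{\#}(\cdot,-k)\}=2ik$ to get nonvanishing of $f^{\#}(\cdot,k)$ for a single fixed real $k\neq0$, followed by the $\alpha^2+\beta^2$ trick to divide by a complex $H^1_{\mathrm{loc}}$ function---is equally valid and has the mild advantage of working globally with one $k$, at the cost of spelling out the $H^1$-multiplier argument you flag in your third step.
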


\begin{proof}
For every real nonzero~$k$, the functions $f(\,\cdot\,,k)$ and $f(\,\cdot\,,-k)$ are linearly independent solutions of the equation
\[
    -y'' + q y = k^2 y,
\]
while
$f^{\#}(\,\cdot\,,k)$ and $f^{\#}(\,\cdot\,,-k)$ are linearly independent solutions of the equation
\begin{equation}\label{eq.qsharp}
    -y'' + q^{\#} y = k^2 y.
\end{equation}
Lemma~\ref{lemma.fsharp} shows that the function $f(\,\cdot\,,k)$ also solves equation~\eqref{eq.qsharp} and thus we get the equality
\[
    (q-q^{\#}) f(\,\cdot\,,k) =0
\]
in the distributional sense for all real nonzero~$k$. We recall that, in virtue of Lemma~\ref{lemma.jost}, the Jost solution~$f(\,\cdot\,,0)$ is everywhere positive on~$\mathbb{R}$. In view of the analytic dependence on~$k$, for every $x_0\in\mathbb{R}$ there exists a real nonzero $k_0$ such that $f(x_0,k_0)>0$. Therefore $f(x,k_0)>0$ for all $x$ in some neighbourhood of~$x_0$, whence $q$ and $q^{\#}$ coincide as distributions in~$H^{-1}_{\mathrm{loc}}(\mathbb{R})$ in this neighbourhood. Since $x_0$ was arbitrary, we claim follows.
\end{proof}

The above lemma implies that~$w$ and~$w^{\#}$ are right and left Riccati representatives of a distribution $q_0 \in \mathcal{Q}_{>0}$ that in view of Lemma~\ref{lemma.r1} can be associated to the triple
$\left(  w,w^{\#},(w^{\#}-w)(0)\right)$. By Propositions~\ref{pro.f+} and \ref{pro.f-}, the Jost solutions $f_{\pm}(\,\cdot\,,k)$ for the potential~$q_0$ satisfy for all $k\in\mathbb{R}$ the equalities
\[
    f_{+}(\,\cdot\,,k)    =f(\,\cdot\,,k),
        \qquad
    f_{-}(\,\cdot\,,k)    =f^{\#}(\,\cdot\,,k).
\]
Lemma~\ref{lemma.fsharp} gives, for all real $x$ and $k$, the following relations:
\begin{align*}
    f_-(x,k)   &  =a(k)f_+(x,-k) - b(-k)f_+(x,k),\\
    f_+(x,k)  &  =a(k)f_-(x,-k)+b(k)f_-(x,k).
\end{align*}
It now follows that ${q}_0$ has reflection coefficients $r_{+}=r$ and $r_{-}=r^{\#}$ and thus is indeed the potential in~$\mathcal{Q}_{>0}$ looked for. This completes the reconstruction procedure and proves Theorem~\ref{thm.right}.

\medskip

\emph{Acknowledgements.}
This material is based upon work supported by the National Science Foundation under Grant DMS-0710477 (RH and PP) and by the Deutsche Forschungsgemeinschaft under project 436 UKR 113/84 (RH and YM). RH acknowledges support from the College of Arts and Sciences at the University of Kentucky and thanks the Department of Mathematics at the University of Kentucky for hospitality during his stay there. RH, YM, and PP thank the Institut f\"ur angewandte Mathematik der Universit\"at Bonn for hospitality during part of the time that this work was done. PP thanks Percy Deift for helpful conversations and SFB 611 for financial support of his research visit to Universit\"at Bonn. The authors thank Iryna Egorova for bringing to their attention the paper~\cite{Guseinov:1985}.

\end{document}